\newcommand{\lip}[3]{\left(#1,#2\right)_{#3}}
\newcommand{\sj}[1]{\left[ #1 \right]}
\newcommand{\sa}[1]{\left\{ #1 \right\}}
\newcommand{\tj}[1]{\lsem #1 \rsem}
\newcommand{\dspace}[1]{S_{n}^{h,#1}}%(\Omega \times I_n)}
\newcommand{\dspaceT}[1]{V^{h,#1}}%(\Omega \times [0,T])}
\newcommand{\Tspace}{S_{n,\text{Trefftz}}^{h,p}}
\newcommand{\TspaceT}{V_{\text{Trefftz}}^{h,p}}
\newcommand{\dgnorm}[1]{|\kern-.25mm |\kern-.25mm|{#1}| \kern -.25mm | \kern -.25mm |}
\newcommand{\dgnorml}[1]{|\kern-.25mm |\kern-.25mm|{#1}| \kern -.25mm | \kern -.25mm |_{\star}}
\newtheorem{remark}[theorem]{Remark}
\newtheorem{assumption}[theorem]{Assumption}
\newtheorem{example}{Example}
\newcommand{\ltwo}[2]{\|{#1}\|_{#2}}
\newcommand{\nv}{\mathbf{n}}
\newcommand{\ud}{\,\mathrm{d}}
\newcommand{\ha}{\frac{1}{2}}
\newcommand{\babla}{\widetilde{\nabla}}
\newcommand{\hGamman}{\hat{\Gamma}_n}
\newcommand{\binit}{b^{\text{init}}}
\newcommand{\solnspace}{\mathcal{X}}
\DeclareMathOperator{\diam}{diam}
\newcommand{\reply}[1]{{#1}} 
\newcommand{\our}[1]{{#1}}
\title{A Trefftz polynomial \\ space-time discontinuous Galerkin method \\ for the second order wave equation}
\author{Lehel Banjai\thanks{Maxwell Institute for Mathematical Sciences, School of Mathematical \& Computer Sciences, Heriot-Watt University, Edinburgh EH14 4AS, UK.
({\tt l.banjai@hw.ac.uk})} 
\and Emmanuil H.~Georgoulis\thanks{
Department of  Mathematics,
University of Leicester,
University Road,
Leicester , LE1 7RH,
United Kingdom, 
and Department of Mathematics, School of Applied Mathematical and Physical Sciences, National Technical University of Athens, Zografou 15780, Greece. ({\tt Emmanuil.Georgoulis@le.ac.uk})} \and Oluwaseun Lijoka\thanks{School of Mathematical \& Computer Sciences, Heriot-Watt University, Edinburgh EH14 4AS, UK.
({\tt ofl1@hw.ac.uk})}
}
\begin{document}
%\slugger{sinum}{xxxx}{xx}{x}{x--x}
\maketitle

\begin{abstract}
A new space-time discontinuous Galerkin (dG) method\footnote{\small A brief report on the results of this paper was published in the proceedings paper \cite{dgwave_waves}.} utilising special Trefftz polynomial basis functions is proposed and fully analysed for the scalar wave equation in a second order formulation. The dG method considered is motivated by the class of interior penalty dG methods, as well as by the classical work of Hughes and Hulbert \cite{HulH88,HulH90}. The choice of the penalty terms included in the bilinear form is essential for both the theoretical analysis and for the practical behaviour of the method for the case of lowest order basis functions. A best approximation result is proven for this new space-time dG method with Trefftz-type basis functions. Rates of convergence are proved in any dimension and verified numerically in spatial dimensions $d = 1$ and $d = 2$.  Numerical experiments highlight the effectivness of the Trefftz method in problems with energy at high frequencies.
\end{abstract}

\section{Introduction}
The use of non-polynomial basis functions in the context of finite element methods dates back at least to the early 1980s \cite{MR806382}.
In recent years, there has been much interest in using non-polynomial basis functions to discretize wave propagation problems in the frequency domain \cite{CesD98,MoiHP}.
A prominent example is the use of plane wave bases  to solve the Helmholtz equation at high frequencies. The motivation behind this approach is to reduce the number of degrees of freedom per wavelength required to obtain accurate results. Thus obtained Trefftz methods have proved very successful in practice, hence  it is a natural question to ask whether they can be extended and whether they can be equally successful in the time-domain. 

The most natural way of including space-time Trefftz basis functions is within the confines of a space-time discontinuous Galerkin (dG) method. 
In this work, we construct and analyse a new space-time interior penalty dG method for the second order wave equation, that can utilize Trefftz polynomials as local basis functions. The method discretizes the wave equation in primal form and is defined using space-time slabs to ensure solvability on each time-step, as well as to aid the presentation and the analysis. However, with minor modifications, completely unstructured space-time meshes are, in principle, possible in the proposed space-time dG framework. This results in a stable, dissipative scheme for general polynomial bases. For Trefftz basis, we prove quasi-optimality in the dG energy norm, which we show to be an upper bound for a standard space-time energy norm. Numerical results in the dG norm show that the \reply{theoretical estimates of the convergence order are optimal}. Furthermore, the numerical results show that higher order schemes have excellent energy conservation properties and work well for systems with energy at high frequencies. Comparison with standard polynomial bases show that the same convergence order and approximation properties is obtained with considerably fewer degrees of freedom. Furthermore, the implementation is less expensive due to integration being restricted to the space-time skeleton.

% The numerical solution of time-domain wave problems posses a number of interesting challenges, especially in the context of high-order discretisations in both space and in time. 

% Typically, finite element methods for the linear (and some spatially nonlinear) wave problems are based on finite element discretisations of the spatial variables complemented with standard time-stepping schemes, the most popular of which are explicit, such as the leap-frog scheme, due to the acceptable CFL restrictions. 

Space-time variational methods for wave-type problems have appeared in the late 1980's with the works of Hughes and Hulbert \cite{HulH88,HulH90} and Johnson \cite{Joh93}.
First numerical experiments with  Trefftz space-time dG methods were performed in \cite{PetFT}. Currently there is significant activity on the topic \cite{Egger1,KreMPS,Egger2,moiola_proc,KreSTW}.  \reply{ In particular in \cite{Egger1,KreMPS} a Trefftz space-time local dG method for the Maxwell equations, written as a first order system, resulting in a two-field formulation, has been analysed. The dG method designed and analyzed in the present work involves interior-penalty-type numerical fluxes in space, resulting in one-field approximation in space, as well as a one-field dG method in time, with very similar computational stencil widths compared to the local dG method introduced in \cite{KreMPS}.}

%The main differences between the two works are different formulations resulting in differeing analysis and our use of a second order form of the wave equation, that doesn't requiring the use of scalar unknowns.

Typically,  finite element methods for linear  (and some spatially nonlinear) wave problems are based on a continuous or discontinuous finite element discretisation of the spatial variables complemented with standard time-stepping schemes, the most popular of which are explicit, such as the leap-frog scheme, due to the acceptable CFL restrictions. Though even the low order methods can be conservative, for acceptable accuracy when energies at high frequencies are excited, higher order methods are  essential \cite{CohBook,AinMM,AguDE}. Compared to these methods, introduction of higher order approximations is much more straightforward in the context of space-time dG methods. \reply{Moreover, space-time dG methods, such as the one presented below, do \emph{not} require any CFL-type restrictions, owing to their implicit time-stepping interpretation.}  If a general space-time mesh can be used, a judicious choice of the mesh can result in a quasi-explicit method where only small local systems need to be solved \cite{FalR,MonR}. %In conclusion we expect that the space-time Trefftz methods can be competitive 

The remainder of this work is structured as follows. In the next section, we introduce the model problem. In Sections~\ref{sec:space} and \ref{sec:method} we construct the space-time interior penalty method and we show its stability. We proceed in Section~\ref{sec:Tspace} to analyse polynomial Trefftz spaces and prove quasi-optimality. Finally in Section~\ref{apriori}, we prove convergence rates for the $d$-dimensional method in space, $d=1,2,3$; moreover, we also provide $hp$-version a priori bounds for $d=1$. A series of numerical experiments in Section \ref{sec:numerics}, illustrates the theoretical findings and highlights the good performance of the proposed method in practice.

\section{Model problem}\label{sec:model}

We consider the wave equation
\begin{equation}
  \label{eq:wave}  
\begin{aligned}
 \ddot u -\nabla \cdot \left(a\, \nabla u\right) &= 0 & &\text{in } \Omega \times [0,T], \\
 u &= 0 & &\text{on } \partial\Omega \times [0,T],\\
 u(x,0) = u_0(x), \;  \dot u(x,0) &= v_0(x), & &\text{in } \Omega,
\end{aligned}
\end{equation}
where $\Omega$ is a bounded Lipschitz domain in $\mathbb{R}^d$,  $\partial\Omega$ its boundary and $0 < c_a < a(x) < C_a$ a  piecewise constant function. If $\Omega_j$ and $\Omega_k$ are two subsets of $\Omega$ with the boundary $\Gamma_{jk}$ separating them and with $a \equiv a_k$ in $\Omega_k$ and $a \equiv a_j$ in $\Omega_j$, then if we denote by $u_j = u|_{\Omega_j}$ and $u_k = u|_{\Omega_k}$ we further have the transmission conditions
 \begin{equation}
   \label{eq:transmission}
u_j = u_k, \quad 
a_j\partial_{\mathbf{n}} u_j = a_k\partial_{\mathbf{n}} u_k, \qquad \text{ on } \Gamma_{jk},   
 \end{equation}
where $\mathbf{n}$ is the exterior normal to $\Omega_j$ (or $\Omega_k$).

We denote by $L^p(\Sigma)$, $1\le p\le +\infty$, the standard Lebesgue spaces, $\Sigma\subset\mathbb{R}^d$, $d \in \{ 2,3,4\}$, with corresponding norms $\|\cdot\|_{p,\Sigma}$; the norm of $L^2(\Sigma)$ will be denoted by $\ltwo{\cdot}{\Sigma}$. Further, $( \cdot,\cdot )_{\Sigma}$ denotes the standard $L^2$-inner product on $\Sigma$; when the arguments are vectors of $L^2$-functions, the $L^2$-inner product is modified in the standard fashion. 
We denote by $H^s(\Sigma)$ the standard Hilbertian Sobolev space of index $s\in\mathbb{R}$ of real-valued functions defined on $\Sigma\subset\mathbb{R}^d$; in particular $H^1_0(\Sigma)$ signifies the space of functions in $H^1(\Sigma)$ whose traces onto the boundary $\partial\Sigma$ vanish.  For $1\le p\le +\infty$, we denote the standard Bochner spaces by $L^p(0,T;X)$, with $X$ being a Banach space with norm $\|\cdot\|_X$.
%, consisting of all measurable functions $v: [0,T]\to X$, for which
%\beaa
%\|v\|_{L^p(0,T;X)}&:=&\Big(\int_0^T \|v(t)\|_{X}^p\ud t\Big)^{1/p}<+\infty,\quad \text{for}\quad 1\le p< +\infty, \\
%\|v\|_{L^{\infty}(0,T;X)}&:=&\ess\sup_{0\le t\le T}\|v(t)\|_{X}<+\infty,\quad \text{for}\quad  p = +\infty.
%\eeaa
Finally, we denote by $C(0,T;X)$ the space of continuous functions $v: [0,T]\to X$ with norm $\|v\|_{C(0,T;X)}:=\max_{0\le t\le T}\|v(t)\|_{X}<+\infty$.

Let $u_0 \in H^1_0(\Omega)$ and $v_0 \in L^2(\Omega)$, then \eqref{eq:wave} has a unique (weak) solution $u$  with 
\begin{equation}
  \label{eq:smoothness}
u \in L^2([0,T]; H^1_0(\Omega)), \quad  
\dot u \in L^2([0,T]; L^2(\Omega)), \quad  
\ddot u \in L^2([0,T]; H^{-1}(\Omega)), 
\end{equation}
see \cite[Theorem 8.1]{LioMI}.  Furthermore, see \cite[Theorem 8.2]{LioMI}, the solution is continuous in time with
\begin{equation}
  \label{eq:smoothness1}
u \in C([0,T]; H^1_0(\Omega)), \quad  
\dot u \in C([0,T]; L^2(\Omega)).
\end{equation}
\our{We denote the space of all solutions by
\begin{equation}
  \label{eq:soln_space}
  \solnspace = \left\{ u \;|\; u \text{ weak solution of \eqref{eq:wave}  with } u_0 \in H^1_0(\Omega), v_0 \in L^2(\Omega)\right\}.
\end{equation}}

\section{Space-time finite element space}\label{sec:space}
We aim to discretize this problem by a new space-time interior penalty discontinuous Galerkin method. In principle, this could be done on a general space-time mesh, however for the simplicity of presentation (and implementation) we construct a time discretization $0 = t_0 < t_1 < \dots<t_n\dots < t_N = T$ and locally quasi-uniform spatial-meshes $\mathcal{T}_n$ of $\Omega$ consisting of open $d$-dimensional simplices or $d$-box-type elements such that $\reply{\overline\Omega} = \cup_{K\in  \mathcal{T}_n} \overline K$, \reply{ with $K \cap \widetilde{K} = \emptyset$ for $K, \widetilde{K} \in \mathcal{T}_n$ and $K \neq \widetilde{K}$}. Therefore the space-time mesh consists of time-slabs  $\mathcal{T}_n \times I_n$, where $I_n = (t_n, t_{n+1})$ \our{and $\tau_n:=t_{n+1}-t_n$}.  %When clear from the context or of no influence, the subscript $n$ will be omitted. 

The discrete space-time mesh will consist of piecewise polynomials on each time-slab, given by the local space-time finite element space:
\[
\dspace{p} := \left\{u \in L^2(\Omega \times I_n)\;:\; u|_{K \times I_n} \in \mathcal{P}_{p}(\mathbb{R}^{d+1}),\; K \in \mathcal{T}_n\right\},
\]
where $\mathcal{P}_{p}$ is the space of polynomials of total degree $p$; the complete space-time finite element space on $\Omega \times [0,T]$, will be denoted by
\[
\dspaceT{p} := \{ u \in L^2(\Omega \times [0,T]) \;:\; u|_{\Omega \times I_n} \in \dspace{p}, \; n  =0,1,\dots, N-1\}.
\]

We require some notation. The {\em skeleton} of the mesh is defined by $\Gamma_n := \cup_{K \in \mathcal{T}_n} \partial K$ and the interior skeleton by $\Gamma_n^{\text{int}} = \Gamma_n \setminus \partial\Omega$. Moreover, we define the union of two skeletons of two subsequent meshes by $\hGamman:=\Gamma_{n-1} \cup\Gamma_n$; see Figure~\ref{figure:mesh}.

\begin{figure}[htbp]
 \centering
\scalebox{0.6}{
\input{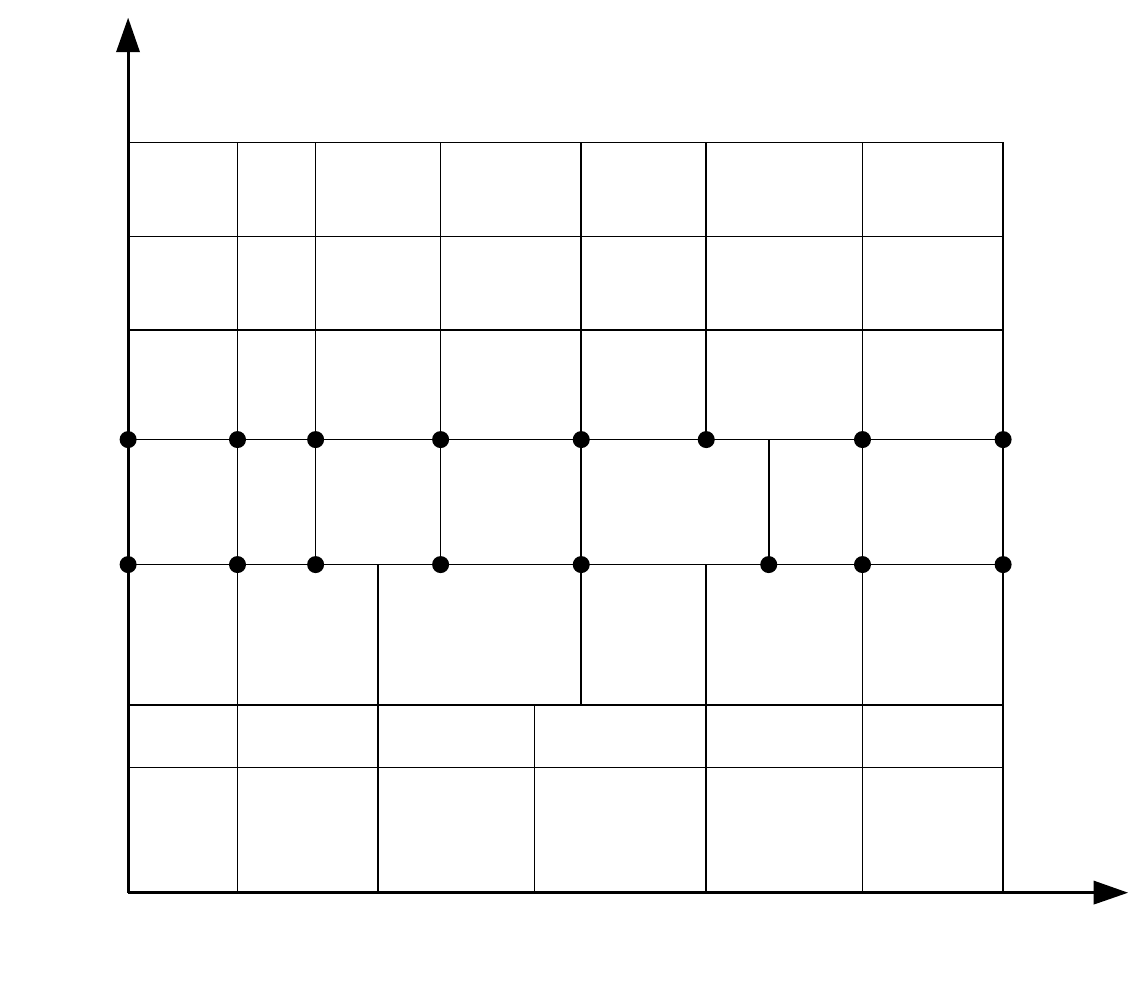_t}
}
 \caption{\small An example of a space-time mesh. Skeletons $\Gamma_n$ and $\Gamma_{n-1}$ are highlighted by black dots. The union of the two we denote by $\hGamman$.}
 \label{figure:mesh}
\end{figure}

\begin{remark}
\reply{The numerical method presented below naturally admits variable spatial meshes from one space-time slab to the next. This flexibility will be crucial for future developments in the context of space-time adaptive dG methods, aiming to use local spatio-temporal resolution to resolve sharp moving fronts. If the spatial mesh remains fixed, all the formulas hold with $\Gamma = \Gamma_n = \hGamman$ for all $n$.}
\end{remark}

  Let $K^+$ and $K^-$ be two spatial elements sharing a face $e = \bar{K}^+\cap \bar{K}^- \reply{\subset} \Gamma_n^{\text{int}}$, with respective outward normal vectors $\nv^+$ and $\nv^-$ on $e$. For $u : \Omega \rightarrow \mathbb{R}$ and $\mathbf{v} : \Omega \rightarrow \mathbb{R}^d$,  let $u^{\pm} : e \rightarrow \mathbb{R}$ and $\mathbf{v}^\pm : e \rightarrow \mathbb{R}^d$ be the traces on $e$ with limits taken from $K^\pm$. We define the respective jumps and averages across each face $e\in \Gamma_n^{\text{int}}$ by
\[
\begin{aligned}
  \sa{u}|_e &= \tfrac12(u^++u^-), & \sa{\mathbf{v}}|_e &= \tfrac12(\mathbf{v}^++\mathbf{v}^-),\\
  \sj{u}|_e &= u^+\nv^++u^-\nv^-, & \sj{\mathbf{v}}|_e &= \mathbf{v}^+\cdot\nv^++\mathbf{v}^-\cdot\nv^-;
\end{aligned}
\]
if $e \reply{\subset  \overline{K^+} }\cap \partial\Omega$, we set $\sa{\mathbf{v}}|_e = \mathbf{v}^+$ and $\sj{u}|_e = u^+\nv^+$. Further, we define the temporal jump by
\[
\tj{u(t_n)} = u(t_n^+)-u(t_n^-), \qquad \tj{u(t_0)} = u(t_0^+).
\]
We will denote the spatial meshsize by $h : \Omega \times [0,T] \rightarrow \mathbb{R}$, defined by $h(x,t) = \operatorname{diam}(K)$ if $x \in K$ for $K \in {\mathcal T}_n$ and $t \in I_n$; when $x\in e = \bar{K}^+\cap \bar{K}^- $, we set $h(x,t):= \sa{h}$ to be the average. Finally we assume that there exists $c_{\mathcal T} > 0$ such that
\begin{equation}
  \label{eq:mesh_reg}
\diam(K)/\rho_K \leq c_{\mathcal T}, \qquad \forall K \in \mathcal{T}_n, \; n = 0,1,\dots, N-1,
\end{equation}
where $\rho_K$ is the radius of the inscribed circle of $K$.

\our{For simplicity of the presentation \emph{only} of the a priori error bounds below, we shall later make a shape-regularity assumption on the space-time  mesh  (cf., Assumption \ref{as:st_reg}).  We stress, however, that the stability results presented below do not depend on Assumption \ref{as:st_reg} and, therefore, the numerical method proposed below is unconditionally stable for any choice of spatial and temporal meshsizes. Indeed, an important advantage in using such space-time methods is that they do \emph{not} require any CFL-type restrictions.}

% The only results that depend on this assumption are the estimates in the standard norm in Proposition~\ref{prop:positiveE} and Corollary~\ref{cor:positiveE}.}

Finally, the broken spatial gradient will be denoted by $\nabla_n v$, given by $(\nabla_n v)|_K:= (\nabla v)|_K$ for all $K\in\mathcal{T}_n$ and a $v\in C(I_n;H^1_0(\Omega))+\dspace{p}$; collectively, we shall denote the broken gradient by $\babla v$  defined as
$(\babla v)|_{\Omega\times I_n}:= (\nabla_n v)|_{\Omega\times I_n}$, $n=0,\dots,N-1,$
for $v\in C(\prod_{n=0}^{N-1}\mathring{I}_n;H^1_0(\Omega))+\dspaceT{p}$, i.e., $v$ is allowed to be  discontinuous both in space and in time.

\section{A space-time discontinuous Galerkin method}\label{sec:method}
To derive the weak form suitable for dG discretisation we will follow an energy argument. We start by assuming that $u$ is a smooth enough solution of \eqref{eq:wave} and  let  $v \in \solnspace + \dspaceT{p}$. The standard symmetric interior penalty dG weak formulation on the time-slab $I_n$ when tested with $\dot v$ is given by
\begin{equation}\label{weak_IP_one}
\begin{split}
   (\ddot u,\dot v)_{\Omega \times I_n} &+ (a \babla u,\babla \dot v)_{\Omega \times I_n}
- (\sa{a \nabla u}, \sj{\dot v})_{\Gamma_n \times I_n}\\ 
&- (\sj{u},\sa{a \nabla \dot v})_{\Gamma_n \times I_n}+( \sigma_0 \sj{u} , \sj{\dot v} )_{\Gamma_n \times I_n} = 0,
\end{split}
\end{equation}
where 
\begin{equation}
  \label{eq:sigma_defn}
\sigma_0(x,t) = C_{\sigma_0} p^2(h(x,t))^{-1}  ,
\end{equation}
for a positive constant $C_{\sigma_0}$ to be made precise later.
This motivates the following definition of \emph{discrete energy} $E_h(t,v)$ at time $t \in I_n$, for \reply{$v\in \solnspace + \dspaceT{p}$}:
\[
E_h(t,v) := \tfrac12\|\dot v(t)\|^2_{\Omega}+\tfrac12\|\sqrt{a}\babla v(t)\|^2_{\Omega}
+\tfrac12\|\sqrt{\sigma_0}\sj{v(t)}\|^2_{\Gamma_n}-\lip{\sa{a \nabla v(t)}}{ \sj{ v(t)}}{\Gamma_n}.
\]
Using the classical inverse inequality
$
  \|v\|^2_{\partial K} \leq C_{\text{inv}} p^2 |\partial K|/|K| \|v\|^2_K$, for all $ v \in \mathcal{P}^p(K)$,  (see, e.g., \cite{PieE,SchwabBook},) we see that
\[
\begin{split}  
2\int_{\Gamma_n} |\sa{a \nabla v(t,\reply{s})}|^2 ds &\leq  C_a^2 \sum_{K \in \mathcal{T}_n} \int_{\partial K} |\nabla v(t,\reply{s})|^2 ds\\
&\leq  \sum_{K \in \mathcal{T}_n} \frac{C_a^2C_{\text{inv}}p^2|\partial K|}{|K|}\int_{ K}  |\nabla v(t,\reply{x})|^2 dx\\
&\leq  \sum_{K \in \mathcal{T}_n} \frac{c_{\mathcal T} C_a^2 C_{\text{inv}}p^2}{c_a h_K}\int_{ K} |\sqrt{a}  \nabla v(t,\reply{x})|^2 dx.
\end{split}
\]
Hence,  if the penalisation parameter $C_{\sigma_0}$ is chosen large enough: in particular 
\begin{equation}
  \label{eq:Cstab_ineq}
%C_{\sigma_0} \geq c_\mathcal{T} C_a C_{\text{inv}}/(\reply{4}c_a),  
C_{\sigma_0} \geq c_\mathcal{T} C_a^2 C_{\text{inv}}/c_a,  
\end{equation}
suffices we have that
\begin{equation}\label{coercivity_inverse}
  \begin{split}    
|\lip{\sa{a \nabla v(t)}}{ \sj{ v(t)}}{\Gamma_n}| &\le \tfrac12  \|\sqrt{\sigma_0}\sj{v(t)}\|^2_{\Gamma_n}+\tfrac12 \int_{\Gamma_n}\sigma_0^{-1} |\sa{a \nabla v(t,\reply{s})}|^2 ds\\
\\&\le \tfrac12\|\sqrt{\sigma_0}\sj{v(t)}\|^2_{\Gamma_n} +\tfrac{1}{\reply{4}}\|\sqrt{a}\babla v(t)\|^2_{\Omega},
  \end{split}
\end{equation}
ensuring the \reply{non-negativity} of the energy $E_h(t,v)$ for functions in \reply{$\dspaceT{p}$}:
\[
E_h(t,v) \geq   \tfrac12\|\dot v(t)\|^2_{\Omega}+\tfrac14\|\sqrt{a}\babla v(t)\|^2_{\Omega}, \qquad \text{for all } v \in \mathcal{X}+\dspaceT{p}.
\]
% \begin{remark}
%   If $a(x)$ is constant within an element $K$, as we will assume in the next section, the occurances of $C_a$ and $c_a$ in \eqref{eq:Cstab_ineq} can be removed.
%ONLY if $a$ is not a matrix!!
% \end{remark}

 Choosing as test function $v = u$ in \eqref{weak_IP_one} and summing over $n$, we obtain
\[
\begin{split}
 0 &= \sum_{n = 0}^{N-1} \int_{I_n}\frac{d}{dt} \Big(\tfrac12 \ltwo{\dot u}{\Omega}^2+\tfrac12 \ltwo{\sqrt{a} \babla u}{\Omega}^2- \lip{\sa{a\nabla u}}{\sj{ u}}{\Gamma_n} + \tfrac12 \ltwo{\sqrt{\sigma_0} \sj{u}}{\Gamma_n}^2 \Big) dt
\\
&= E_h(t_N^-,u)-E_h(t_0^+,u)-\sum_{n = 1}^{N-1}\tj{E_h(t_n,u)}.
\end{split}
\] 
In order to allow for discontinuities in time, the formulation \eqref{weak_IP_one} needs to be modified (in a consistent fashion) to control the terms $\tj{E_h(t_n,u)}$ that have no sign. To this end, we shall use the elementary algebraic identity
\begin{equation}
  \label{eq:upwind}
  \begin{split}    
 -\tj{ f(u(t_n))g(u(t_n))} +\tj{ f(u(t_n))}g(u(t_n^+))
&+ \tj{ g(u(t_n))}f(u(t_n^+)) \\
  &= \tj{f(u(t_n))}\tj{g(u(t_n))},
  \end{split}
\end{equation}
\reply{for some scalar quantities $f,g$ which may, in general, be discontinuous across $t_n$. The idea here is to add terms that change the jump of a product to the product of jumps in the above energy identity, without compromising its consistency. For instance, to $\lip{\ddot u}{ \dot v}{\Omega \times I_n} $  in \eqref{weak_IP_one} we add the extra term $\lip{\tj{\dot u(t_n)}}{\dot v(t_n^+)}{\Omega}$.
For $n > 0$, this does not change the consistency of \eqref{weak_IP_one} with respect to \eqref{eq:wave} in weak form, as the smoothness assumptions on the initial data ensure that the exact solution satisfies $\dot u \in C([0,T]; L^2(\Omega))$;  for $n = 0$, we have by the initial condition $\dot u(t_0) = v_0$ that
\[
\lip{\tj{\dot u(t_0)}}{\dot v(t_0^+)}{\Omega}
= \lip{\dot u(t_0^+)}{\dot v(t_0^+)}{\Omega}
= \lip{v_0}{\dot v(t_0^+)}{\Omega}.
\]
Hence, for consistency, we add the term $\lip{\dot u_0}{\dot v(t_0^+)}{\Omega}$ to the right-hand side also.
Using \eqref{eq:upwind} now, for $u=v=w$, with $w$ piecewise sufficiently smooth function, we have
\[
\begin{split}  
\sum_{n = 0}^{N-1}\lip{\ddot w}{ \dot w}{\Omega \times I_n}  +\lip{\tj{\dot w(t_n)}}{\dot w(t_n^+)}{\Omega}
=& \tfrac12\ltwo{\dot w(t^-_N)}{\Omega}^2+\sum_{n = 0}^{N-1}\lip{\tj{\dot w(t_n)}}{\dot w(t_n^+)}{\Omega} -\tj{\tfrac12\ltwo{\dot w(t_n)}{\Omega}^2} \\
=&\tfrac12\ltwo{\dot w(t_N^-)}{\Omega}^2+\tfrac12\ltwo{\dot w(t_0^+)}{\Omega}^2+
\tfrac12\sum_{n = 1}^{N-1}\ltwo{\tj{\dot w(t_n)}}{\Omega}^2,
%\\
%=&\tfrac12\ltwo{\dot w(t_N^-)}{\Omega}^2-\tfrac12\ltwo{\dot w(t_0^+)}{\Omega}^2+\ltwo{v_0}{\Omega}^2-\tfrac12\sum_{n = 1}^{N-1}\ltwo{\tj{\dot w(t_n)}}{\Omega}^2,
\end{split}
\]
with the additional terms contributing to energy dissipation leading to a stable method.
Completely analogous considerations lead to addition of corresponding terms to treat the second and the last terms on the left-hand side of \eqref{weak_IP_one}. For the remaining third and fourth terms on the left-hand side of \eqref{weak_IP_one}, we include the additional terms $ - \lip{\tj{\sa{a\babla u(t_n)}}}{\sj{ v(t_n^+)}}{\hGamman}$ and $- \lip{\tj{\sj{u(t_n)}}}{\sa{a\nabla  v(t_n^+)}}{\hGamman}$; note that these again do not change the consistency due to both $u\in C([0,T]; H^1_0(\Omega))$ and \eqref{eq:transmission}. The use of $\hGamman = \Gamma_{n-1} \cup \Gamma_n$ in the last terms also merits a brief explanation: since we assume that both the solution and its spatial flux are continuous within a space-time element $K \times I_n$, $K \in \mathcal{T}_n$ it follows that
\[
 \lip{\sa{a \nabla u}}{\sj{\dot v}}{\Gamma_n \times I_n} =  \lip{\sa{a \nabla u}}{\sj{\dot v}}{\hGamman \times I_n}\ \text{ and }\ 
 \lip{\sj{u}}{\sa{a \nabla \dot v}}{\Gamma_n \times I_n}
= \lip{\sj{u}}{\sa{a \nabla \dot v}}{\hGamman \times I_n}.
\]
}
\reply{For consistency, the terms $- \lip{ \sa{a\nabla u_0}}{\sj{ v(t_0^+)}}{\Gamma_0}- \lip{ \sj{u_0}}{\sa{a\nabla  v(t_0^+)}}{\Gamma_0}$ are also added to the right-hand side of \eqref{weak_IP_one}. The above identity can again be used to show that we will have terms of the type $\lip{\tj{\sa{a\babla u(t_n)}}}{\tj{\sj{ u(t_n)}}}{\hGamman}$ in the energy identity; these do not have a sign but can be bounded by the other terms in the energy using \eqref{coercivity_inverse}.}

In view of the above considerations, we can now state the space-time weak formulation of our method:
\begin{equation}
  \label{eq:formulation}
  \begin{aligned}
\sum_{n = 0}^{N-1}\lip{\ddot u}{ \dot v}{\Omega \times I_n} &+\lip{\tj{\dot u(t_n)}}{\dot v(t_n^+)}{\Omega} 
+\lip{a \babla u}{ \babla \dot v}{\Omega \times I_n} + \lip{\tj{a \babla u(t_n)}}{\babla v(t_n^+)}{\Omega}\\
&- \lip{\sa{a \nabla u}}{\sj{\dot v}}{\Gamma_n \times I_n} - \lip{\tj{\sa{a\babla u(t_n)}}}{\sj{ v(t_n^+)}}{\hGamman}  
\\ &- \lip{\sj{u}}{\sa{a \nabla \dot v}}{\Gamma_n \times I_n}  
- \lip{\tj{\sj{u(t_n)}}}{\sa{a\nabla  v(t_n^+)}}{\hGamman} 
\\ &+ \lip{\sigma_0  \sj{u}}{\sj{\dot v}}{\Gamma_n \times I_n} + \lip{\sigma_0 \tj{ \sj{u(t_n)}}}{ \sj{ v(t_n^+)}}{\hGamman} 
\\ &+  \lip{\sigma_1 \sj{u}}{\sj{v}}{\Gamma_n \times I_n}
+\lip{\sigma_2 \sj{a\nabla u}}{\sj{a\nabla v}}{\Gamma_n \times I_n}  = \binit(v),
  \end{aligned}
\end{equation}
\reply{where  $\binit$ is given by
\begin{equation}
  \label{eq:rhsform_init}
  \begin{aligned}    
  \binit(v) := & 
\lip{ v_0}{ \dot v(t_0^+)}{\Omega} + \lip{a \babla u_0}{ \babla v(t_0^+)}{\Omega}
- \lip{ \sa{a\nabla u_0}}{\sj{ v(t_0^+)}}{\Gamma_0}\\
&- \lip{ \sj{u_0}}{\sa{a\nabla  v(t_0^+)}}{\Gamma_0}
+ \lip{\sigma_0   \sj{u_0}}{\sj{ v(t_0^+)}}{\Gamma_0}.
  \end{aligned}  
\end{equation}}
Note that the last two terms in the definition of $\binit$ are zero if the initial data is continuous in space.
The terms with positive penalty parameters $\sigma_1$ and $\sigma_2$ in the weak formulation \eqref{eq:formulation} do not affect the consistency of the weak formulation; the need for their inclusion in the method will become apparent in the convergence analysis.

\reply{ Although the choice of $\sigma_0$, $\sigma_1$, and $\sigma_2$ will only become evident in the analysis, for easy referral we state the choice of the stabilisation parameters $\sigma_0$, $\sigma_1$, and $\sigma_2$ here also: $\sigma_0$ is chosen as \eqref{eq:sigma_defn}, (see also \eqref{eq:Cstab_ineq},) and we set
\[
\sigma_1|_{\partial K\cap \Gamma_n\times I_n}= C_a\frac{ p^3}{h\tau_n} , \qquad
\sigma_2 = \frac{h}{C_a\tau_n}.
\]
}

Thus, we arrive at a space-time discrete method, which can be thought of in two ways: as a method for obtaining a discrete solution on a fixed space-time domain $\Omega \times [0,T]$, or as a time-stepping method. The former viewpoint will be  useful in obtaining convergence estimates, while the latter in implementing the method. Consequently, we define the following three bilinear forms to describe these two viewpoints:
\begin{equation}
  \label{eq:lhsform}
  \begin{aligned}    
  a_n(u,v) &:=\
   \lip{\ddot u}{\dot v}{\Omega \times I_n}+ \lip{\dot u(t^+_n)}{ \dot v(t_n^+)}{\Omega} \\
   &+\lip{a\babla u}{\babla \dot v}{\Omega \times I_n}+ \lip{ a\babla u(t_n^+)}{ \babla v(t_n^+)}{\Omega}\\
   &- \lip{\sa{a\nabla u}}{\sj{\dot v}}{\Gamma_n \times I_n}- \lip{\sa{a\nabla u(t_n^+)}}{\sj{ v(t_n^+)}}{\Gamma_n}  \\
   &- \lip{\sj{u}}{ \sa{a\nabla \dot v}}{\Gamma_n \times I_n} - \lip{\sj{u(t_n^+)}}{\sa{a\nabla  v(t_n^+)}}{\Gamma_n}  \\
   &+ \lip{\sigma_0   \sj{u}}{ \sj{\dot v}}{\Gamma_n \times I_n} + \lip{\sigma_0  \sj{u(t_n^+)}}{\sj{ v(t_n^+)}}{\Gamma_n} \\
&+ \lip{\sigma_1   \sj{u}}{ \sj{ v}}{\Gamma_n \times I_n}
+ \lip{\sigma_2   \sj{a \nabla u}}{ \sj{a \nabla v}}{\Gamma_n \times I_n},
  \end{aligned}
\end{equation}
\begin{equation}
  \label{eq:rhsform}
  \begin{aligned}    
  b_n(u,v) :=&\  
\lip{ \dot u(t^-_n)}{ \dot v(t_n^+)}{\Omega} + \lip{a\babla u(t_n^-)}{ \babla v(t_n^+)}{\Omega}
- \lip{ \sa{a\nabla u(t_n^-)}}{\sj{ v(t_n^+)}}{\Gamma_n}\\
&- \lip{ \sj{u(t_n^-)}}{\sa{a\nabla  v(t_n^+)}}{\Gamma_{n-1}}
+ \lip{\sigma_0   \sj{u(t_n^-)}}{\sj{ v(t_n^+)}}{\hGamman},
  \end{aligned}  
\end{equation}
 and
\begin{equation}
  \label{eq:aform}  
a(u,v) := \sum_{n = 0}^{N-1} a_n(u,v)-\sum_{n = 1}^{N-1}b_n(u,v),
\end{equation}
which just gives the left-hand side in \eqref{eq:formulation}. 
\begin{definition}\label{defn:time_stepping}
Given subspaces $X_n \subseteq \dspace{p}$, the time-stepping method is described by: find $u^n \in X_n$, $n = 1, 2, \dots, N-1$,  such that
\begin{equation}
  \label{eq:timestep}
a_n(u^n,v) = b_n(u^{n-1},v), \qquad  \text{ for all }v \in X_n
\end{equation}
\reply{and
\begin{equation}
  \label{eq:timestep_initial}
  a_0(u^0,v) = \binit(v), \qquad  \text{ for all }v \in X_0.
\end{equation}}
 Equivalently, given a subspace $X \subseteq \dspaceT{p}$, the full space-time discrete system can be presented as: find $u \in X$  such that
\begin{equation}
  \label{eq:fullsys}
  a(u,v) = \binit(v), \qquad \text{for all } v \in X.
\end{equation}  
\end{definition}
\begin{lemma}
  The following identities hold for \reply{any $w \in \solnspace+ \dspaceT{p}$}:
\begin{equation}
  \label{eq:a_nuu}
a_n(w,w) = E_h(t_{n+1}^-,w)+E_h(t_n^+,w)+\ltwo{\sqrt{\sigma_1}   \sj{w}}{\Gamma_n \times I_n}^2
+ \ltwo{\sqrt{\sigma_2}   \sj{a \nabla w}}{\Gamma_n \times I_n}^2,    
\end{equation}
for $n = 0,1,\dots,N-1$, and
\begin{equation}
  \label{eq:dg_norm_expr}  
\begin{split}  
a(w,w) = E_h(t_N^-,w)&+E_h(t_0^+,w) +\sum_{n = 1}^{N-1} \Big(\tfrac12 \ltwo{ \tj{\dot w(t_n)}}{\Omega}^2+\tfrac12\ltwo{\sqrt{a}\tj{ \babla w(t_n)}}{\Omega}^2 \\
&-\lip{\tj{\sa{a \babla w(t_n)}}}{\tj{\sj{w(t_n)}}}{\hGamman}+\tfrac12\ltwo{ \tj{\sqrt{\sigma_0}\sj{w(t_n)}}}{\hGamman}^2\Big)\\
&+ \sum_{n = 0}^{N-1}\Big(\ltwo{\sqrt{\sigma_1} \sj{w}}{\Gamma_n \times I_n}^2
+ \ltwo{\sqrt{\sigma_2} \sj{a \nabla w}}{\Gamma_n \times I_n}^2\Big).
\end{split}
\end{equation} 
\end{lemma}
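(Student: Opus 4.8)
The proof rests on two elementary ingredients: the fundamental theorem of calculus applied slab-by-slab, and the polarisation identity $\ha\|x^-\|^2+\ha\|x^+\|^2-\lip{x^-}{x^+}{{}}=\ha\|x^+-x^-\|^2$ at the time-interfaces. The plan is to establish the per-slab identity \eqref{eq:a_nuu} first and then to sum it, absorbing the overlapping energies at the interfaces into squared temporal jumps.

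For \eqref{eq:a_nuu}, one puts $u=v=w$ in \eqref{eq:lhsform}. Since on the slab $I_n$ the weights $\sigma_0,\sigma_1,\sigma_2$ are independent of $t$ (they depend only on the spatial meshsize on $\mathcal{T}_n$), the fundamental theorem of calculus applies directly to the four space-time integral terms: $\lip{\ddot w}{\dot w}{\Omega\times I_n}=\int_{I_n}\tfrac{d}{dt}\bigl(\ha\|\dot w\|_{\Omega}^2\bigr)\ud t$, $\lip{a\babla w}{\babla\dot w}{\Omega\times I_n}=\int_{I_n}\tfrac{d}{dt}\bigl(\ha\|\sqrt a\babla w\|_{\Omega}^2\bigr)\ud t$, $\lip{\sigma_0\sj{w}}{\sj{\dot w}}{\Gamma_n\times I_n}=\int_{I_n}\tfrac{d}{dt}\bigl(\ha\|\sqrt{\sigma_0}\sj{w}\|_{\Gamma_n}^2\bigr)\ud t$, while the two mixed skeleton terms combine by the product rule into $-\int_{I_n}\tfrac{d}{dt}\lip{\sa{a\nabla w}}{\sj{w}}{\Gamma_n}\ud t$. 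Evaluating at the two endpoints of $I_n$, the contributions at $t_{n+1}^-$ assemble into exactly $E_h(t_{n+1}^-,w)$ and those at $t_n^+$ into $-E_h(t_n^+,w)$. The five ``$(t_n^+)$'' point terms of \eqref{eq:lhsform}, after merging the two mixed ones by symmetry of the inner product, sum to $\|\dot w(t_n^+)\|_{\Omega}^2+\|\sqrt a\babla w(t_n^+)\|_{\Omega}^2+\|\sqrt{\sigma_0}\sj{w(t_n^+)}\|_{\Gamma_n}^2-2\lip{\sa{a\nabla w(t_n^+)}}{\sj{w(t_n^+)}}{\Gamma_n}=2E_h(t_n^+,w)$; adding the remaining $\sigma_1$- and $\sigma_2$-terms and using $-E_h(t_n^+,w)+2E_h(t_n^+,w)=E_h(t_n^+,w)$ gives \eqref{eq:a_nuu}.

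For \eqref{eq:dg_norm_expr} one substitutes \eqref{eq:a_nuu} into \eqref{eq:aform}. Reindexing, $\sum_{n=0}^{N-1}E_h(t_{n+1}^-,w)=E_h(t_N^-,w)+\sum_{n=1}^{N-1}E_h(t_n^-,w)$ and $\sum_{n=0}^{N-1}E_h(t_n^+,w)=E_h(t_0^+,w)+\sum_{n=1}^{N-1}E_h(t_n^+,w)$, so it remains to verify, for each $n=1,\dots,N-1$, that $E_h(t_n^-,w)+E_h(t_n^+,w)-b_n(w,w)$ equals the bracketed jump expression in \eqref{eq:dg_norm_expr}. This follows by applying the polarisation identity componentwise with $x^\pm$ equal to $\dot w(t_n^\pm)$, $\sqrt a\babla w(t_n^\pm)$ and $\sqrt{\sigma_0}\sj{w(t_n^\pm)}$ (giving the three $\ha\|\tj{\cdot}\|^2$ contributions), while the four mixed flux-jump terms — two from $E_h(t_n^\pm,w)$ and two from $-b_n(w,w)$ — expand, after writing $\tj{\cdot}=(\cdot)(t_n^+)-(\cdot)(t_n^-)$, into the single cross-term $-\lip{\tj{\sa{a\babla w(t_n)}}}{\tj{\sj{w(t_n)}}}{\hGamman}$. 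Summing over $n=1,\dots,N-1$ and re-attaching $E_h(t_N^-,w)+E_h(t_0^+,w)$ together with the $\sigma_1$- and $\sigma_2$-sums then produces \eqref{eq:dg_norm_expr}.

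The step requiring the most care — and the only genuine obstacle — is matching the spatial skeletons in the last computation: the skeleton integrals inside $E_h(t_n^-,w)$ live on $\Gamma_{n-1}$ (its time-argument lies in $I_{n-1}$), those inside $E_h(t_n^+,w)$ on $\Gamma_n$, whereas $b_n$ and the target cross- and jump-terms are posed on $\hGamman=\Gamma_{n-1}\cup\Gamma_n$. Replacing $\Gamma_{n-1}$ (resp.\ $\Gamma_n$) by $\hGamman$ in these integrals is harmless because $\sj{w(t_n^-)}$ has vanishing jump on $\hGamman\setminus\Gamma_{n-1}$ and $\sj{w(t_n^+)}$ has vanishing jump on $\hGamman\setminus\Gamma_n$: within a slab the relevant trace is continuous across the faces belonging only to the other slab's mesh — for $w\in\solnspace$ by \eqref{eq:smoothness1} and the transmission conditions \eqref{eq:transmission}, for $w\in\dspaceT{p}$ by the definition of the local spaces — exactly as already noted in the discussion preceding \eqref{eq:formulation}. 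When the spatial mesh is kept fixed one has $\Gamma_{n-1}=\Gamma_n=\hGamman$ and this bookkeeping is vacuous.
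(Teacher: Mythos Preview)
Your argument is correct and is precisely the computation the paper has in mind: the paper's own proof is the single sentence ``The identities follow from the definitions of the bilinear forms and the energy $E_h(t,w)$,'' and your proposal is simply a careful unpacking of that sentence. Your extra care about matching the skeletons $\Gamma_{n-1}$, $\Gamma_n$, $\hGamman$ at the interfaces is a point the paper leaves entirely implicit.
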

\begin{proof}
  The identities follow from the definitions of the bilinear forms and the energy $E_h(t,w)$.
\end{proof}

Next we investigate the consistency and stability of the discrete scheme.

\begin{theorem}[Consistency and Stability]\label{thm:const_stab} Let the space $\dspaceT{p}$ be given. Then, the following statements hold:
  \begin{enumerate}
  \item  \label{thm:consistency} Let $u$ be the weak solution of \eqref{eq:wave}, with $u_0 \in H^1_0(\Omega)$ and $v_0 \in L^2(\Omega)$. Then $u$ satisfies \eqref{eq:fullsys}.
\item  \label{thm:stability} \reply{For $C_{\sigma_0}$  satisfying} \eqref{eq:Cstab_ineq} and for any $v \in \dspaceT{p}$ and $t \in (0,T)$, the energy $E_h(t,v)$ 
is bounded from below by
\begin{equation}\label{energy_positive}
E_h(t,v) \geq  \tfrac12\ltwo{\dot v(t)}{\Omega}^2+\tfrac14\ltwo{\sqrt{a}\babla v(t)}{\Omega}^2. 
%+\frac14\ltwo{\sqrt{\sigma_0}\sj{v(t)}}{\Gamma_n}^2.
\end{equation}
%Further, let $u^n \in \dspace{p}$, $n = 0,\dots, N-1$,  satisfy \eqref{eq:formulation}. Then,
Further, if $X$ is a subspace of $\dspaceT{p}$ and $U \in X$ is the discrete solution, i.e., satisfies \eqref{eq:fullsys}, then 
$
E_h(t_N^-,U) \leq E_h(t_1^-,U).
$
  \end{enumerate}
\end{theorem}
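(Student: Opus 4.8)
The plan is to prove the three assertions in order, using the energy identities of the preceding lemma together with the coercivity estimate \eqref{coercivity_inverse}.

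\medskip

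\textbf{Consistency.} First I would verify \eqref{eq:fullsys} for the exact weak solution $u$. The formulation \eqref{eq:formulation} was \emph{derived} by testing the interior-penalty weak form \eqref{weak_IP_one} with $\dot u$, summing over slabs, and adding consistent correction terms; so the main task is bookkeeping: one multiplies the PDE \eqref{eq:wave} by $\dot v$, integrates over $\Omega\times I_n$, integrates by parts in space using the Dirichlet and transmission conditions \eqref{eq:transmission} to produce the symmetric flux terms (the penalty terms $\sigma_0,\sigma_1,\sigma_2$ contribute nothing since $\sj{u}=0$ and $\sj{a\nabla u}=0$ for the exact solution), then uses the regularity \eqref{eq:smoothness1} — namely $\dot u\in C([0,T];L^2(\Omega))$ and $u\in C([0,T];H^1_0(\Omega))$ — to conclude that all temporal-jump correction terms $\tj{\dot u(t_n)}$, $\tj{a\babla u(t_n)}$, $\tj{\sa{a\babla u(t_n)}}$, $\tj{\sj{u(t_n)}}$ vanish for $n\ge 1$, while for $n=0$ the initial conditions $\dot u(t_0)=v_0$, $u(t_0)=u_0$ reproduce exactly the terms collected in $\binit(v)$ via \eqref{eq:rhsform_init}. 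This is essentially the computation already carried out in the text preceding \eqref{eq:formulation}, so I would simply assemble it.

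\medskip

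\textbf{Non-negativity of the energy.} For \eqref{energy_positive} I would invoke \eqref{coercivity_inverse} directly: under \eqref{eq:Cstab_ineq} the cross term $\lip{\sa{a\nabla v(t)}}{\sj{v(t)}}{\Gamma_n}$ is bounded by $\tfrac12\|\sqrt{\sigma_0}\sj{v(t)}\|_{\Gamma_n}^2+\tfrac14\|\sqrt{a}\babla v(t)\|_\Omega^2$, and substituting this into the definition of $E_h(t,v)$ absorbs the $\sigma_0$-jump term entirely and leaves half of the gradient term, giving \eqref{energy_positive}. This is immediate.

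\medskip

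\textbf{The dissipativity bound $E_h(t_N^-,U)\le E_h(t_1^-,U)$.} This is the substantive part. The idea is to test \eqref{eq:fullsys} with $v=U$ and use \eqref{eq:dg_norm_expr} to get an expression for $a(U,U)=\binit(U)$; but since we want a statement involving only $U$ and comparing $t_N^-$ with $t_1^-$, I would instead work slab-by-slab with the time-stepping form. From \eqref{eq:a_nuu} with $w=U$ and the time-stepping relation $a_n(U^n,v)=b_n(U^{n-1},v)$ (for $n\ge 1$) tested with $v=U^n$, one obtains for each $n\ge 1$
\[
E_h(t_{n+1}^-,U)+E_h(t_n^+,U)+\ltwo{\sqrt{\sigma_1}\sj{U}}{\Gamma_n\times I_n}^2+\ltwo{\sqrt{\sigma_2}\sj{a\nabla U}}{\Gamma_n\times I_n}^2=b_n(U^{n-1},U^n).
\]
The crux is then to show $b_n(U^{n-1},U^n)\le E_h(t_n^-,U)+E_h(t_n^+,U)+(\text{the two penalty terms})$, i.e. that the ``transfer'' functional $b_n$ is controlled by the energies on the two sides of $t_n$ plus exactly the dissipative penalty contributions, so that after cancellation one is left with $E_h(t_{n+1}^-,U)\le E_h(t_n^-,U)$, and iterating from $n=1$ to $N-1$ gives the claim. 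To bound $b_n$ I would complete the square: $b_n(U^{n-1},U^n)$ pairs traces at $t_n^-$ with traces at $t_n^+$ in exactly the bilinear pattern that appears in $E_h$, so writing $b_n(w^-,w^+)=\tfrac12\big(\mathcal E(w^-)+\mathcal E(w^+)\big)-\tfrac12\mathcal E(w^+ - w^-)+\dots$ for the appropriate quadratic form $\mathcal E$ — this is the same algebraic identity \eqref{eq:upwind}, now read as ``$2fg = f^2+g^2-(f-g)^2$'' applied termwise to $\dot U$, $\sqrt a\babla U$, $\sqrt{\sigma_0}\sj U$, together with the symmetric flux pairing — one sees that $b_n(U^{n-1},U^n)=E_h(t_n^-,U)+E_h(t_n^+,U)$ minus the nonnegative jump contributions $\tfrac12\ltwo{\tj{\dot U(t_n)}}{\Omega}^2+\tfrac12\ltwo{\sqrt a\tj{\babla U(t_n)}}{\Omega}^2+\tfrac12\ltwo{\tj{\sqrt{\sigma_0}\sj{U(t_n)}}}{\hGamman}^2$ plus the indefinite term $\lip{\tj{\sa{a\babla U(t_n)}}}{\tj{\sj{U(t_n)}}}{\hGamman}$. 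This last indefinite term is precisely the one that must be absorbed, and here is where I expect the main obstacle: one must show it is dominated by the available nonnegative quantities. I would bound it by Cauchy–Schwarz and the inverse inequality (as in the derivation of \eqref{coercivity_inverse}) against $\ltwo{\sqrt{\sigma_0}\tj{\sj{U(t_n)}}}{\hGamman}^2$ and $\ltwo{\sqrt a\tj{\babla U(t_n)}}{\Omega}^2$ — the $\hGamman$ rather than $\Gamma_n$ here matters, and one uses that $\tj{\babla U(t_n)}$ restricted to each element controls the boundary flux jump on both $\Gamma_{n-1}$ and $\Gamma_n$ — so that it is absorbed into the three squared jump terms above with room to spare, leaving $E_h(t_{n+1}^-,U)+(\sigma_1,\sigma_2\text{ terms})\le E_h(t_n^-,U)$. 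Since the $\sigma_1,\sigma_2$ contributions and the leftover jump terms are all nonnegative, $E_h(t_{n+1}^-,U)\le E_h(t_n^-,U)$ for every $n\ge 1$, and chaining these inequalities from $n=N-1$ down to $n=1$ yields $E_h(t_N^-,U)\le E_h(t_1^-,U)$. The only delicate points are the correct constant in the inverse-inequality step (already fixed by \eqref{eq:Cstab_ineq}) and keeping careful track of which skeleton each jump term lives on.
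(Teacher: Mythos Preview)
Your treatment of consistency and of the energy lower bound \eqref{energy_positive} is exactly what the paper does: both are immediate consequences of the derivation of the scheme and of \eqref{coercivity_inverse}, respectively.

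For the dissipativity statement your argument is correct but organised differently from the paper's. You work slab-by-slab: you take \eqref{eq:a_nuu} for $a_n(U,U)$, equate it with $b_n(U^{n-1},U^n)$ via the time-stepping relation, and then establish a polarisation identity for $b_n$ against $E_h(t_n^-,U)+E_h(t_n^+,U)$, absorbing the resulting indefinite flux-jump cross term by the inverse-inequality argument of \eqref{coercivity_inverse}. Iterating yields $E_h(t_{n+1}^-,U)\le E_h(t_n^-,U)$ for each $n\ge 1$. The paper instead uses the already-established global identity \eqref{eq:dg_norm_expr} for $a(U,U)=\binit(U)$, then substitutes $\binit(U)=a_0(U,U)$ together with \eqref{eq:a_nuu} at $n=0$; this eliminates $E_h(t_0^+,U)$ and $\binit(U)$ in one stroke and delivers the closed-form identity \eqref{eq:dis_energy1}, from which the inequality follows by a single application of \eqref{coercivity_inverse} to the summed jump terms. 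The paper's route is shorter because it recycles the lemma rather than re-deriving its content step by step; your route has the minor advantage of making the per-step monotonicity explicit, at the cost of having to track carefully which skeleton ($\Gamma_{n-1}$, $\Gamma_n$, or $\hGamman$) each term in the $b_n$ polarisation lives on --- a bookkeeping issue you correctly flag but which the global identity \eqref{eq:dg_norm_expr} has already absorbed.
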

\begin{proof}
Statement \ref{thm:consistency} follows from the derivation of the formulation and the regularity of the unique solution $u$; see \eqref{eq:smoothness} and \eqref{eq:smoothness1}.  We have already shown the positivity of the energy under the condition on $C_{\sigma_0}$; see \eqref{coercivity_inverse}. 

To prove the remaining statement we proceed as follows. \reply{Combining \eqref{eq:fullsys} with \eqref{eq:dg_norm_expr} gives} the energy identity 
\begin{equation}
  \label{eq:dis_energy}
  \begin{split}    
E_h(t_N^-,U) = \ &  \binit(U)-E_h(t_0^+,U)-\sum_{n = 1}^{N-1} \tfrac12 \ltwo{\tj{\dot U(t_n)}}{\Omega}^2+\tfrac12\ltwo{\sqrt{a}\tj{\babla U(t_n)}}{\Omega}^2 \\
&+\sum_{n = 1}^{N-1} \lip{\tj{\sa{a \babla U(t_n)}}}{\tj{\sj{U(t_n)} }}{\hGamman}-\tfrac12\ltwo{\sqrt{\sigma_0} \tj{\sj{U(t_n)}}}{\hGamman}^2\\
&- \sum_{n = 0}^{N-1}\ltwo{\sqrt{\sigma_1}\sj{U}}{\Gamma_n \times I_n}^2
- \sum_{n = 0}^{N-1}\ltwo{\sqrt{\sigma_2}\sj{a \nabla U}}{\Gamma_n \times I_n}^2.
  \end{split}
\end{equation}

Expression \eqref{eq:a_nuu}
implies that
\[
a_0(U,U) = E_h(t_{1}^-,U)+E_h(t_0^+,U)+\ltwo{\sqrt{\sigma_1}   \sj{U}}{\Gamma_0 \times I_0}^2
+ \ltwo{\sqrt{\sigma}_2   \sj{a \nabla U}}{\Gamma_0 \times I_0}^2 = \binit(U).
\]
Hence, the energy identity \eqref{eq:dis_energy} can be written as
\begin{equation}
  \label{eq:dis_energy1}
  \begin{split}    
E_h(t_N^-,U) =&\  E_h(t_1^-,U)-\sum_{n = 1}^{N-1} \Big(\tfrac12 \ltwo{\tj{\dot U(t_n)}}{\Omega}^2+\tfrac12\ltwo{\sqrt{a}\tj{\babla U(t_n)}}{\Omega}^2 \\
&-\lip{\tj{\sa{a\babla U(t_n)}}}{\tj{\sj{U(t_n)} }}{\hGamman}+\tfrac12\ltwo{\sqrt{\sigma_0} \tj{\sj{U(t_n)}}}{\hGamman}^2\\
&+ \ltwo{\sqrt{\sigma_1}\sj{U}}{\Gamma_n \times I_n}^2
+ \ltwo{\sqrt{\sigma_2}\sj{a \nabla U}}{\Gamma_n \times I_n}^2\Big).
  \end{split}
\end{equation}
 Arguments used to prove the non-negativity of the discrete energy \eqref{coercivity_inverse}, also show that the above equality implies that the discrete energy decreases at each time-step.
\end{proof}

\section{Polynomial Trefftz spaces}\label{sec:Tspace}

We shall consider the discrete space of local polynomial solutions to the wave equation, where we make an additional assumption on the mesh and on $a(x)$ that allows us to define the Trefftz spaces. \reply{Such polynomial spaces have already appeared in literature; see for example \cite{KreMPS,MacJ,Whi03}.}

\begin{assumption}
Let the diffusion coefficient $a(\cdot)$ and the mesh be such that $a(\cdot)$ is constant  in each element $K \in \mathcal{T}_n$ for each $n$.  
\end{assumption}

\begin{definition}[Polynomial Trefftz spaces]\label{defn:trefftz_space}
Let $\Tspace \subseteq \dspace{p}$ be a subspace of functions satisfying the homogeneous wave equation on any space-time element $K \times I_n$:
\[
\Tspace: = \left \{ v \in \dspace{p} \;:\; \ddot v(t,x) -\nabla \cdot \left( a \,\nabla v \right)(t,x) = 0, \quad t \in I_n, x \in K,\,  K\in \mathcal{T}_n\right\}.
\]  
The space on $\Omega \times [0,T]$ is then defined as
\[
\TspaceT = \left\{ u \in L^2(\Omega \times [0,T]) \;:\; u|_{\Omega \times I_n} \in \Tspace, \; n = 0,1\dots, N-1\right\} \subseteq \dspaceT{p}.
\]
\end{definition}
Polynomial plane waves are examples of functions in this space
\[
(t+ a^{-1/2}\alpha \cdot  x )^j, \qquad |\alpha| = 1, \alpha \in \mathbb{R}^d, j \in \{0,\dots,p\}.
\]

\begin{proposition}\label{prop:trefftz_poly}
  The local dimension of the Trefftz space in $\mathbb{R}^d$ is given by
\[
\operatorname{dim}(\Tspace(K)) = \left\{
  \begin{array}{cc}
    2p+1 & d = 1\\
    (p+1)^2 & d = 2\\
    \tfrac16 (p+1)(p+2)(2p+3) & d = 3
  \end{array}
\right..
\]
\end{proposition}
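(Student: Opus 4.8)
The plan is to count, for a fixed element $K$ with its constant coefficient $a = a_K$, the dimension of the space of polynomials of total degree at most $p$ in the $d+1$ variables $(t,x_1,\dots,x_d)$ that satisfy the wave equation $\ddot v - a\,\Delta v = 0$ pointwise. After the linear change of variables $x \mapsto a^{1/2} x$ the operator becomes the standard d'Alembertian, so without loss of generality we may take $a=1$ and count polynomial solutions of $\ddot v = \Delta v$ of total degree $\le p$. Since the wave operator $\Box := \partial_{tt} - \Delta$ is homogeneous of degree $2$ (it lowers total degree by exactly $2$), it maps $\mathcal{P}_p(\mathbb{R}^{d+1})$ into $\mathcal{P}_{p-2}(\mathbb{R}^{d+1})$, and the Trefftz space is exactly its kernel on $\mathcal{P}_p$. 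Hence
\[
\dim(\Tspace(K)) = \dim \mathcal{P}_p(\mathbb{R}^{d+1}) - \operatorname{rank}\big(\Box : \mathcal{P}_p(\mathbb{R}^{d+1}) \to \mathcal{P}_{p-2}(\mathbb{R}^{d+1})\big).
\]

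The key step is therefore to show that $\Box$ is \emph{surjective} onto $\mathcal{P}_{p-2}(\mathbb{R}^{d+1})$, for then $\operatorname{rank}(\Box) = \dim \mathcal{P}_{p-2}(\mathbb{R}^{d+1})$ and the result follows from the rank--nullity theorem together with the standard formula $\dim \mathcal{P}_m(\mathbb{R}^{d+1}) = \binom{m+d+1}{d+1}$. Surjectivity can be obtained by exhibiting a right inverse: for any $q \in \mathcal{P}_{p-2}$ one solves $\ddot v = \Delta v + q$ recursively in powers of $t$, i.e. writing $v = \sum_{k\ge 0} t^k g_k(x)/k!$ and $q = \sum_k t^k q_k(x)/k!$ and fixing $g_0 = g_1 = 0$, the relation $g_{k+2} = \Delta g_k + q_k$ determines all coefficients; since $q$ has total degree $\le p-2$, this process terminates and produces $v \in \mathcal{P}_p$. (Equivalently one integrates twice in $t$.) This gives
\[
\dim(\Tspace(K)) = \binom{p+d+1}{d+1} - \binom{p+d-1}{d+1},
\]
and it remains only to specialize $d = 1,2,3$ and simplify: for $d=1$ this is $\binom{p+2}{2} - \binom{p}{2} = 2p+1$; for $d=2$, $\binom{p+3}{3} - \binom{p+1}{3} = (p+1)^2$; for $d=3$, $\binom{p+4}{4} - \binom{p+2}{4} = \tfrac16(p+1)(p+2)(2p+3)$, matching the claimed values.

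The main obstacle is the surjectivity of $\Box$ on polynomial spaces; everything else is elementary binomial arithmetic. The recursive (Cauchy--Kovalevskaya-type) construction above settles it cleanly, and one should note it also reproves that the plane-wave polynomials $(t + \alpha\cdot x/\sqrt a)^j$ span $\Tspace(K)$ only in $d=1$ (where $2p+1$ of them, over the two unit vectors $\alpha = \pm 1$, suffice), whereas for $d \ge 2$ a richer generating set is needed — though this spanning question is not required for the dimension count itself.
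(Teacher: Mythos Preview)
Your argument is correct. The paper takes a different, terser route: it decomposes $\mathcal{P}_p(\mathbb{R}^{d+1})$ into homogeneous components, notes that $d=1$ is immediate, cites Whittaker's classical result that the space of homogeneous Trefftz polynomials of degree $j$ in $\mathbb{R}^{3+1}$ has dimension $(j+1)^2$, and sums $\sum_{j=0}^p (j+1)^2$; the case $d=2$ is handled analogously with homogeneous dimension $2j+1$. Your approach instead treats all $d$ uniformly via rank--nullity applied to $\Box:\mathcal{P}_p\to\mathcal{P}_{p-2}$, with surjectivity supplied by the explicit Cauchy--Kovalevskaya recursion. This is more self-contained (no appeal to the Whittaker reference) and yields the closed formula $\binom{p+d+1}{d+1}-\binom{p+d-1}{d+1}$ valid for arbitrary $d$; in fact, restricting your argument to homogeneous degree $j$ recovers the counts $(j+1)^2$ and $2j+1$ that the paper quotes. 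The paper's approach is shorter on the page but outsources the substantive step, while yours is a genuine proof from scratch.
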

\begin{proof}
  The proof for $d = 1$ is clear. For $d = 3$ in \cite{Whi03} it is shown that the dimension of \reply{the space of  Trefftz, homogeneous polynomials of degree}  $j$ is $(j+1)^2$, hence the total dimension is given by
\[
\sum_{j = 0}^p (j+1)^2 = \tfrac16 (p+1)(p+2)(2p+3).
\]
The case $d = 2$ is proved similarly by noticing that the dimension of \reply{the space of Trefftz, homogeneous polynomials} of  degree $j$ is $2j+1$.
\end{proof}

\begin{example}
In two dimensions, \reply{with a constant, scalar diffusion coefficient $a > 0$},  the following is a  basis for $\Tspace$ with $p = 3$
\[
\begin{split}  
\{ &1,t,x,y,tx,ty,xy,at^2+x^2,at^2+y^2, \\ &xyt, at^3+3x^2t, x^3+3at^2x, y^3+3at^2y,(at^2+x^2)y, (at^2+y^2)x,(x^2-y^2)t\}.
\end{split}
\]
\end{example}
\subsection{Existence and uniqueness}
Next we prove that the discontinuous Galerkin energy norm is indeed a norm on the subspace of Trefftz polynomials. This also includes piecewise linear polynomials as $\TspaceT = \dspaceT{p}$ for $p = 1$.

\begin{proposition}\label{prop:seminorms}
With the choice of $C_{\sigma_0}$ as in \eqref{eq:Cstab_ineq} and $\sigma_1, \sigma_2 > 0$,
bilinear forms $a_n(\cdot,\cdot)$ and $a(\cdot,\cdot)$ give rise to two semi-norms
\[
\dgnorm{v}_n := \big(a_n(v,v)\big)^{1/2}, \qquad v \in \dspace{p}
\] 
and
\[
\dgnorm{v} := \big(a(v,v)\big)^{1/2}, \qquad v \in \dspaceT{p}.
\] 
These are in fact norms on Trefftz subspaces $\Tspace$ and $\TspaceT$.
\end{proposition}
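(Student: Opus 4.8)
The plan is to show that each of the two bilinear forms is symmetric and positive semi-definite on the relevant polynomial space, so that $\dgnorm{\cdot}_n$ and $\dgnorm{\cdot}$ are genuine seminorms by the Cauchy--Schwarz inequality, and then to verify the definiteness property on the Trefftz subspaces. Symmetry is not literally true for $a_n$ as written, since it contains mixed time-derivative terms such as $\lip{\ddot v}{\dot v}{\Omega\times I_n}$ and $\lip{\sigma_0\sj{v}}{\sj{\dot v}}{\Gamma_n\times I_n}$; however, the crucial observation provided by the preceding lemma is that $a_n(v,v)$ and $a(v,v)$ have the closed-form expressions \eqref{eq:a_nuu} and \eqref{eq:dg_norm_expr}, in which all the antisymmetric contributions have already been integrated in time and turned into boundary-in-time energy terms. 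So the first step is to invoke \eqref{eq:a_nuu}, which expresses $a_n(v,v)$ as $E_h(t_{n+1}^-,v)+E_h(t_n^+,v)+\ltwo{\sqrt{\sigma_1}\sj{v}}{\Gamma_n\times I_n}^2+\ltwo{\sqrt{\sigma_2}\sj{a\nabla v}}{\Gamma_n\times I_n}^2$. Combining this with the energy non-negativity \eqref{energy_positive} of Theorem~\ref{thm:const_stab}, which holds because $C_{\sigma_0}$ satisfies \eqref{eq:Cstab_ineq}, gives $a_n(v,v)\ge 0$ for all $v\in\dspace{p}$. To conclude that $\dgnorm{\cdot}_n$ is a seminorm one still needs the bilinear form restricted to the "diagonal'' to behave like a quadratic form: the cleanest way is to note that the map $v\mapsto a_n(v,v)$, via \eqref{eq:a_nuu} together with the explicit form of $E_h$, is a sum of squared (semi)norms minus a single indefinite cross term $-\lip{\sa{a\nabla v(t)}}{\sj{v(t)}}{\Gamma_n}$ evaluated at $t=t_{n+1}^-$ and $t=t_n^+$, and that this cross term is dominated as in \eqref{coercivity_inverse}; hence $a_n(v,v)$ equals a non-negative quadratic functional, from which the triangle inequality (and thus the seminorm property) follows by the standard polarization/Cauchy--Schwarz argument. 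The same reasoning applied to \eqref{eq:dg_norm_expr} handles $\dgnorm{\cdot}$: every term there is a squared seminorm except the jump-in-time cross terms $-\lip{\tj{\sa{a\babla w(t_n)}}}{\tj{\sj{w(t_n)}}}{\hGamman}$, which are again absorbed by $\tfrac12\ltwo{\tj{\sqrt{\sigma_0}\sj{w(t_n)}}}{\hGamman}^2$ plus a fraction of $\tfrac12\ltwo{\sqrt{a}\tj{\babla w(t_n)}}{\Omega}^2$ exactly as in \eqref{coercivity_inverse}, so $a(w,w)\ge 0$ and the seminorm property follows.

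The substantive part is definiteness on the Trefftz spaces. Suppose $v\in\Tspace$ with $\dgnorm{v}_n=0$. From \eqref{eq:a_nuu} and the non-negativity just established, every term must vanish: in particular $E_h(t_n^+,v)=0$ and $E_h(t_{n+1}^-,v)=0$, and since $E_h(t,v)\ge \tfrac12\ltwo{\dot v(t)}{\Omega}^2+\tfrac14\ltwo{\sqrt a\babla v(t)}{\Omega}^2$ we get $\dot v(t_n^+)=0$ and $\babla v(t_n^+)=0$ in $L^2(\Omega)$; moreover the $\sigma_1$- and $\sigma_2$-terms force $\sj{v}=0$ and $\sj{a\nabla v}=0$ on $\Gamma_n\times I_n$, so $v$ is in fact continuous across spatial faces (together with its flux) on the whole slab, i.e. $v(\cdot,t)\in H^1_0(\Omega)$ for a.e.\ $t\in I_n$. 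Now use the Trefftz property: on each $K\times I_n$, $v$ solves $\ddot v-\nabla\cdot(a\nabla v)=0$, and globally on the slab $v$ is a genuine (strong, since polynomial) solution of the wave equation with homogeneous Dirichlet boundary data. With vanishing Cauchy data $v(\cdot,t_n^+)$ — note $\babla v(t_n^+)=0$ and $v\in H^1_0(\Omega)$ give $v(\cdot,t_n^+)=0$ — and $\dot v(\cdot,t_n^+)=0$, the uniqueness of solutions to the wave equation on $\Omega\times I_n$ (the energy argument: the continuous energy $\tfrac12\|\dot v\|^2+\tfrac12\|\sqrt a\nabla v\|^2$ is constant in $t$ and vanishes at $t_n^+$) yields $v\equiv 0$ on $\Omega\times I_n$. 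The argument for $\dgnorm{\cdot}$ on $\TspaceT$ is the slab-by-slab version of this: $\dgnorm{v}=0$ forces, via \eqref{eq:dg_norm_expr}, $E_h(t_0^+,v)=0$ hence $v(\cdot,t_0^+)=0$ and $\dot v(\cdot,t_0^+)=0$, plus $\sj v=\sj{a\nabla v}=0$ on every $\Gamma_n\times I_n$; uniqueness on the first slab gives $v\equiv0$ there, so in particular the temporal jump $\tj{\dot v(t_1)}$, $\tj{\babla v(t_1)}$, $\tj{\sj{v(t_1)}}$ all reduce to the traces from the second slab, and the vanishing of the jump-in-time energy terms in \eqref{eq:dg_norm_expr} then gives zero Cauchy data at $t_1^+$; induction over the slabs yields $v\equiv0$ on $\Omega\times[0,T]$.

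I expect the main obstacle to be the bookkeeping in the definiteness step, specifically extracting clean pointwise-in-time consequences from the vanishing of $E_h$: one must be careful that $E_h(t,v)$, which is only evaluated at the slab endpoints in \eqref{eq:a_nuu}, together with the jump and $\sigma_1,\sigma_2$ penalty terms, really does control the Cauchy data at $t_n^+$ \emph{and} the spatial continuity of $v$ \emph{throughout} the slab, the latter being what upgrades the piecewise-Trefftz function to an honest solution of the wave equation on $\Omega\times I_n$ to which a uniqueness theorem applies. A secondary subtlety is that the penalty weights $\sigma_0,\sigma_1,\sigma_2$ are strictly positive on the skeleton (from \eqref{eq:sigma_defn} and the stated choices, since $h$ and $\tau_n$ are finite and positive), which is exactly what is needed for "$\sigma_i$-term $=0$'' to imply "jump $=0$''; this should be stated explicitly. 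Everything else — symmetry of the diagonal quadratic form, the Cauchy--Schwarz/triangle-inequality step, and the inductive passage across time slabs — is routine given the identities \eqref{eq:a_nuu}, \eqref{eq:dg_norm_expr} and the coercivity bound \eqref{coercivity_inverse}.
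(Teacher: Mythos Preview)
Your proposal is correct and follows essentially the same approach as the paper: invoke the identity \eqref{eq:a_nuu} together with the energy lower bound \eqref{energy_positive}/\eqref{coercivity_inverse} to obtain nonnegativity, and for definiteness on the Trefftz space use the vanishing of the $\sigma_1,\sigma_2$ penalty terms to upgrade $v$ to a genuine solution of the homogeneous wave equation on the slab with zero Cauchy and boundary data, then appeal to uniqueness. Your argument is in fact more explicit than the paper's rather terse proof---in particular your care about the triangle inequality (via the symmetric part of $a_n$) and the slab-by-slab induction for $\dgnorm{\cdot}$ fills in details the paper leaves implicit.
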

\begin{proof}
Recalling \eqref{eq:a_nuu}  and using  \eqref{energy_positive}, we deduce that 
$\dgnorm{v}_n^2 \geq 0$ and is hence a semi-norm.

Suppose $\dgnorm{v}_n = 0$ for $v \in \Tspace$. Then, $a \nabla v$  and $v$ have no jumps across the space skeleton and hence $v$ is a weak solution of the homogeneous wave equation on $\Omega \times I_n$ with zero initial and boundary conditions. Uniqueness implies $v \equiv 0$ and hence that $\dgnorm{\cdot}_n$ is a norm on this space.

The analysis of $\dgnorm{\cdot}$ is similar recalling \eqref{eq:dg_norm_expr}, which shows that $\dgnorm{\cdot}$ is a semi-norm if the stabilization parameter is chosen correctly. Proceeding as in the first case, shows that it is in fact a norm on the Trefftz spaces.
\end{proof}

\begin{corollary}\label{cor:ex_un}
Under the conditions of the above proposition and with initial data $u_0 \in H^1_0(\Omega)$, $v_0 \in L^2(\Omega)$, the discrete system \eqref{eq:fullsys} with $X = \TspaceT$ has a unique solution.
\end{corollary}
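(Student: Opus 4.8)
The plan is to recognise \eqref{eq:fullsys}, with $X=\TspaceT$, as a square finite-dimensional linear system, so that existence of a solution is equivalent to uniqueness, and then to read off uniqueness directly from the fact that $\dgnorm{\cdot}$ is a norm on $\TspaceT$ (Proposition~\ref{prop:seminorms}). First I would note that $\TspaceT$ is finite-dimensional: each local space $\Tspace(K)$ has the finite dimension computed in Proposition~\ref{prop:trefftz_poly}, there are finitely many elements in each $\mathcal{T}_n$, and finitely many time-slabs. Consequently $a(\cdot,\cdot)$ restricted to $\TspaceT\times\TspaceT$ is a bilinear form on a finite-dimensional space and $\binit(\cdot)$ is a linear functional on it (well-defined given $v_0\in L^2(\Omega)$, $u_0\in H^1_0(\Omega)$; recall the two skeleton terms in \eqref{eq:rhsform_init} vanish when the initial data is continuous across $\Gamma_0$, as noted there).

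The core step is then immediate: by Proposition~\ref{prop:seminorms} the map $v\mapsto\dgnorm{v}=a(v,v)^{1/2}$ is a norm on $\TspaceT$, equivalently $a(\cdot,\cdot)$ is a symmetric positive-definite bilinear form there, hence an inner product. If $u_1,u_2\in\TspaceT$ both satisfy \eqref{eq:fullsys}, then $a(u_1-u_2,v)=0$ for all $v\in\TspaceT$; choosing $v=u_1-u_2$ yields $\dgnorm{u_1-u_2}^2=0$, so $u_1=u_2$ by the norm property. Since a square linear system with trivial kernel is invertible, a (unique) solution exists; equivalently, existence follows directly from the Riesz representation theorem applied to the inner product $a(\cdot,\cdot)$ and the functional $\binit(\cdot)$ on $\TspaceT$.

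To also record the practical time-stepping well-posedness of Definition~\ref{defn:time_stepping} with $X_n=\Tspace$, I would argue by induction on $n$ using the decomposition \eqref{eq:aform}: the $n=0$ problem \eqref{eq:timestep_initial}, namely $a_0(u^0,v)=\binit(v)$, is uniquely solvable because $\dgnorm{\cdot}_0$ is a norm on $\Tspace$ (Proposition~\ref{prop:seminorms}, via \eqref{eq:a_nuu} and \eqref{energy_positive}); given $u^{n-1}$, the problem \eqref{eq:timestep}, $a_n(u^n,v)=b_n(u^{n-1},v)$, is uniquely solvable by positive-definiteness of $a_n(\cdot,\cdot)$ on $\Tspace$; concatenating the $u^n$ produces a function solving \eqref{eq:fullsys}, which by the previous paragraph is the unique global solution.

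I do not expect any genuine obstacle here: the statement is a direct corollary of the norm property already established in Proposition~\ref{prop:seminorms}, combined with the finite dimensionality of $\TspaceT$. The only mild point of care is to check that $\binit$ is a well-defined linear functional on the discrete space, which is covered by the stated regularity of the initial data; everything else is elementary linear algebra.
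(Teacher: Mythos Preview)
Your argument is essentially the paper's own: uniqueness from the norm property of $\dgnorm{\cdot}$ on $\TspaceT$ (Proposition~\ref{prop:seminorms}), then existence by finite-dimensional linear algebra. One small slip: $a(\cdot,\cdot)$ is \emph{not} symmetric (look at the term $(\ddot u,\dot v)_{\Omega\times I_n}$ in \eqref{eq:lhsform}), so it is not an inner product and the Riesz representation alternative does not apply; your primary route via ``square system with trivial kernel is invertible'' is the correct one and is exactly what the paper uses.
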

\begin{proof}
  The uniqeness of the solution to \eqref{eq:fullsys} over the Trefftz space $X = \TspaceT$ follows from $a(\cdot,\cdot)$ being a norm on this space. Existence of the solution to the linear system follows from uniqueness.
\end{proof}

Next, we present the convergence analysis of the Trefftz based method.

\subsection{Convergence analysis}
We shall now establish the quasi-optimality of the proposed method.

\begin{proposition}
Let \our{$w \in \solnspace + \TspaceT$} and $v \in \TspaceT$, then 
\[
|a(w,v)| \leq C_\star \dgnorml{w}\dgnorm{v},
\]  
for some constant $C_\star > 0$ and 
\[
\begin{split}
\dgnorml{w}^2 =& \tfrac12\sum_{n = 1}^{N} \Big(\|\dot w(t_n^-)\|^2_{\Omega}+\|\sqrt{a}\nabla w(t_n^-)\|^2_{\Omega}
+\|\sqrt{\sigma_0}\sj{w(t_n^-)}\|^2_{\Gamma_n}+\|\sigma_0^{-1/2}\sa{a\nabla w(t_n^-)}\|^2_{\Gamma_n}\Big)\\
&+\sum_{n = 0}^{N-1}\left(\ltwo{\sqrt{\sigma_1}\sj{w}}{\Gamma_n \times I_n}^2
+\ltwo{\sqrt{\sigma_2}\sj{a\nabla w}}{\Gamma_n \times I_n}^2
+\ltwo{\sigma_2^{-1/2}\sa{\dot w}}{\Gamma_n^{\rm int} \times I_n}^2\right.\\
&\qquad\qquad+\left.\ltwo{\sigma_1^{-1/2} \sa{a\nabla \dot w}}{\Gamma_n \times I_n}^2+\ltwo{\sigma_0\sigma_1^{-1/2}\sj{\dot w}}{\Gamma_n \times I_n}^2\right).
\end{split}
\]
\end{proposition}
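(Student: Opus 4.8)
The plan is to expand $a(w,v)$ using the definition \eqref{eq:aform} together with \eqref{eq:lhsform} and \eqref{eq:rhsform}, collect the resulting terms, and apply Cauchy--Schwarz termwise, matching each factor against one of the pieces in $\dgnorml{\cdot}$ (for $w$) or $\dgnorm{\cdot}$ (for $v$). Since $w \in \solnspace + \TspaceT$ and $v \in \TspaceT$, on each space-time element $K \times I_n$ the function $\ddot w - \nabla\cdot(a\nabla w)$ vanishes (for the Trefftz part) or is the exact solution (for the $\solnspace$ part), so an elementwise integration by parts in space and in time is available; the idea is to use this to rewrite the volume terms $\lip{\ddot u}{\dot v}{\Omega\times I_n}$ and $\lip{a\babla u}{\babla\dot v}{\Omega\times I_n}$ so that the temporal volume integrals telescope into boundary-in-time contributions evaluated at $t_n^\pm$, and the spatial volume integrals reduce to skeleton integrals of the form $\lip{\sa{a\nabla w}}{\sj{\dot v}}{\Gamma_n\times I_n}$ and $\lip{\sj{w}}{\sa{a\nabla\dot v}}{\Gamma_n\times I_n}$. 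This is the step that eliminates the ``bulk'' contributions and leaves only skeleton- and time-slice-supported quantities, which is exactly what the two norms control.

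**Key steps, in order.** First, integrate by parts in time in $\lip{\ddot w}{\dot v}{\Omega\times I_n}$ and in the corresponding gradient term; combine with the $b_n$ terms and the $\lip{\dot w(t_n^+)}{\dot v(t_n^+)}{\Omega}$-type terms in $a_n$ so that all temporal contributions are expressed through values $w(t_n^-)$, $w(t_n^+)$, $v(t_n^\pm)$ at the time nodes, noting that for $v \in \TspaceT$ the temporal jump $\tj{v(t_n)}$ is itself controlled. Second, integrate by parts in space on each $K$, using the elementwise Trefftz/PDE property, to turn bulk gradient integrals into skeleton integrals; carefully reassemble jumps and averages using the standard dG magic formula $\sum_K \int_{\partial K} v^+ (a\nabla w)^+\cdot\nv^+ = \int_{\Gamma_n}(\sj{v}\cdot\sa{a\nabla w} + \sa{v}\sj{a\nabla w})$, and observe that $\sj{a\nabla w}$ appears paired against $\sa{\dot v}$, which is why the $\sigma_2^{-1/2}\sa{\dot w}$ and $\sigma_2$-terms are present in the two norms. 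Third, bound each of the remaining terms by Cauchy--Schwarz: consistency penalty terms $\lip{\sigma_i\sj{w}}{\sj{v}}{}$ split symmetrically as $\sqrt{\sigma_i}\sj{w}\cdot\sqrt{\sigma_i}\sj{v}$; the consistency flux terms $\lip{\sa{a\nabla w}}{\sj{\dot v}}{\Gamma_n\times I_n}$ split as $\sigma_0^{-1/2}\sa{a\nabla w}\cdot\sigma_0^{1/2}\sj{\dot v}$, but since $\dgnorm{v}$ only controls $\sqrt{\sigma_1}\sj{v}$ in space-time, one rewrites $\sigma_0^{1/2}\sj{\dot v} = (\sigma_0\sigma_1^{-1/2})\cdot(\sigma_1^{1/2}\sigma_0^{-1/2}\sigma_0^{1/2}\sj{\dot v})$ — i.e. one uses an inverse inequality in time to pass from $\sj{\dot v}$ to $\sj{v}$ at cost $\tau_n^{-1}$, which is precisely the $\tau_n$-scaling built into the definitions $\sigma_1 = C_a p^3/(h\tau_n)$ and $\sigma_2 = h/(C_a\tau_n)$. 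Assemble all the constants into a single $C_\star$ depending only on $c_a, C_a, c_{\mathcal T}, C_{\mathrm{inv}}$ and $C_{\sigma_0}$.

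**Main obstacle.** The delicate point is the mismatch between the ``test-with-$\dot v$'' structure of $a(\cdot,\cdot)$ and the fact that $\dgnorm{v}$, being $a(v,v)^{1/2}$, naturally controls $v$ (and its time-jumps and space-jumps) but not $\dot v$ directly on the skeleton. Every term in which a spatial flux or jump of $w$ is paired against $\sa{\dot v}$ or $\sj{\dot v}$ must therefore be routed through an inverse-type estimate in time on each slab $I_n$ — morally $\|\dot v\|_{I_n} \lesssim \tau_n^{-1}\|v\|_{I_n}$ for polynomials in time — and the exponents of $p$, $h$, and $\tau_n$ in $\sigma_1$ and $\sigma_2$ must be exactly the ones that make these estimates close with a $p$- and $h$-uniform constant (after also invoking the spatial inverse inequality $\|v\|_{\partial K}^2 \le C_{\mathrm{inv}} p^2 |\partial K|/|K|\,\|v\|_K^2$ already used in \eqref{coercivity_inverse}). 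Verifying that each of the five ``dual'' norms appearing in $\dgnorml{w}$ — namely $\sigma_0^{-1/2}\sa{a\nabla w(t_n^-)}$ on the time slices, and $\sigma_2^{-1/2}\sa{\dot w}$, $\sigma_1^{-1/2}\sa{a\nabla\dot w}$, $\sigma_0\sigma_1^{-1/2}\sj{\dot w}$ on the space-time skeleton — is matched by exactly one corresponding $\dgnorm{v}$-factor through such an inverse inequality is the bookkeeping heart of the proof; once the pairing table is set up, each individual estimate is routine.
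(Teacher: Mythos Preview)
Your overall plan (integrate by parts, use the Trefftz property to kill the volume terms, then Cauchy--Schwarz on the remaining skeleton and time-slice terms) is the right shape, but the step you flag as the ``main obstacle'' is based on a misreading of how the $\dot v$ terms are handled, and the remedy you propose would not match the stated $\dgnorml{w}$.

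The paper does \emph{not} use any temporal (or spatial) inverse inequality in this proof. After the first round of integration by parts in time on the volume terms and the spatial integration by parts (using that $\ddot v-\nabla\cdot a\nabla v=0$ elementwise for $v\in\TspaceT$), one is left with skeleton terms such as $-\lip{\sj{\dot w}}{\sa{a\nabla v}}{\Gamma_n\times I_n}$, together with the original skeleton terms of $a_n$ like $-\lip{\sa{a\nabla w}}{\sj{\dot v}}{\Gamma_n\times I_n}$ and $\lip{\sigma_0\sj{w}}{\sj{\dot v}}{\Gamma_n\times I_n}$. The key move is a \emph{second} integration by parts in time, now on these skeleton integrals: this turns $-\lip{\sj{\dot w}}{\sa{a\nabla v}}{}$ into $\lip{\sj{w}}{\sa{a\nabla\dot v}}{}$ plus time-boundary terms (which then cancels the symmetric term already in $a_n$), and turns $-\lip{\sa{a\nabla w}}{\sj{\dot v}}{}+\lip{\sigma_0\sj{w}}{\sj{\dot v}}{}$ into $\lip{\sa{a\nabla\dot w}}{\sj{v}}{}-\lip{\sigma_0\sj{\dot w}}{\sj{v}}{}$ plus time-boundary terms. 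In other words, the time derivative is moved from $v$ to $w$; this is precisely why $\dgnorml{w}$ carries $\sigma_1^{-1/2}\sa{a\nabla\dot w}$ and $\sigma_0\sigma_1^{-1/2}\sj{\dot w}$ on $\Gamma_n\times I_n$ rather than undotted versions. Once $a(w,v)$ is rewritten in this form (the paper's \eqref{eq:trefftz_bf}), every term pairs directly against a factor controlled by $\dgnorm{v}$ (namely $\sqrt{\sigma_1}\sj{v}$, $\sqrt{\sigma_2}\sj{a\nabla v}$, and the temporal-jump and endpoint energies) via plain Cauchy--Schwarz, with no inverse estimates at all.

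Your proposed route---keep $\sj{\dot v}$ and bound it by $\tau_n^{-1}\sj{v}$---fails to match the statement: the splitting $\sigma_0^{-1/2}\sa{a\nabla w}\cdot\sigma_0^{1/2}\sj{\dot v}$ leaves you with $\|\sigma_0^{-1/2}\sa{a\nabla w}\|_{\Gamma_n\times I_n}$ on the $w$-side, but that quantity does \emph{not} appear in $\dgnorml{w}$ (only the time-slice version $\|\sigma_0^{-1/2}\sa{a\nabla w(t_n^-)}\|_{\Gamma_n}$ does). So the bookkeeping you describe cannot close against the given norm. The fix is exactly the missing second temporal integration by parts on the skeleton; once you do that, the ``delicate point'' disappears and no inverse inequality is needed.
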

\begin{proof}
Integration by parts gives
\[
\begin{split}
 \lip{\ddot w}{ \dot v}{\Omega\times I_n} + \lip{a\nabla w}{ \nabla \dot v}{\Omega\times I_n} 
 =\  &-\lip{\dot w}{ \ddot v}{\Omega\times I_n} - \lip{a\nabla \dot w}{ \nabla  v}{\Omega\times I_n} \\
&+\lip{\dot w(t_{n+1}^-)}{ \dot v(t_{n+1}^-)}{\Omega}-\lip{\dot w(t_{n}^+)}{ \dot v(t_{n}^+)}{\Omega}\\
& + \lip{a\nabla w(t_{n+1}^-)}{ \nabla v(t_{n+1}^-)}{\Omega}-\lip{a\nabla w(t_n^+)}{ \nabla v(t_n^+)}{\Omega}\\
=\ &-\lip{\sj{\dot w}}{\sa{a \nabla v}}{\Gamma_n\times I_n}-\lip{\sa{\dot w}}{\sj{a \nabla v}}{\Gamma_n^{\text{int}}\times I_n}\\
&+\lip{\dot w(t_{n+1}^-)}{ \dot v(t_{n+1}^-)}{\Omega}-\lip{\dot w(t_{n}^+)}{ \dot v(t_{n}^+)}{\Omega}\\
& + \lip{a\nabla w(t_{n+1}^-)}{ \nabla v(t_{n+1}^-)}{\Omega}-\lip{a\nabla w(t_n^+)}{ \nabla v(t_n^+)}{\Omega},
\end{split}
\]
since $v\in \TspaceT$ and by using the (elementary) identity
\[
-\lip{a\nabla \dot w}{ \nabla  v}{\Omega\times I_n}  = \lip{\dot w}{ \nabla\cdot a\nabla  v}{\Omega\times I_n} 
-\lip{\sj{\dot w}}{\sa{a \nabla v}}{\Gamma_n\times I_n}-\lip{\sa{\dot w}}{\sj{a \nabla v}}{\Gamma_n^{\text{int}}\times I_n},
\]
in the second step. Further applications of integration by parts in time yield
\[
\begin{split}
&-\lip{\sj{\dot w}}{\sa{a \nabla v}}{\Gamma_n\times I_n} \\
=\ & \lip{\sj{ w}}{\sa{a \nabla \dot v}}{\Gamma_n\times I_n}-\lip{\sj{ w(t_{n+1}^-)}}{\sa{a \nabla  v(t_{n+1}^-)}}{\Gamma_n} +\lip{\sj{ w(t_{n}^+)}}{\sa{a \nabla  v(t_{n}^+)}}{\Gamma_n},
\end{split}
\]
and
\[
\begin{split}  
&-\lip{ \sa{a\nabla w} }{ \sj{\dot v}}{\Gamma_n\times I_n} +\lip{\sigma_0 \sj{w}}{ \sj{\dot v}}{\Gamma_n\times I_n}\\
= \ & \lip{\sa{a\nabla \dot w}}{ \sj{v}}{\Gamma_n\times I_n} -\lip{\sigma_0 \sj{\dot w}}{\sj{ v}}{\Gamma_n\times I_n} \\
&-\lip{ \sa{a\nabla w(t_{n+1}^-)}}{ \sj{ v(t_{n+1}^-)}}{\Gamma_n}
+\lip{ \sa{a\nabla w(t_{n}^+)}}{ \sj{ v(t_{n}^+)}}{\Gamma_n}\\
&+\lip{\sigma_0\sj{w(t_{n+1}^-)}}{ \sj{ v(t_{n+1}^-)}}{\Gamma_n}
-\lip{\sigma_0 \sj{w(t_{n}^+)}}{ \sj{ v(t_{n}^+)}}{\Gamma_n}.
\end{split}
\]
Substituting these into% the definition of $a_n(\cdot,\cdot)$ 
\eqref{eq:lhsform}, we obtain
\begin{equation}
  \label{eq:trefftz_bfn}  
  \begin{aligned}    
  a_n(w,v) =& \ 
    \lip{\sa{a\nabla \dot w}}{\sj{ v}}{\Gamma_n \times I_n}- \lip{\sa{a\nabla w(t_{n+1}^-)}}{\sj{ v(t_{n+1}^-)}}{\Gamma_n} \\
&- \lip{\sigma_0   \sj{\dot w}}{\sj{v}}{\Gamma_n \times I_n}+ \lip{\sigma_0  \sj{w(t_{n+1}^-)}}{\sj{ v(t_{n+1}^-)}}{\Gamma_n}\\
&- \lip{ \sa{\dot w}}{ \sj{a \nabla v}}{\Gamma_n^{\rm int}\times I_n}- \lip{\sj{w(t_{n+1}^-)}}{\sa{a\nabla  v(t_{n+1}^-)}}{\Gamma_n} \\
&+ \lip{\dot w(t^-_{n+1})}{\dot v(t_{n+1}^-)}{\Omega} + \lip{a\nabla w(t_{n+1}^-)}{ \nabla v(t_{n+1}^-)}{\Omega}\\
&
+ \lip{\sigma_1  \sj{w}}{\sj{ v}}{\Gamma_n \times I_n}
+ \lip{\sigma_2  \sj{a\nabla w}}{\sj{a \nabla v}}{\Gamma_n \times I_n}.
  \end{aligned}
\end{equation}
Therefore, upon adopting the notational convention 
$
\tj{f(t_N^-)} := f(t_N^-), 
$
we have
\begin{equation}
  \label{eq:trefftz_bf}  
\begin{split}
  a(w,v) =& \sum_{n = 0}^{N-1}a_n(w,v) - \sum_{n = 1}^{N-1}b_n(w,v)\\
=&  \sum_{n = 0}^{N-1}\Big( \lip{\sa{a\nabla\dot  w}}{ \sj{ v}}{\Gamma_n\times I_n}
- \lip{\sigma_0   \sj{\dot w}}{ \sj{v}}{\Gamma_n\times I_n} 
- \lip{ \sa{\dot w}}{ \sj{a\nabla v}}{\Gamma_n^{\rm int}\times I_n}  \\
&+ \lip{\sigma_1  \sj{w}}{ \sj{ v}}{\Gamma_n \times I_n}
+\lip{\sigma_2  \sj{a\nabla w}}{ \sj{a \nabla v}}{\Gamma_n \times I_n}\Big)\\
%&+ \lip{\dot w(t^-_N)}{\dot v(t_N^-)}{\Omega} + \lip{a\nabla_{N-1} w(t_N^-)}{ \nabla_{N-1} v(t_N^-)}{\Omega}\\
%&- \lip{\sa{a\nabla w(t_N^-)}}{\sj{ v(t_N^-)}}{\Gamma_{N-1}} 
%- \lip{ \sj{w(t_N^-)}}{ \sa{a\nabla  v(t_N^-)}}{\Gamma_{N-1}}\\
%&+ \lip{\sigma_0  \sj{w(t_N^-)}}{\sj{ v(t_N^-)}}{\Gamma_{N-1}}\\
&- \sum_{n = 1}^{N}\Big(\lip{ \dot w(t_n^-)}{ \tj{\dot v(t_n)}}{\Omega} + \lip{a\nabla w(t_n^-)}{ \tj{\nabla v(t_n)}}{\Omega}\\
&- \lip{\sa{a\nabla w(t_n^-)}}{\tj{\sj{ v(t_n)}}}{\Gamma_n} 
- \lip{ \sj{w(t_n^-)}}{\tj{\sa{a\nabla  v(t_n)}}}{\Gamma_n}\\
&+ \lip{\sigma_0  \sj{w(t_n^-)}}{\tj{\sj{ v(t_n)}}}{\Gamma_n}\Big).
\end{split}
\end{equation}

It is now clear how to estimate most of the terms to obtain the stated result using the Cauchy-Schwarz inequality. 
The first two terms on the right hand side in the above sum are estimated as follows
\[
 \lip{ \sigma_1^{-1/2}(\sa{a\nabla\dot  w}- \sigma_0   \sj{\dot w})}{\sqrt{\sigma_1}\sj{ v}}{\Gamma_n \times I_n}
\leq \ltwo{\sigma_1^{-1/2}(\sa{a\nabla\dot  w}-  \sigma_0 \sj{\dot w})}{\Gamma_n \times I_n}
\ltwo{\sqrt{\sigma_1} \sj{v}}{\Gamma_n \times I_n};
\]
for the third term, we have 
\[
\begin{split}
\lip{ \sa{\dot w}}{ \sj{a\nabla v}}{\Gamma_n^{\rm int}\times I_n}   
& \le \ltwo{\sigma_2^{-1/2} \sa{\dot w}}{\Gamma_n^{\rm int} \times I_n}   \ltwo{\sqrt{\sigma_2} \sj{a\nabla v}}{\Gamma_n^{\rm int} \times I_n}.
\end{split}
\]
%making use of the inverse estimate \eqref{eq:disc_ineq}, and upon choosing $\rho = C_{\rho}p^2 h^{-1}$, with $C_{\rho}>0$ appropriate positive constant. Also we choose $16\tau \le \sigma$ (or better $C_{\sigma}$ large enough so that $16\tau \le \sigma$) and the proof follows.
\end{proof}

\begin{remark}\label{rem:cheaper}
 Note that \eqref{eq:trefftz_bfn} shows that for Trefftz functions the bilinear form can be evaluated without computing integrals over the volume terms $\Omega \times I_n$. This can bring considerable savings, especially in higher spatial dimensions; see Figure~\ref{fig:Conv_Time}.
\end{remark}

\reply{
It is possible to define the space-time dG method above with the classical (discontinuous) space of \emph{all} polynomials of degree $p$ (total degree or of degree $p$ on each variable). The resulting method then appears to work in practice also. Some numerical experiments and comparison with the smaller polynomial Trefftz space are given Section \ref{sec:numerics}. The error analysis of the method with the full polynomial space of degree $p$ is not straightforward, though. In particular, it is not immediately clear how to treat the volume terms which do not vanish in this case. This would be essential in completing an error analysis for such spaces also.}

\begin{theorem}\label{th:best_approx}
  Let $U \in \TspaceT$ be the discrete solution of the Trefftz time-space discontinuous Galerkin method and let \our{$u \in \solnspace$} be the exact solution. Then, we have
\[
%\dgnorm{U-u} \leq (  C_\star+1) \inf_{V \in \TspaceT}  \dgnorml{V - u}%+\dgnorm{V-u} \big)
\dgnorm{U-u} \leq\inf_{V \in \TspaceT}\big (  C_\star \dgnorml{V - u}+\dgnorm{V-u} \big). 
\]
%Further, if $\dspaceT{1} \subseteq \TspaceT$ then

%{\em \Large Missing estimate for linears}
\end{theorem}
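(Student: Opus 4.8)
This is a standard Céa-type quasi-optimality argument, built on Galerkin orthogonality. The plan is to exploit consistency (Theorem~\ref{thm:const_stab}, statement~\ref{thm:consistency}), which gives $a(u,v)=\binit(v)$ for all $v\in\TspaceT$, together with the discrete equation $a(U,v)=\binit(v)$, to obtain the orthogonality relation $a(U-u,v)=0$ for all $v\in\TspaceT$. Then, for an arbitrary $V\in\TspaceT$, I would insert $\pm V$ and split
\[
\dgnorm{U-u}^2 = a(U-u,U-u) = a(U-u,U-V) + a(U-u,V-u).
\]
The first term vanishes by Galerkin orthogonality since $U-V\in\TspaceT$. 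For the second term, I would apply the boundedness estimate from the Proposition immediately preceding the theorem: with $w=V-u\in\solnspace+\TspaceT$ and test function $U-u$—but here one must be slightly careful, because that Proposition requires the \emph{second} argument to lie in $\TspaceT$, which $U-u$ does not. So instead I would write $a(U-u,V-u) = a(V-u,U-u)$ is not symmetric; rather I should keep the ordering as $a(U-u,V-u)$ and note this is $-a(V-u, \cdot)$ in the wrong slot. The correct route: rewrite using $a(U-u,U-u)=a(U-u,U-V)+a(U-u,V-u)$, observe $a(U-u,U-V)=0$, and then for $a(U-u,V-u)$ use that $U-V\in\TspaceT$ again is the bounded slot—i.e. write $a(U-u,V-u) = a(U-V,V-u) + a(V-u,V-u)$.

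Carrying this out: since $U-V\in\TspaceT$, the Proposition gives $|a(U-V,V-u)|\le C_\star\dgnorml{V-u}\dgnorm{U-V}$, where I apply it with first argument $w=V-u\in\solnspace+\TspaceT$ and second argument the Trefftz function $U-V$. For the term $a(V-u,V-u)$, the same Proposition does not directly apply (second slot not Trefftz); instead I would bound $|a(V-u,V-u)|\le C_\star\dgnorml{V-u}\,\dgnorm{V-u}$ only if $V-u\in\TspaceT$, which fails. The cleaner alternative is to not expand $a(U-u,V-u)$ at all but to observe $a(U-u,V-u)$ with $V-u$ the \emph{second} argument is still problematic. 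Therefore the genuinely clean argument is: from orthogonality $\dgnorm{U-u}^2 = a(U-u,V-u)$ for \emph{any} $V\in\TspaceT$; now split once more $a(U-u,V-u) = a(U-V,V-u)+a(V-u,V-u)$. The first is bounded by $C_\star\dgnorml{V-u}\dgnorm{U-V}$ via the Proposition (Trefftz second slot $U-V$). For the second, $\dgnorm{\cdot}$ is a seminorm on all of $\dspaceT{p}$ and one checks directly from \eqref{eq:dg_norm_expr} that $a(V-u,V-u)\le \dgnorml{V-u}^2$—that is, the $\star$-norm dominates the diagonal of the bilinear form—or more simply $a(V-u,V-u)=\dgnorm{V-u}^2$ whenever $V-u$ lies in the space where $a(\cdot,\cdot)$ is the square of $\dgnorm{\cdot}$; but $u\notin\TspaceT$.

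Given these subtleties, the step I expect to be the main obstacle is precisely the bookkeeping of which arguments are allowed in which slot of the non-symmetric bound $|a(w,v)|\le C_\star\dgnorml{w}\dgnorm{v}$. The resolution used in the paper is almost certainly: write $\dgnorm{U-u}^2 = a(U-u,U-u)$, use $a(U-u,\cdot)=0$ on $\TspaceT$ to replace one copy, i.e. $\dgnorm{U-u}^2 = a(U-u,U-u) = a(U-u,(U-V)+(V-u)) = a(U-u,V-u)$; then insert $U-V$ into the \emph{first} slot via orthogonality in reverse is not available. The actual clean identity is $\dgnorm{U-u}^2 = a(U-u,V-u)$ and then, using $a(w,v)=a(w,v)$ with $w=U-u$—here $w\notin\TspaceT$ but $w\in\solnspace+\TspaceT$? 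No, $U-u\in\solnspace+\TspaceT$ indeed—and $v=V-u\notin\TspaceT$. So one more decomposition: $a(U-u,V-u)=a((U-V)+(V-u),V-u)$; apply the Proposition to $a(U-V,V-u)$ (first arg in $\solnspace+\TspaceT$, second arg $V-u$—still not Trefftz!). I therefore conclude the paper must use symmetry-in-spirit: apply the Proposition as $|a(V-u,U-V)|\le C_\star\dgnorml{V-u}\dgnorm{U-V}$ after first establishing, via the energy identity \eqref{eq:dg_norm_expr} and the integration-by-parts form \eqref{eq:trefftz_bf}, a companion bound with the Trefftz function in the \emph{first} argument; then triangle inequality $\dgnorm{U-u}\le\dgnorm{U-V}+\dgnorm{V-u}$ finishes it, with $\dgnorm{U-V}\le C_\star\dgnorml{V-u}$ emerging from $\dgnorm{U-V}^2=a(U-V,U-V)=a(U-V,U-u)+a(U-V,V-u)=a(U-V,V-u)\le C_\star\dgnorml{V-u}\dgnorm{U-V}$, using orthogonality $a(U-u,U-V)=a(U-V,U-u)=0$ (valid since $\dgnorm{\cdot}$ comes from a symmetric-enough form on these arguments, or since $U-V\in\TspaceT$ is the test function in \eqref{eq:fullsys} and consistency applies to $u$). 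Dividing by $\dgnorm{U-V}$ gives $\dgnorm{U-V}\le C_\star\dgnorml{V-u}$, hence $\dgnorm{U-u}\le C_\star\dgnorml{V-u}+\dgnorm{V-u}$, and taking the infimum over $V\in\TspaceT$ yields the claim.
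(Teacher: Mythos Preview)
Your overall strategy is right and matches the paper's: Galerkin orthogonality, then the continuity bound from the preceding Proposition, then the triangle inequality. You also correctly identify the key subtlety, namely which slot of the non-symmetric bound must hold the Trefftz function. But in your final execution you decompose the \emph{second} argument,
\[
\dgnorm{U-V}^2=a(U-V,U-V)=a(U-V,U-u)\pm a(U-V,V-u),
\]
and then need both $a(U-V,U-u)=0$ and $|a(U-V,V-u)|\le C_\star\dgnorml{V-u}\,\dgnorm{U-V}$. Neither is available. Galerkin orthogonality reads $a(U-u,v)=0$ for $v\in\TspaceT$: the error sits in the \emph{first} slot and the Trefftz test function in the second, so $a(U-V,U-u)=0$ does not follow (the form is not symmetric and no adjoint consistency is proved). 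Likewise the Proposition requires the \emph{second} argument to be in $\TspaceT$, which $V-u$ is not, so your bound on $a(U-V,V-u)$ is unjustified.

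The fix, which is exactly what the paper does, is to decompose the \emph{first} slot instead:
\[
\dgnorm{V-U}^2=a(V-U,V-U)=a(V-u,V-U)-a(U-u,V-U)=a(V-u,V-U),
\]
using Galerkin orthogonality with test function $V-U\in\TspaceT$; then apply the Proposition with $w=V-u\in\solnspace+\TspaceT$ and $v=V-U\in\TspaceT$ to obtain $\dgnorm{V-U}\le C_\star\dgnorml{V-u}$, and conclude via $\dgnorm{U-u}\le\dgnorm{V-U}+\dgnorm{V-u}$. No companion ``Trefftz-in-first-slot'' bound is needed.
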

\begin{proof}
By Galerkin orthogonality
\[
a(V -U, v) = a(V-u,v),
\]
for any $V,v \in \TspaceT$. Hence,
\[
\dgnorm{V - U}^2 = a(V - U, V - U)
 = a(V - u, V - U) \leq C_\star \dgnorml{V - u}\dgnorm{V - U},
\]
giving
\[
\dgnorm{U-u} \leq \dgnorm{V-U}+\dgnorm{V-u} \leq  C_\star\dgnorml{V - u}+\dgnorm{V-u}.
\]
%Standard error estimates complete the proof.
\end{proof}

To conclude this section we show that in the case of Trefftz polynomials, the discrete norm can be bounded below by an $L^2$-temporal norm.  For simplicity of the presentation \emph{only}, we shall, henceforth, make the following assumption.
 \our{
\begin{assumption} \label{as:st_reg}
We assume that $\diam(K\times I_n)/\rho_{K\times I_n} \leq c_{\mathcal T}$, for all $K \in \mathcal{T}_n$, $n = 0,1,\dots, N-1$.
\end{assumption}}

\our{In the simplifying context of space-time meshes consisting of prismatic meshes without local time-stepping, Assumption \ref{as:st_reg} implies global quasi-uniformity. Given the tensor-product/pris\-matic structure of the space-time elements $K\times I_n$, it is by all means possible to extend the error analysis below to space-time meshes not satisfying Assumption \ref{as:st_reg}. Moreover, with minor modifications only, it is possible to extend the findings of this work to meshes with space-time elements with variable temporal dimension lengths; the notational overhead for such a development is deemed excessive given the a-priori error analysis point of view of this work.}

\begin{proposition}\label{prop:positiveE}
  For any $v \in \Tspace$ it holds
\[
\ltwo{\dot v}{\Omega \times I_n}^2+\ltwo{\sqrt{a}\babla v}{\Omega \times I_n}^2 
\leq (t_{n+1}-t_n)e^{\widetilde C(t_{n+1}-t_n)/\underline{h}}\left(\ltwo{\dot v(t_n^+)}{ \Omega}^2+\ltwo{\sqrt{a} \babla v(t_n^+)}{ \Omega}^2 \right),
\]
where
$
\reply{\widetilde C} := c_{\mathcal T}C_{\text{inv}}p^2 C_a$ and
$ \underline{h} := \min_{x \in \Omega} h(x,t)$, $t \in I_n$.
The same estimate holds with $t_n^+$ replaced by $t_{n+1}^-$.

% Further, if $u$ solves the wave equation \eqref{eq:wave} 
% then
% \[
% \ltwo{\dot u}{\Omega \times I_n}^2+\ltwo{\sqrt{a}\babla u}{\Omega \times I_n}^2 
% = (t_{n+1}-t_n)\left( \ltwo{\dot u(t_n^+)}{ \Omega}^2 + \ltwo{\sqrt{a} \babla u(t_n^+)}{ \Omega}^2\right),
% \]
% where again the same  holds with $t_n^+$ replaced by $t_{n+1}^-$.
\reply{Consequently, under Assumption ~\ref{as:st_reg},
\[
\|\dot V\|^2_{\Omega \times (0,T)}+\|\sqrt{a}\babla V\|^2_{\Omega \times (0,T)}
\leq C e^{\widetilde C c_{\mathcal T}}T\dgnorm{V}^2,
\]
for all $V \in \TspaceT$ with a constant $C>0$ independent of the meshsize.}
\end{proposition}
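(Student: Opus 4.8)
The plan is to prove the element-wise exponential estimate first and then sum over the time-slabs. On a fixed slab $I_n$, fix an element $K\in\mathcal{T}_n$ and consider $\varphi(t):=\tfrac12\|\dot v(t)\|_K^2+\tfrac12\|\sqrt{a}\nabla v(t)\|_K^2$ for $v\in\Tspace$. Since $v$ satisfies the homogeneous wave equation pointwise on $K\times I_n$, integration by parts in space on $K$ gives
\[
\varphi'(t)=(\ddot v,\dot v)_K+(a\nabla v,\nabla \dot v)_K=(\ddot v-\nabla\cdot(a\nabla v),\dot v)_K+\int_{\partial K} a\,\partial_{\nv}v\,\dot v\,\ud s=\int_{\partial K}a\,\partial_{\nv}v\,\dot v\,\ud s,
\]
so the only growth comes from the boundary term. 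First I would bound $|\varphi'(t)|$ by $\int_{\partial K}|a\,\partial_{\nv}v||\dot v|\,\ud s$, apply Cauchy--Schwarz on $\partial K$, then the inverse inequality $\|w\|_{\partial K}^2\le C_{\text{inv}}p^2|\partial K|/|K|\,\|w\|_K^2$ together with the mesh-regularity bound $|\partial K|/|K|\le c_{\mathcal T}/h_K$ to each of $a\,\partial_{\nv}v$ and $\dot v$. Using $0<a<C_a$ this yields $|\varphi'(t)|\le c_{\mathcal T}C_{\text{inv}}p^2 C_a h_K^{-1}\big(\|\dot v(t)\|_K^2+\|\sqrt{a}\nabla v(t)\|_K^2\big)=2\widetilde C h_K^{-1}\varphi(t)$ with $\widetilde C=c_{\mathcal T}C_{\text{inv}}p^2 C_a$ (tracking the constant $a$-weighting on the gradient term via $0<c_a<a<C_a$ is the only mildly delicate book-keeping). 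Grönwall's inequality on $I_n$ then gives $\varphi(t)\le e^{2\widetilde C(t-t_n)/h_K}\varphi(t_n^+)$, and integrating in $t$ over $I_n$, bounding $h_K$ below by $\underline h$, and summing over $K\in\mathcal{T}_n$ produces exactly the stated slab estimate; running Grönwall backward from $t_{n+1}^-$ gives the variant with $t_{n+1}^-$.

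For the global bound under Assumption~\ref{as:st_reg}, I would note that space-time shape regularity forces $\tau_n\le c_{\mathcal T}\,\overline h_n$ (the slab height is comparable to the spatial meshsize), so the exponent $\widetilde C(t_{n+1}-t_n)/\underline h=\widetilde C\tau_n/\underline h$ is bounded by $\widetilde C c_{\mathcal T}$ up to the local mesh-uniformity constant. Hence each slab estimate reads
\[
\|\dot V\|_{\Omega\times I_n}^2+\|\sqrt{a}\babla V\|_{\Omega\times I_n}^2\le C e^{\widetilde C c_{\mathcal T}}\,\tau_n\big(\|\dot V(t_n^+)\|_\Omega^2+\|\sqrt{a}\babla V(t_n^+)\|_\Omega^2\big)\le C e^{\widetilde C c_{\mathcal T}}\,\tau_n\cdot 2E_h(t_n^+,V),
\]
using the energy positivity \eqref{energy_positive} from Theorem~\ref{thm:const_stab} to replace the right-hand side by $E_h(t_n^+,V)$. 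Summing over $n=0,\dots,N-1$, bounding $\tau_n\le T$ (or more carefully $\sum_n\tau_n=T$ after pulling out a uniform factor), and recognising from \eqref{eq:a_nuu} that $\sum_n E_h(t_n^+,V)\le \sum_n a_n(V,V)$ — and hence, after accounting for the subtracted $b_n$ terms via the clean identity \eqref{eq:dg_norm_expr}, that the sum of the slab energies is controlled by $\dgnorm{V}^2$ — yields the claimed inequality $\|\dot V\|_{\Omega\times(0,T)}^2+\|\sqrt a\babla V\|_{\Omega\times(0,T)}^2\le Ce^{\widetilde C c_{\mathcal T}}T\dgnorm{V}^2$.

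The main obstacle I anticipate is the last bookkeeping step: passing from $\sum_n E_h(t_n^+,V)$ to $\dgnorm{V}^2=a(V,V)$. From \eqref{eq:a_nuu} one has $a_n(V,V)\ge E_h(t_n^+,V)$ (dropping the nonnegative penalty and the nonnegative $E_h(t_{n+1}^-,V)$ terms), but $a(V,V)=\sum_n a_n(V,V)-\sum_n b_n(V,V)$ has the indefinite $b_n$ contributions subtracted, so one cannot simply discard them. The honest route is to use the telescoped identity \eqref{eq:dg_norm_expr}, in which the right-hand side is a sum of manifestly nonnegative terms plus the indefinite cross terms $-(\tj{\sa{a\babla V(t_n)}},\tj{\sj{V(t_n)}})_{\hGamman}$; those are absorbed exactly as in \eqref{coercivity_inverse} using the choice \eqref{eq:Cstab_ineq} of $C_{\sigma_0}$, so that $a(V,V)\ge c\big(E_h(t_N^-,V)+E_h(t_0^+,V)+\sum_{n\ge1}(\tfrac12\|\tj{\dot V(t_n)}\|_\Omega^2+\tfrac14\|\sqrt a\tj{\babla V(t_n)}\|_\Omega^2)\big)$. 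One then reconstructs $E_h(t_n^+,V)$ for intermediate $n$ from $E_h(t_0^+,V)$ plus a telescoping sum of the jump contributions (together with the spatial penalty terms, all nonnegative), concluding $\sum_n E_h(t_n^+,V)\le C\,a(V,V)$ with $C$ possibly depending on $N$ in the worst case but, under Assumption~\ref{as:st_reg} with a bounded number of slabs per unit time, absorbed into the factor $T$; making this constant genuinely $N$-independent is the one point that requires care and is where I would spend most of the effort.
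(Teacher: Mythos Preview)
Your argument for the slab estimate is essentially identical to the paper's: define the local energy $\varphi_K(t)=\tfrac12\|\dot v(t)\|_K^2+\tfrac12\|\sqrt a\nabla v(t)\|_K^2$, differentiate, use the Trefftz property to reduce to the element-boundary term $\int_{\partial K}a\,\partial_{\nv}v\,\dot v$, bound it by the discrete trace inequality, apply Gr\"onwall, integrate over $I_n$, and sum over $K$. The only difference is harmless constant tracking (the paper writes $\varphi'\le\widetilde C h_K^{-1}\varphi$ directly, you carry a factor~2).

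For the global bound the paper offers exactly one sentence: ``the final inequality follows from the definition of the discrete dG norm.'' You have looked more carefully and put your finger on a genuine difficulty that the paper does not address. The identity \eqref{eq:dg_norm_expr} displays only the \emph{endpoint} energies $E_h(t_0^+,V)$, $E_h(t_N^-,V)$ together with the temporal-jump and penalty terms; it does not directly control the \emph{intermediate} quantities $\|\dot V(t_n^+)\|_\Omega^2+\|\sqrt a\,\babla V(t_n^+)\|_\Omega^2$ that the slab estimate produces. Your telescoping idea --- rebuild $E_h(t_n^+,V)$ from $E_h(t_0^+,V)$ plus accumulated jump contributions --- does indeed introduce a factor growing with~$N$, and your caveat about this is well founded: a simple one-dimensional test with a single spatial element and $V|_{I_n}$ chosen so that the slab energies rise linearly and then fall linearly in~$n$ shows that $\max_n E_h(t_n^+,V)$ (and in fact $\sum_n\tau_n E_h(t_n^+,V)$) can exceed $a(V,V)$ by a factor of order~$N$ when the spatial penalty terms are absent. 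So the step ``$\sum_n E_h(t_n^+,V)\le C\,a(V,V)$ with $C$ independent of $N$'' is not available in the bare form you (and implicitly the paper) invoke; whatever makes the stated bound true must use the spatial penalty contributions in $\dgnorm{\cdot}$ in an essential way, and neither the paper's one-liner nor your proposal supplies that argument.
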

\begin{proof}
Note that for an element $K$ with exterior normal $\nu$
\[
\begin{split}  
\frac{d}{dt}  \left(\tfrac12\ltwo{\dot v(t)}{K}^2+\tfrac12 \ltwo{\sqrt{a} \nabla v(t)}{K}^2\right)
 &= \lip{\ddot v(t)}{ \dot v(t)}{K} +  \lip{a\nabla v(t)}{ \nabla \dot v(t)}{K}\\
&= \lip{\nu \cdot a \nabla  v(t)}{\dot v(t)}{\partial K}\\
%&\leq \ltwo{\partial_\nu v(t)}{\partial K} \ltwo{\dot v(t)}{\partial K}\\
&\leq \tfrac12 \ltwo{\nu \cdot a \nabla  v(t)}{\partial K}^2 +\tfrac12 \ltwo{\dot v(t)}{\partial K}^2\\
&\leq  C_{\text{inv}}p^2 |\partial K|/|K|\left(\tfrac12\ltwo{a \nabla v(t)}{ K}^2+ \tfrac12\ltwo{\dot v(t) }{ K}^2 \right),\\
&\leq C_a C_{\text{inv}}p^2 c_{\mathcal T} h_K^{-1}\left(\tfrac12\ltwo{\sqrt{a}\nabla v(t)}{ K}^2+ \tfrac12\ltwo{\dot v(t) }{ K}^2 \right),
\end{split}
\]
where we have used the discrete trace inequality.
Gronwall inequality now gives us 
\[
\tfrac12\ltwo{\sqrt{a}\nabla v(t)}{K}^2+ \tfrac12\ltwo{\dot v(t)}{ K}^2 
\leq  e^{\widetilde C(t-t_n)/{h_K}}\left(\tfrac12\ltwo{\sqrt{a}\nabla v(t_n^+)}{ K}^2+ \tfrac12\ltwo{\dot v(t_n^+)}{ K}^2 \right),
\]
as well as
\[
\tfrac12\ltwo{\sqrt{a}\nabla v(t)}{K}^2+ \tfrac12\ltwo{\dot v(t)}{ K}^2 
\leq  e^{\widetilde C(t_{n+1}-t_n)/h_K}\left(\tfrac12\ltwo{\sqrt{a}\nabla v(t_{n+1}^-)}{ K}^2+ \tfrac12\ltwo{\dot v(t_{n+1}^-)}{ K}^2 \right),
\]
for all $t \in [t_n,t_{n+1}]$.  Integrating in time and summing over all $K$ gives the required result. \reply{The final inequality follows from the definition of the discrete dG norm.}
% \[
% \|\nabla u\|^2_{L^2( K \times I_n)}+ \|\dot u \|^2_{L^2( K \times I_n)} 
% \leq  \frac{h}C (e^{C (t_{n+1}-t_n)/h}-1)\left(\|\nabla u(t_n^+)\|^2_{L^2( K)}+ \|\dot u(t_n^+) \|^2_{L^2( K)} \right).
% \]
% Since
% \[
% a_n(v,v) = E(t_{n+1}^-,v)+E(t_{n}^+,v).
% \]
% the above lower bound follows
% Similarly 
% \[
% \|\nabla u(t)\|^2_{L^2( K)}+ \|\dot u \|^2_{L^2( K)} 
% \leq  e^{C(t_{n+1}-t_n)/h}\left(\|\nabla u(t_{n+1}^-)\|^2_{L^2( K)}+ \|\dot u(t_{n+1}^-) \|^2_{L^2( K)} \right).
% \]
\end{proof}

The above two results allow us to conclude that we can also bound the error in a more standard norm.
\begin{corollary}\label{cor:positiveE}
Under the hypothesis of Theorem~\ref{th:best_approx} and under Assumption \ref{as:st_reg}, we have
\[
\begin{split}  
\ltwo{\dot U -\dot u}{\Omega \times [0,T]}+\ltwo{\sqrt{a}\babla (U-u)}{\Omega \times [0,T]} 
\leq C &\inf_{V \in \TspaceT}\big (  C_\star \dgnorml{V - u}\\ &+\ltwo{\dot V -\dot u}{\Omega \times [0,T]}+\ltwo{\sqrt{a}\babla (V-u)}{\Omega \times [0,T]} \big),
\end{split}
\]
for some positive constant $C$, independent of the mesh parameters and of $u$ and $U$. 
\end{corollary}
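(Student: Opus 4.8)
The plan is to combine the quasi-optimality bound of Theorem~\ref{th:best_approx} with the norm equivalence of Proposition~\ref{prop:positiveE}. First I would observe that, by the triangle inequality,
\[
\ltwo{\dot U -\dot u}{\Omega \times [0,T]}+\ltwo{\sqrt{a}\babla (U-u)}{\Omega \times [0,T]}
\le \big(\ltwo{\dot U -\dot V}{\Omega \times [0,T]}+\ltwo{\sqrt{a}\babla (U-V)}{\Omega \times [0,T]}\big)
+ \big(\ltwo{\dot V -\dot u}{\Omega \times [0,T]}+\ltwo{\sqrt{a}\babla (V-u)}{\Omega \times [0,T]}\big),
\]
for any $V \in \TspaceT$. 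The second parenthesis is already of the form appearing on the right-hand side of the claimed estimate, so it suffices to control the first.

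For the first parenthesis, note that $U - V \in \TspaceT$, since both are Trefftz functions, so Proposition~\ref{prop:positiveE} applies. It gives
\[
\ltwo{\dot{(U-V)}}{\Omega \times [0,T]}^2+\ltwo{\sqrt{a}\babla(U-V)}{\Omega \times [0,T]}^2
\le C e^{\widetilde C c_{\mathcal T}} T \, \dgnorm{U-V}^2,
\]
under Assumption~\ref{as:st_reg}. Next I would bound $\dgnorm{U-V}$ by $\dgnorm{U-u} + \dgnorm{V-u}$ using the triangle inequality for the dG norm (valid since, by Proposition~\ref{prop:seminorms}, $\dgnorm{\cdot}$ is a norm on $\TspaceT$), and then invoke Theorem~\ref{th:best_approx} to replace $\dgnorm{U-u}$ by $C_\star\dgnorml{V-u} + \dgnorm{V-u}$ (for the same $V$, or after taking the infimum; since the statement is an infimum over $V$ it is cleanest to fix an arbitrary $V$ and only pass to the infimum at the very end). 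Thus the first parenthesis is bounded by a constant times $\dgnorml{V-u} + \dgnorm{V-u}$. The remaining task is to absorb the stray $\dgnorm{V-u}$ term: here one uses the reverse direction, bounding $\dgnorm{V-u}$ in terms of the standard norm. Looking at the definition of $\dgnorm{\cdot}$ via \eqref{eq:dg_norm_expr} and $E_h$, every term in $\dgnorm{V-u}^2$ is either a spatial/temporal $L^2$-norm of $\dot{(V-u)}$ or $\sqrt{a}\babla(V-u)$ at time slices (controlled by the volume norms via Proposition~\ref{prop:positiveE} applied to $V-u$ — or more directly, the skeleton/jump contributions are dominated by $\dgnorml{V-u}$ by inspection of the definition of $\dgnorml{\cdot}$) or a penalty term which already appears in $\dgnorml{V-u}$. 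Collecting everything and finally taking the infimum over $V \in \TspaceT$ yields the claim.

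The main obstacle I anticipate is the bookkeeping needed to show $\dgnorm{V-u} \lesssim \dgnorml{V-u} + (\text{standard norm of } V-u)$; one must check term by term in \eqref{eq:dg_norm_expr} that the interface/temporal-jump contributions not already present in $\dgnorml{\cdot}$ (in particular the $E_h(t_N^-,\cdot)$ and $E_h(t_0^+,\cdot)$ pieces, which involve the indefinite term $-\lip{\sa{a\nabla(\cdot)}}{\sj{(\cdot)}}{\Gamma_n}$) are controlled. For the $E_h$ terms one uses the coercivity bound \eqref{coercivity_inverse} to dominate the indefinite product by the $\sqrt{\sigma_0}$-jump and $\sqrt{a}$-gradient norms, and then bounds the temporal-slice $L^2$-norms of the gradient and velocity by the volume norms via Proposition~\ref{prop:positiveE} (applied to $V-u$, which is admissible since $V\in\TspaceT$ and $u$ solves the wave equation elementwise, hence $V-u\in\solnspace+\TspaceT$ — one should verify Proposition~\ref{prop:positiveE} indeed extends to this larger space, which it does since its proof only uses that $v$ solves the wave equation locally). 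All constants that appear depend only on $c_{\mathcal T}$, $C_a$, $c_a$, $p$, $T$ and $\widetilde C$, hence are independent of the meshsize and of $u,U$, as required.
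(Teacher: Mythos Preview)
Your first two steps (triangle inequality in the standard norm, then Proposition~\ref{prop:positiveE} applied to $U-V\in\TspaceT$) match the paper exactly. The divergence is in the third step. The paper does \emph{not} bound $\dgnorm{U-V}$ via the triangle inequality $\dgnorm{U-u}+\dgnorm{V-u}$; instead it invokes directly the Galerkin orthogonality estimate
\[
\dgnorm{U-V}^2 = a(U-V,U-V) = a(u-V,U-V) \le C_\star\,\dgnorml{V-u}\,\dgnorm{U-V},
\]
i.e.\ $\dgnorm{U-V}\le C_\star\dgnorml{V-u}$, which is precisely the intermediate inequality established inside the proof of Theorem~\ref{th:best_approx}. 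This avoids the stray $\dgnorm{V-u}$ term altogether, and the proof ends there.

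Your route creates a genuine difficulty that is not easily closed. To absorb $\dgnorm{V-u}$ you need to bound, with mesh-independent constants, the contributions to \eqref{eq:dg_norm_expr} that live at the time levels $t_n^+$ (namely $E_h(t_0^+,V-u)$ and the $t_n^+$ halves of the temporal jumps), since $\dgnorml{\cdot}$ only sees traces at $t_n^-$. Two of your proposed tools fail here: (i) Proposition~\ref{prop:positiveE} goes the wrong way (it bounds volume norms by slice norms, not slice by volume); reversing it via a trace argument introduces a factor $\tau_n^{-1}$, which is not mesh-independent. (ii) The claim that Proposition~\ref{prop:positiveE} ``extends to $V-u$ since its proof only uses that $v$ solves the wave equation locally'' is incorrect: the proof relies on the discrete trace (inverse) inequality, valid for polynomials but not for the exact solution $u$. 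For the same reason, \eqref{coercivity_inverse} cannot be invoked on $V-u$ to control the indefinite term in $E_h$. The clean fix is simply to use $\dgnorm{U-V}\le C_\star\dgnorml{V-u}$ directly, as the paper does.
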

\begin{proof}
\reply{
Triangle inequality and Proposition \ref{prop:positiveE} imply that, for any $V \in \TspaceT$, 
\[
\begin{split}  
& \ltwo{\dot U -\dot u}{\Omega \times [0,T]}+\ltwo{\sqrt{a}\babla (U-u)}{\Omega \times [0,T]}\\
\leq & \ltwo{\dot U -\dot V}{\Omega \times [0,T]}+\ltwo{\sqrt{a}\babla (U-V)}{\Omega \times [0,T]}
\\
&+ \ltwo{\dot V -\dot u}{\Omega \times [0,T]}+\ltwo{\sqrt{a}\babla (V-u)}{\Omega \times [0,T]}\\
\leq & C\dgnorm{U-V}
+ \ltwo{\dot V -\dot u}{\Omega \times [0,T]}+\ltwo{\sqrt{a}\babla (V-u)}{\Omega \times [0,T]}.
\end{split}
\]
As in the proof of Theorem~\ref{th:best_approx}, we also have that 
$
\dgnorm{V-U} \leq C_\star \dgnorml{V-u},
$
which completes the proof.
}
\end{proof}

\section{A priori error bounds}\label{apriori}
We shown next  the Trefftz basis 
is sufficient to deliver the expected rates of convergence for the proposed method.

\begin{lemma}\label{lemma:abstract_error_bound}
Let the setting of Theorem~\ref{th:best_approx} hold, let $\our{v}_h \in \TspaceT$ be an arbitrary function in the discrete space, and let $\eta = u-\our{v}_h$. Then
\begin{equation}\label{eq:abstract_error_bound}
\begin{aligned}
\dgnorm{U-u}^2 \leq \ &  C\sum_{n = 0}^{N-1} \sum_{K\in \mathcal{T}^n} 
\our{C_a}\bigg( \frac{p^2}{\our{\tau_n}}\Big( \our{\max\big\{1,\frac{\tau_n^2}{h_K^2}\big\}}\|\dot{\eta}\|_{ K\times I_n}^2+ \|\nabla\eta\|_{ K\times I_n}^2\Big)\\
&\qquad\quad\qquad+\our{\tau_n}\Big( \|\nabla\dot{\eta}\|_{ K\times I_n}^2+ \our{\max\big\{1,\frac{h_K^2}{\tau_n^2}\big\}}p^{-1}\|D^2\eta\|_{ K\times I_n}^2\Big)\\
&\qquad\quad\qquad+\frac{\our{h_K^2\tau_n}}{p^4} \|D^2\dot{\eta}\|_{ K\times I_n}^2+\our{\frac{p^4}{h_K^2\tau_n} \|\eta\|_{ K\times I_n}^2}\bigg),
\end{aligned}
\end{equation}
where $U \in \TspaceT$ is the discrete solution. 
% and $\alpha:\bar{\Omega}\to\mathbb{R}_+$, with
% \[
% \alpha|_K: =\max_{K':\bar{K}\cap\bar{K}'\neq \emptyset} \alpha_K,\qquad \alpha_K:=\max_{x\in \bar{K}} a(x).
%\]
\end{lemma}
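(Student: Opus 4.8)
The plan is to combine the quasi-optimality of Theorem~\ref{th:best_approx} with explicit choices of the approximating function $v_h\in\TspaceT$ and careful bookkeeping of the $\dgnorm{\cdot}$ and $\dgnorml{\cdot}$ norms in terms of the local approximation error $\eta=u-v_h$. Concretely, I would start from
\[
\dgnorm{U-u}\le C_\star\dgnorml{v_h-u}+\dgnorm{v_h-u}=C_\star\dgnorml{\eta}+\dgnorm{\eta},
\]
and then bound each of the two right-hand terms by the quantity appearing inside the sum on the right of \eqref{eq:abstract_error_bound}. The point is that $\dgnorm{\eta}$ and $\dgnorml{\eta}$ are both sums of $L^2$-norms of $\eta$ and its derivatives over volume pieces $\Omega\times I_n$, over the spatial skeleton $\Gamma_n\times I_n$, and over time slices $\Omega\times\{t_n\}$ and $\Gamma_n\times\{t_n\}$; each of these lower-dimensional contributions must be converted back to volume norms so that the final bound is expressed solely in terms of $\|\dot\eta\|_{K\times I_n}$, $\|\nabla\eta\|_{K\times I_n}$, $\|\nabla\dot\eta\|_{K\times I_n}$, $\|D^2\eta\|_{K\times I_n}$, $\|D^2\dot\eta\|_{K\times I_n}$, and $\|\eta\|_{K\times I_n}$.

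The key technical steps, in order, are: (i) write out $\dgnorm{\eta}^2$ using the expression \eqref{eq:dg_norm_expr} for $a(\eta,\eta)$ (noting $E_h(t,\eta)\le \tfrac12\|\dot\eta\|_\Omega^2+\tfrac12\|\sqrt a\babla\eta\|_\Omega^2+\tfrac12\|\sqrt{\sigma_0}\sj{\eta}\|^2_{\Gamma_n}$ after discarding the non-negative contributions, plus the $\sigma_1,\sigma_2$ jump terms), and similarly write out $\dgnorml{\eta}^2$ term by term; (ii) for the skeleton terms $\|\sqrt{\sigma_1}\sj{\eta}\|_{\Gamma_n\times I_n}^2$, $\|\sqrt{\sigma_2}\sj{a\nabla\eta}\|_{\Gamma_n\times I_n}^2$, and the various $\sa{\cdot}$ terms in $\dgnorml{\cdot}$, apply the discrete trace inequality $\|v\|^2_{\partial K}\le C_{\rm inv}p^2|\partial K|/|K|\,\|v\|^2_K$ (here used on the space-time prism $K\times I_n$, or slice-wise in time) together with the mesh-regularity bounds $|\partial K|/|K|\le c_{\mathcal T}/h_K$ to replace face integrals by element integrals, picking up the factors $h_K^{-1}$ and $p^2$; (iii) for the time-slice terms $\|\dot\eta(t_n^-)\|^2_\Omega$, $\|\sqrt a\nabla\eta(t_n^-)\|^2_\Omega$, etc., apply a one-dimensional trace inequality in the time variable on $I_n$ of the form $\|w(t_n^\pm)\|^2\le C(\tau_n^{-1}\|w\|^2_{I_n}+\tau_n\|\dot w\|^2_{I_n})$, which converts each time-slice norm into volume norms at the cost of $\tau_n^{-1}$ and $\tau_n$ weights; (iv) substitute the explicit stabilisation parameters $\sigma_0=C_{\sigma_0}p^2 h^{-1}$, $\sigma_1=C_a p^3/(h\tau_n)$, $\sigma_2=h/(C_a\tau_n)$ and collect powers of $p$, $h_K$ and $\tau_n$; the $\max\{1,\tau_n^2/h_K^2\}$ and $\max\{1,h_K^2/\tau_n^2\}$ factors arise precisely from balancing the two terms of the time-trace inequality against the skeleton scalings in $\sigma_1$ and $\sigma_2$.

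The main obstacle I anticipate is bookkeeping the interplay of the three scaling parameters $p$, $h_K$, $\tau_n$ so that every term lands in exactly the stated form, in particular correctly producing the $p$-powers ($p^2$, $p^{-1}$, $p^{-4}$, $p^4$) and the two $\max\{1,\cdot\}$ factors — this requires being careful about whether trace inequalities are applied on the full space-time prism or slice-wise, and about the fact that $\sigma_1$ and $\sigma_2$ have opposite $h/\tau_n$ dependence so that the terms $\sigma_1^{-1/2}\sa{\cdot}$ and $\sigma_2^{-1/2}\sa{\cdot}$ in $\dgnorml{\cdot}$ scale differently. A secondary point worth care is that the cross term $-\lip{\tj{\sa{a\babla\eta(t_n)}}}{\tj{\sj{\eta(t_n)}}}{\hGamman}$ in \eqref{eq:dg_norm_expr} has no sign, so it must be absorbed using a Cauchy--Schwarz / Young step against the $\|\sqrt{\sigma_0}\tj{\sj{\eta}}\|^2_{\hGamman}$ term exactly as in \eqref{coercivity_inverse}, before passing to the trace and time-trace estimates; the condition \eqref{eq:Cstab_ineq} on $C_{\sigma_0}$ is what makes this absorption quantitatively admissible. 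Once all terms are in volume form, summing over $K\in\mathcal{T}_n$ and over $n$ gives \eqref{eq:abstract_error_bound} with a generic constant $C$ depending only on $c_{\mathcal T}$, $C_{\rm inv}$, $c_a$, $C_a$ and $C_{\sigma_0}$.
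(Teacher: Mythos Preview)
Your overall architecture is right --- start from Theorem~\ref{th:best_approx}, then reduce every lower-dimensional contribution in $\dgnorml{\eta}$ and $\dgnorm{\eta}$ to volume norms on $K\times I_n$ --- but there is a genuine gap in step~(ii). You propose to handle the spatial skeleton terms via the \emph{discrete} trace inequality $\|v\|_{\partial K}^2\le C_{\mathrm{inv}}p^2|\partial K|/|K|\,\|v\|_K^2$. That estimate is valid only for polynomials, whereas $\eta=u-v_h$ contains the exact solution $u$ and is \emph{not} piecewise polynomial. The same objection applies to your plan for the sign-indefinite cross term in $E_h(t,\eta)$: the argument \eqref{coercivity_inverse} you invoke is again an inverse-inequality argument, so it does not apply to $\eta$. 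The paper avoids this by using the \emph{continuous} multiplicative trace estimate $\|v\|_{\partial\omega}^2\le C\|v\|_\omega\big(\|v\|_\omega^2+\|\nabla v\|_\omega^2\big)^{1/2}$ (applied both across spatial faces and, as in your step~(iii), across time slices), together with Young's inequality with a free parameter; this is why an extra spatial derivative appears in each estimate (e.g.\ $\|D^2\eta\|$, $\|D^2\dot\eta\|$) rather than an extra factor of $p^2/h_K$.

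A second point you miss is that the time-trace of $\dot\eta$ produces a term $\tau_n\|\ddot\eta\|_{K\times I_n}^2$, yet no second time derivative appears in \eqref{eq:abstract_error_bound}. The resolution is that $\eta$ itself is a local Trefftz function: both $u$ and $v_h$ satisfy $\ddot w=\nabla\cdot(a\nabla w)$ elementwise, so $\|\ddot\eta\|_{K\times I_n}=\|\nabla\cdot(a\nabla\eta)\|_{K\times I_n}\le C_a\|\Delta\eta\|_{K\times I_n}$, which is then absorbed into the $\|D^2\eta\|$ term. Without invoking this, your time-trace step cannot close.
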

\begin{proof}
 Theorem \ref{th:best_approx} implies 
\[
\dgnorm{U-u} \leq  C_\star \dgnorml{\eta}+\dgnorm{\eta}.
\]
We shall now estimate each term of the norms on the right-hand side. We shall repeatedly use the standard trace estimate 
$
 \| v \|_{\partial \omega}^2 \le C \| v \|_{ \omega} ( \| v \|_{ \omega}^2+ \| \nabla v \|_{ \omega}^2)^{1/2},
$
for $v\in H^1(\omega)$, where $\omega$ is a subset of \reply{$\mathbb{R}^k$, $k=1,\dots,d+1$}.
We proceed as follows:
\[
\begin{aligned}
\sum_{n = 1}^{N} \|\dot \eta(t_n^-)\|^2_{\Omega} 
&=   \sum_{n = 1}^{N} \sum_{K\in \mathcal{T}^{n-1}} \|\dot \eta\|^2_{K\times \{t_n^-\}}
\le C\sum_{n = 0}^{N-1} \sum_{K\in \mathcal{T}^n} \Big( \frac{C_a p}{\our{\tau_n} }\|\dot \eta\|^2_{K\times I_n}+ \frac{\our{\tau_n} }{C_a p}\|\ddot \eta\|^2_{K\times I_n}\Big)\\
& \le C\sum_{n = 0}^{N-1} \sum_{K\in \mathcal{T}^n} \Big( \frac{C_a p}{\our{\tau_n} }\|\dot \eta\|^2_{K\times I_n}+ \frac{\our{\tau_n}}{C_a p}\|\nabla\cdot a(\cdot)\nabla \eta\|^2_{K\times I_n}\Big)\\
& \le C\sum_{n = 0}^{N-1} \sum_{K\in \mathcal{T}^n} C_a \Big( \frac{p}{\our{\tau_n} }\|\dot \eta\|^2_{K\times I_n}+ \frac{\our{\tau_n}}{p}\|\Delta \eta\|^2_{K\times I_n}\Big);
\end{aligned}
\]
We prefer to retain an explicit dependence on the polynomial degree $p$ at this point, as it will be of relevance in the error analysis for $d=1$.
In analogous fashion, we also have
\[
\begin{aligned}
\sum_{n = 1}^{N} \| \sqrt{a}\nabla \eta(t_n^-)\|^2_{\Omega} 
&
\le C\sum_{n = 0}^{N-1} \sum_{K\in \mathcal{T}^n} C_a \Big( \frac{p}{\our{\tau_n}}\|\nabla \eta\|^2_{K\times I_n}+ \frac{\our{\tau_n}}{ p}\|\nabla\dot \eta\|^2_{K\times I_n}\Big).
\end{aligned}
\]
Next, we estimate the penalty term:
\[
\begin{aligned}
\sum_{n = 1}^{N}  \|\sqrt{\sigma_0^{}}\sj{\eta(t_n^-)}\|^2_{\Gamma_n} 
&
\le C  \sum_{n = 0}^{N-1} \sum_{K\in \mathcal{T}^n} C_a \Big( \frac{p^3}{\our{\tau_n h_K}}\| \eta\|^2_{\partial K\times I_n}+ p\our{\frac{\tau_n}{h_K}}\|\dot \eta\|^2_{\partial K\times I_n}\Big)\\
&
\le C  \sum_{n = 0}^{N-1} \sum_{K\in \mathcal{T}^n} C_a \Big( \frac{p^4}{\our{\tau_n h_K^2}}\| \eta\|^2_{K\times I_n}+\frac{p^2}{\our{\tau_n} }\|\nabla \eta\|^2_{K\times I_n}\\
   &\qquad \qquad\qquad\quad+ \frac{p^2\our{\tau_n}}{\our{h_K^2}}\|\dot \eta\|^2_{K\times I_n}+ \our{\tau_n} \| \nabla\dot\eta\|^2_{K\times I_n}\Big).
\end{aligned}
\]
Similarly, we also have
\[
\begin{aligned}
\sum_{n = 1}^{N}  \|\sigma_0^{-1/2}\sa{a\nabla \eta(t_n^-)}\|^2_{\Gamma_n}
&
\le C  \sum_{n = 0}^{N-1} \sum_{K\in \mathcal{T}^n} C_a \Big( \frac{\our{h_K}}{p\,\our{\tau_n}}\| \nabla \eta\|^2_{\partial K\times I_n}+ \frac{\our{h_K\tau_n}}{p^3}\|\nabla\dot \eta\|^2_{\partial K\times I_n}\Big)\\
&
\le C  \sum_{n = 0}^{N-1} \sum_{K\in \mathcal{T}^n} C_a \Big( \frac{1}{\our{\tau_n} }\| \nabla \eta\|^2_{K\times I_n}+\frac{\our{h_K^2} }{p^2\our{\tau_n}}\|D^2 \eta\|^2_{K\times I_n}\\
   &\qquad \qquad\qquad\quad+ \frac{\our{\tau_n}  }{p^2}\|\nabla\dot \eta\|^2_{K\times I_n}+\frac{\our{h_K^2\tau_n} }{p^4}\| D^2\dot\eta\|^2_{K\times I_n}\Big).
\end{aligned}
\]
Next, recalling that $\sigma_1|_{\partial K\cap \Gamma_n\times I_n}= C_a p^3/(\our{h_K\tau_n} )$, we estimate
\[
\begin{aligned}
\sum_{n = 0}^{N-1}\ltwo{\sqrt{\sigma_1}\sj{\eta}}{\Gamma_n \times I_n}^2
&
\le C  \sum_{n = 0}^{N-1} \sum_{K\in \mathcal{T}^n} C_a \Big( \frac{p^4}{\our{\tau_n h_K^2}}\|  \eta\|^2_{ K\times I_n}+ \frac{p^2}{\our{\tau_n} }\|\nabla \eta\|^2_{K\times I_n}\Big).
\end{aligned}
\]
Further, since $\sigma_2= \our{h_K}/(C_a\our{\tau_n})$,  we have
\[
\begin{aligned}
\sum_{n = 0}^{N-1} \ltwo{\sqrt{\sigma_2}\sj{a\nabla \eta}}{\Gamma_n \times I_n}^2
&
\le C  \sum_{n = 0}^{N-1} \sum_{K\in \mathcal{T}^n} C_a \Big( \frac{p^2}{\our{\tau_n} }\|\nabla  \eta\|^2_{ K\times I_n}+ \frac{\our{h_K^2}}{p^2\our{\tau_n}}\|D^2 \eta\|^2_{K\times I_n}\Big).
\end{aligned}
\]
%this arises again from the trace estimate \| v \|_{\partial K}^2 \le C (  d^{-1} \| v \|_{ K}^2 +d \| \nabla v \|_{ K}^2 but with d= \sqrt{h/p^2}.
The next term is treated as follows:
\[
\begin{aligned}
\sum_{n = 0}^{N-1} \ltwo{\sigma_2^{-1/2}\sa{\dot \eta}}{\Gamma_n^{\rm int} \times I_n}^2
&
\le C  \sum_{n = 0}^{N-1} \sum_{K\in \mathcal{T}^n} C_a \Big( \frac{p^2\our{\tau_n}}{\our{h_K^2} }\| \dot \eta\|^2_{ K\times I_n}+ \frac{\our{\tau_n} }{p^2}\|\nabla \dot \eta\|^2_{K\times I_n}\Big).
\end{aligned}
\]
Continuing, we have
\[\begin{aligned}
\sum_{n = 0}^{N-1}\ltwo{\sigma_1^{-1/2} \sa{a\nabla \dot \eta}}{\Gamma_n \times I_n}^2
&
\le C  \sum_{n = 0}^{N-1} \sum_{K\in \mathcal{T}^n} C_a \Big( \frac{\our{\tau_n} }{p^2}\| \nabla \dot\eta\|^2_{ K\times I_n}+ \frac{\our{\tau_n h_K^2}}{p^4}\| D^2 \dot \eta\|^2_{K\times I_n}\Big).
\end{aligned}
\]
Finally, we estimate
\[
\begin{aligned}
\sum_{n = 0}^{N-1}\ltwo{\sigma_0\sigma_1^{-1/2}\sj{\dot u}}{\Gamma_n \times I_n}^2
&
\le C  \sum_{n = 0}^{N-1} \sum_{K\in \mathcal{T}^n} C_a \Big( \frac{p^2\our{\tau_n}}{\our{h_K^2} }\|  \dot\eta\|^2_{ K\times I_n}+ \our{\tau_n} \| \nabla \dot \eta\|^2_{K\times I_n}\Big).
\end{aligned}
\]
%\sigma_0^2\sigma_1^{-1} = (C_a^2 p^4/h_K^{(n)} ^2 ) h_K^{(n)} ^2 / (C_a p^3) =C_a p
The remaining terms in $\dgnorm{\eta}$ are treated completely analogously.
\end{proof}

\our{We have kept the explicit dependence on the local spatial and temporal meshsizes to further emphasize that the proposed method does not require any CFL-type restrictions for stability and convergence: as long as $h_K\sim \tau_n$ the above bound is sufficient to show optimal convergence, as we shall see below.}
To complete the error analysis, we need to prove the existence of an appropriate approximation in $\TspaceT$ of the exact solution. We first show how to obtain
such an approximation locally.

\begin{proposition}
Let $J \subset \mathbb{R}^{d+1}$ be star-shaped with respect to a ball $B \subset J$. Then, there exists a projector 
\[
\Pi^{p}: H^{p+1}(J) \rightarrow \mathcal{P}_p(J)
\]
such that for any $v \in H^{p+1}(J)$
\[
\|D^\beta (v-\Pi^{p}v)\|_{J} \leq  C(\diam(J))^{p+1-|\beta|} \|v\|_{H^{p+1}(J)}, \qquad
|\beta| \leq p
\]
and further if $v$ satisfies the wave equation $\ddot u - \nabla \cdot a \nabla v = 0$ in $J$ then so does $\Pi^pv$. The constant $C$ depends on $p$ and on the shape of $J$\footnote{By the shape of the domain, is here meant the chunkiness parameter $\diam(J)/\rho_{\text{max}}$ where $\rho_{\text{max}} = \sup\{\rho : J \text{ is star-shaped with respect to a ball of radius } \rho\}$}.
\end{proposition}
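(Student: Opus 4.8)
The plan is to take $\Pi^p$ to be the \emph{averaged Taylor polynomial} (the Dupont--Scott form of the Bramble--Hilbert projector) built on the ball $B$, and then to verify the extra Trefftz-preservation property by an explicit computation. Fix a cut-off $\phi\in C_c^\infty(B)$ with $\int_B\phi(y)\ud y=1$ and, for $w$ with enough regularity and $m\ge 0$, write $T_y^m w(x):=\sum_{|\alpha|\le m}\frac{1}{\alpha!}D^\alpha w(y)\,(x-y)^\alpha$ for the $m$-th order Taylor polynomial of $w$ about $y$. For $v\in H^{p+1}(J)$ set
\[
\Pi^p v(x):=\int_B T_y^p v(x)\,\phi(y)\ud y .
\]
Integrating by parts in $y$ (the boundary terms vanish because $\phi$ is compactly supported in $B\subset J$) one checks that $\Pi^p v$ is a well-defined element of $\mathcal{P}_p(J)$ depending only on $v$ and not on its pointwise derivatives, that $\Pi^p$ reproduces $\mathcal{P}_p(J)$, and hence that it is a projector. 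The classical Bramble--Hilbert/Dupont--Scott argument on domains star-shaped with respect to $B$ then yields
\[
\|D^\beta(v-\Pi^p v)\|_J\le C(\diam J)^{p+1-|\beta|}|v|_{H^{p+1}(J)},\qquad |\beta|\le p,
\]
with $C$ depending only on $p$ and on the chunkiness parameter $\diam(J)/\rho_{\max}$; since $|v|_{H^{p+1}(J)}\le\|v\|_{H^{p+1}(J)}$ this gives the stated bound.

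The only genuinely new point is the Trefftz preservation, and it rests on the elementary identity
\[
D_x^{\beta}\,T_y^p v(x)=T_y^{\,p-|\beta|}\!\big(D^\beta v\big)(x),\qquad |\beta|\le p,
\]
obtained by differentiating the Taylor polynomial $T_y^p v$ monomial by monomial. Writing $L_a w:=\ddot w-\nabla\cdot(a\nabla w)=\partial_{tt}w-a\Delta w$ for the wave operator --- which has constant coefficients on $J$, since by assumption $a$ is constant on each $K\in\mathcal{T}_n$ and $J$ lies inside one such space-time element --- and using the linearity of $w\mapsto T_y^m w$, this identity gives $L_a\big(T_y^p v\big)=T_y^{\,p-2}(L_a v)$ for every $y$. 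Since $\Pi^p v$ is a polynomial in $x$ whose coefficients are $y$-integrals of $D^\alpha v(y)\phi(y)$ with $|\alpha|\le p$, we may differentiate under the integral sign to obtain
\[
L_a\big(\Pi^p v\big)(x)=\int_B \big(L_a T_y^p v\big)(x)\,\phi(y)\ud y=\int_B T_y^{\,p-2}(L_a v)(x)\,\phi(y)\ud y,
\]
and the right-hand side vanishes identically whenever $L_a v=0$ in $J\supset B$. Hence $\Pi^p v$ again satisfies the wave equation, which is exactly the claimed property; equivalently, $\Pi^p$ intertwines with the wave operator, $L_a\Pi^p=\Pi^{p-2}L_a$.

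I do not anticipate a deep obstacle: the argument is essentially bookkeeping. The one point requiring care is that an $H^{p+1}(J)$ function need not possess pointwise derivatives of order $p$, so that $\Pi^p v$ must be understood through its integrated form, in which integration by parts in $y$ transfers every $y$-derivative onto the smooth factors $\phi$ and $(x-y)^\alpha$; with that interpretation $\Pi^p$ is well defined on $H^{p+1}(J)$, reproduces $\mathcal{P}_p(J)$, and the manipulations above --- in particular the commutation $L_a\Pi^p=\Pi^{p-2}L_a$ and the approximation estimate --- all hold verbatim. A secondary item is simply to employ the ball $B\subset J$ and associated cut-off $\phi$ furnished by the star-shapedness hypothesis, so that the Bramble--Hilbert constant depends only on $p$ and on $\diam(J)/\rho_{\max}$, as claimed.
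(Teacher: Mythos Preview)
Your proposal is correct and follows essentially the same route as the paper: both take $\Pi^p$ to be the averaged Taylor polynomial over the ball $B$, invoke the Bramble--Hilbert/Dupont--Scott estimate for the approximation bound, and derive the Trefftz-preservation from the commutation identity $D^\alpha\Pi^p v=\Pi^{p-|\alpha|}D^\alpha v$ (which you obtain by first establishing it for the un-averaged Taylor polynomial and then integrating). Your write-up is somewhat more explicit about the constant-coefficient requirement on $a$ and about the integration-by-parts interpretation needed to make sense of $\Pi^p$ on $H^{p+1}(J)$, but the underlying argument is the same.
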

\begin{proof}
We can define $\Pi^p v$ to be the averaged Taylor polynomial of order $p$,
\[
\Pi^p v(x) = \sum_{|\alpha| \leq p} \frac1{\alpha!} \int_B D^\alpha v(y) (x-y)^\alpha \phi(y) dy,
\]
where $\phi \in C_0^\infty(\mathbb{R}^{d+1})$ is an arbitrary cut-off function satisfying $\int_B \phi = 1$ and $\operatorname{supp} \phi = \overline{B}$; for details see \cite{brenner_scott}. Then Bramble-Hilbert lemma gives us the approximation property required, see \cite[Lemma~4.3.8]{brenner_scott}, and so it only remains to show that $\Pi^p v$ satisfies the wave equation if $v$ does. For $p \leq 1$ this is clear. For $p \geq 2$, the result follows from the linearity of $\Pi^p$, the fact that the wave equation contains only operators of second order, and the following property of averaged Taylor polynomials:
\[
D^\alpha \Pi^p v = \Pi^{p-|\alpha|} D^\alpha v, \qquad |\alpha| \leq p.
\]
\end{proof}

Applying such a projector to the exact solution and combining this with Lemma~\ref{lemma:abstract_error_bound} gives us a proof of the convergence order of the discrete scheme.

\begin{theorem}\label{thm:conv_order}
  Let the exact solution $u \in \solnspace$ be such that for each space time element $K \times I_n$, $u|_{K \times I_n} \in H^{s+1}(K \times I_n)$ for some $0 \leq s \leq p$. 
 Then
\begin{equation}\label{eq:gen_error_bound}
\dgnorm{U-u} \leq C\left( \sum_{n = 0}^{N-1} \sum_{K\in \mathcal{T}^n} (h_K^{(n)})^{2s-1}  \|u\|^2_{H^{s+1}(K \times I_n)}\right)^{1/2} \leq C(\reply{u}) h^{s-1/2},
\end{equation}
where $h = \max_{K,n} h^{(n)}_K$ and 
\[
C(\reply{u}) = \left(\sum_{n = 0}^{N-1} \sum_{K\in \mathcal{T}^n}  \|u\|^2_{H^{s+1}(K \times I_n)}\right)^{1/2}.
\]
\end{theorem}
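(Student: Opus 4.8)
The strategy is to feed into the abstract bound \eqref{eq:abstract_error_bound} the local approximation $v_h\in\TspaceT$ obtained by applying, element by element, the averaged Taylor projector constructed in the proposition above — but with polynomial degree $s$ in place of $p$ — to the exact solution $u$. Each space-time element $K\times I_n$ is a prism and hence star-shaped with respect to a ball, and $u|_{K\times I_n}\in H^{s+1}(K\times I_n)$ solves the constant-coefficient wave equation there; therefore $\Pi^s\big(u|_{K\times I_n}\big)\in\mathcal P_s(\mathbb R^{d+1})\subseteq\mathcal P_p(\mathbb R^{d+1})$ again solves the wave equation on $K\times I_n$, and patching these local polynomials over the time-slabs defines a function $v_h\in\TspaceT$. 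With this choice, $\eta:=u-v_h$ is admissible in Lemma~\ref{lemma:abstract_error_bound}.

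The second ingredient is the approximation estimate of the same proposition, which for any multi-index $\beta$ with $|\beta|\le s$ gives
\[
\|D^\beta\eta\|_{K\times I_n}\le C\,\big(\diam(K\times I_n)\big)^{s+1-|\beta|}\|u\|_{H^{s+1}(K\times I_n)};
\]
for the few highest-order terms where $|\beta|>s$ (namely $D^2\eta$ and $D^2\dot\eta$ when $s$ is small) the degree-$s$ polynomial $\Pi^s u$ has vanishing derivatives of that order, so the same bound holds with $\eta$ replaced by $u$. Assumption~\ref{as:st_reg} enters here in two ways: it bounds the chunkiness parameter on which the constant $C$ depends, and, since $K\times I_n$ is a space-time shape-regular prism, it yields $\diam(K\times I_n)\simeq h_K^{(n)}\simeq\tau_n$; in the prismatic, non-local-time-stepping setting it moreover forces global quasi-uniformity, so $h_K^{(n)}\simeq h$ for all $K$ and $n$.

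Using $\tau_n\simeq h_K$, the $\max$-factors in \eqref{eq:abstract_error_bound} are $O(1)$, and inserting the approximation bounds one checks term by term that every summand scales like $h_K^{2s-1}\|u\|_{H^{s+1}(K\times I_n)}^2$: for instance $\tfrac{p^2}{\tau_n}\|\nabla\eta\|_{K\times I_n}^2\simeq h_K^{-1}h_K^{2s}$, $\tau_n\|\nabla\dot\eta\|_{K\times I_n}^2\simeq h_K\,h_K^{2s-2}$, $\tfrac{h_K^2\tau_n}{p^4}\|D^2\dot\eta\|_{K\times I_n}^2\simeq h_K^3\,h_K^{2s-4}$, and $\tfrac{p^4}{h_K^2\tau_n}\|\eta\|_{K\times I_n}^2\simeq h_K^{-3}h_K^{2s+2}$, all equal to $h_K^{2s-1}$ up to constants depending only on $p$, $c_{\mathcal T}$, $c_a$, $C_a$. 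Summing over $n$ and over $K\in\mathcal T^n$ gives the first inequality in \eqref{eq:gen_error_bound}; pulling out $\max_{K,n}(h_K^{(n)})^{2s-1}$ — which is bounded by $C h^{2s-1}$ by quasi-uniformity (needed when $2s-1<0$, otherwise simply by $h_K^{(n)}\le h$) — and taking square roots yields the second, with $C(u)$ the stated $H^{s+1}$-seminorm aggregate.

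The work is essentially bookkeeping: the only step requiring care is verifying that the six groups of terms in \eqref{eq:abstract_error_bound}, together with the ``remaining terms'' of Lemma~\ref{lemma:abstract_error_bound} treated analogously, all collapse to the single power $h^{2s-1}$ once $\tau_n$ is identified with $h_K$ — the exponents of $h$ and of $p$ must be tracked through the trace inequalities and the projection estimate, with the $p$-dependence suppressed into $C$ here but retained in the lemma precisely because it is what makes the $hp$-version bound for $d=1$ possible.
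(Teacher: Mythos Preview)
Your proposal is correct and follows exactly the route the paper indicates: the paper's own ``proof'' is the single sentence preceding the theorem, namely that one applies the averaged Taylor projector of the preceding proposition to the exact solution and feeds the resulting local estimates into Lemma~\ref{lemma:abstract_error_bound}. Your write-up simply fleshes out this sketch --- in particular the use of $\Pi^s$ (rather than $\Pi^p$) so that the projector is defined on $H^{s+1}$, the invocation of Assumption~\ref{as:st_reg} to equate $\tau_n\simeq h_K$, and the term-by-term power count --- all of which is implicit in the paper's one-line argument.
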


%Therefore, the validity of the bound \eqref{eq:gen_error_bound} rests upon a proof of approximation estimates of the form \eqref{eq:approx_bound_gen} for polynomial Trefftz basis. We have already proved the existence of such a $\Pi^{h,p}u$ in any dimension if $u$ is locally continuously differentiable. In dimension $d = 1$, we shall show $hp$-version a priori bounds and give a proof of exponential convergence of the $p$-version space-time dG method for the wave problem.

% \subsection{Approximation properties of the multidimensional Trefftz basis}\label{sec:approx}

% If we assume that locally $v$ has a Taylor expansion of sufficiently \reply{high degree}, then the projection operator in \eqref{eq:approx_bound_gen} can be defined as the Taylor polynomial; see \cite{MacJ} and \cite{Whi03}. The following lemma and corollary give the details.

\subsection{$hp$-version error analysis for $d=1$}\label{sec:approx_one}
As we shall now show, the Trefftz basis is sufficient to deliver the expected $hp$-version a priori error bounds for $d=1$, along with a proof of the exponential convergence of the $p$-version space-time dG method for the case of analytic exact solutions. \reply{See \cite{KreMPS} for another $hp$-analysis in 1D of a different Trefftz based method.}

To discuss the Trefftz-basis case for $d=1$, we let $K = [x_0,x_1]$ and start from the basic observation that the exact solution to the wave equation on each space time element is of the form
\begin{equation}\label{eq:exact_sol_form}
u(x,t)|_{K\times I_n} = F^1_{n,K}(a^{-1/2}x+t)+F^2_{n,K}(a^{-1/2}x-t),
\end{equation}
where we can define $F^1$ and $F^2$ by
\[
F^1_{n,K}(a^{-1/2}x+t) = \tfrac12 u(x,t)+\tfrac12v(x,t), \qquad
(x,t) \in K \times I_n
\]
and
\[
F^2_{n,K}(a^{-1/2}x-t) = \tfrac12 u(x,t)-\tfrac12 v(x,t), \qquad
(x,t) \in K \times I_n,
\]
where
\[
v(x,t) = a^{1/2}\int_{t_n}^t u_x(x,\tau) d\tau+a^{-1/2}\int_{x_0}^xu_t(x',t_n)dx'.
\]
It is not difficult to see that these are well-defined, i.e., that the right-hand sides indeed depend only on $a^{-1/2}x\pm t$ by virtue of satisfying the equations $a^{1/2} f_x \mp f_t = 0$ respectively.
%for some sufficiently functions $F^i_{n,K}$, $i=1,2$, $K\in\mathcal{T}_n$; the smoothness of $F^i_{n,K}$ depends directly on the smoothness of the initial conditions.

For $\hat{I}:=(-1,1)$, we define
% the (generic) univariate projection operator $\hat{\lambda}_p:H^m(\hat{I})\to
%\mathcal{P}_p(\hat{I})$, $s>0$, $p\ge 1$, $\mathcal{P}_p(\hat{I})$ being
%the space of polynomials of degree $p$ or less on $\hat{I}$. Two specific choices for  $\hat{\lambda}_p$ will be used below: the classical $hp$-projection operator \cite{canuto_quarteroni,babuska_suri} for which we have
%\[
%\ltwo{(u-\hat{\lambda}_p u)^{(m)} }{I}\leq C p^{-r+m} \ltwo{u^{(r)}}{I},
%\]
%for all $m,r\in \mathbb{R}_+$, $m\le r$ and $C$ dependent on $m,r$ but not on $p$ or $u$, and 
the $H^1$-projection operator $\hat{\lambda}_p:H^1(\hat{I})\to
\mathcal{P}_p(\hat{I})$, $p\ge 1$, defined by
setting, for $\hat{u}\in H^1(\hat{I})$,
\[(\hat{\lambda}_p\hat{u})(x):=\int_{-1}^{x}\hat{\pi}_{p-1}(\hat{u}')(\eta)\ud
\eta +\hat{u}(-1),\qquad x\in \hat{I},\] with $\hat{\pi}_{p-1}$
being the $L^2$-orthogonal projection operator onto
$\mathcal{P}_{p-1}(\hat{I})$. 

Now, upon considering the linear scalings $\psi^1_{n,K}:\hat{I}\to J^1_{n,K}$,  $K\in\mathcal{T}_{n}$, such that
\[
J^1_{n,K}:=( \min_{(x,t)\in K\times I_n} \{x+ct\}, \max_{(x,t)\in K\times I_n} \{x+ct\} ),
\]
and $\psi^2_{n,K}:\hat{I}\to J^2_{n,K}$,  $K\in\mathcal{T}_{n}$, such that
\[
J^2_{n,K}:=( \min_{(x,t)\in K\times I_n} \{x-ct\}, \max_{(x,t)\in K\times I_n} \{x-ct\} ),
\]
we define the univariate space-time elemental projection operators $\lambda^i_p$, $i=1,2$, piecewise by
\[
(\lambda^i_p F)|_{J^i_{n,K}} := \hat{\lambda}^i_p ((F\circ \psi^i_{n,K})|_{\hat{I}}), \quad K\in\mathcal{T}_{n},\ n=0,1,\dots, N-1.
\]
Using these, we can now define the \emph{Trefftz projection} $\Pi_p u$ of a function $u$ of the form \eqref{eq:exact_sol_form} element-wise by
\begin{equation}\label{trefftz_proj}
(\Pi_p u)|_{K\times I_n} := \lambda^1_p F^1_{n,K}(x+ct) + \lambda^2_p F^2_{n,K}(x-ct),
\end{equation}
$K\in\mathcal{T}_{n}$, $n=0,1,\dots, N-1$. The approximation properties of $\Pi_p$ follow from the respective properties of $\lambda^i_p$, $i=1,2$. Space-time shape regularity implies $J^i_{n,K}\sim h_K^{(n)}$, $i=1,2$. 

We denote by $\Phi(p,s)$ the quantity
$
 \Phi(p,s):=(\Gamma(p-s+1)/\Gamma(p+s+1))^{\ha},
$
with $p,s$ real numbers such that $0\le s\le p$
and $\Gamma(\cdot)$ being the Gamma function; we also adopt the standard convention
$\Gamma(1)=0!=1$. Making use of \emph{Stirling's formula},
$\Gamma(n)\sim\sqrt{2\pi}n^{n-\ha}e^{-n}$, $n>0$,
we have, 
$
  \Phi(p,s) \le C p^{-s}
$, for $p\ge 1$,
with $0\le s\le p$ and $C>0$ constant depending only on $s$.

We have the following $hp$-approximation results for $\lambda^i_p$, $i=1,2$.

\begin{lemma}\label{approx_theorem}
Let $v \in H^{k+1}(J)$, for $k\ge 1$, and let $h=\diam(J)$ for \reply{an open, bounded interval $J\subset\mathbb{R}$}; finally let $\lambda_p$ be any of the $\lambda^i_p$, $i=1,2$. Then the following error bounds hold:

\begin{equation}\label{ltwobounis}
   \ltwo{v - \lambda_p v}{J} \le Cp^{-1}\Phi(p,s)h^{s+1}|v|_{s+1,J},
\end{equation}
and
\begin{equation}\label{dxbounis}
   \ltwo{v' - (\lambda_p v)'}{J} \le C\Phi(p,s)h^{s}|v|_{s+1,J},
\end{equation}
with $0\le s\le \min\{p,k\}$, $p\ge 1$. 
%Also, we have
%\begin{equation}\label{l2boundary2stais}
%   \ltwo{v- \Pi_p v}{\partial J}  \le Cp^{-1/2}\Phi(p,t)h^{t+1/2}|u|_{t+1,J},
%\end{equation}
%with $1\le t\le \min\{p,k\}$, $p\ge 1$. 
%Also, for $v\in H^{k+1}(\kappa)$, with $k\ge 2$, the following error estimate holds:
%\begin{equation}\label{ioddxstais}
%  \ltwo{v'-(\Pi_p v)'}{\partial J}  \le C p^{1/2}\Phi(p,l)h^{l-1/2}|u|_{l+1,J},
%\end{equation}
%with $2\le l\le\min\{p,k\}$, $p\ge 2$. 
Also, let $v\in H^{k+1}(J)$, with $k\ge 2$. Then, the following bound holds:
\begin{equation}\label{htwo_bound}
  \ltwo{v''- (\lambda_p v)''}{J}\le C p^{3/2}\Phi(p,m)h^{m-1}|v|_{m+1,J},
\end{equation}
with $1\le m\le \min\{p,k\}$. Finally, let $v\in H^{k+1}(J)$, with $k\ge 3$. Then, the following bound holds:
\begin{equation}\label{hthree_bound}
  \ltwo{v'''- (\lambda_p v)'''}{J}\le C p^{7/2}\Phi(p,l)h^{l-2}|v|_{l+1,J},
\end{equation}
with $2\le l\le \min\{p-1,k\}$.
\end{lemma}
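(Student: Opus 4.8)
The plan is to reduce everything to classical $hp$-approximation estimates on the reference interval $\hat I$ for the composite operator $\hat\lambda_p$, then transfer back by the linear scalings $\psi^i_{n,K}$. First I would recall that $\hat\lambda_p\hat u$ is, by construction, the antiderivative (pinned at $x=-1$) of $\hat\pi_{p-1}(\hat u')$, so that $(\hat\lambda_p\hat u)' = \hat\pi_{p-1}(\hat u')$ exactly; consequently $\hat u' - (\hat\lambda_p\hat u)' = \hat u' - \hat\pi_{p-1}(\hat u')$ and all the derivative estimates \eqref{dxbounis}, \eqref{htwo_bound}, \eqref{hthree_bound} become standard $L^2$-projection error bounds for $\hat\pi_{p-1}$ applied to $\hat u'$, $\hat u''$, $\hat u'''$. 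The relevant facts are: (i) $\|\hat w - \hat\pi_{q}\hat w\|_{\hat I}\le C\Phi(q+1,s)|\hat w|_{s,\hat I}$ for the $L^2$-projection onto $\mathcal P_q(\hat I)$ (a Legendre-coefficient / Rodrigues-formula computation, see e.g. Schwab's book), and (ii) an inverse-type statement $\|\hat\pi_q \hat w\|_{H^j(\hat I)}\le C q^{2j}\|\hat\pi_q\hat w\|_{\hat I}$, used to convert an $L^2$ estimate for the projection error into an $H^1$ or $H^2$ estimate for the projection-error derivative. For \eqref{htwo_bound} we write $\hat u'' - (\hat\lambda_p\hat u)'' = \hat u'' - (\hat\pi_{p-1}\hat u')' = (\hat u' - \hat\pi_{p-1}\hat u')'$ and split as $(I-\hat\pi_{p-2})$ of $\hat u''$ plus $(\hat\pi_{p-2}-\hat\pi_{p-1})$-type terms handled by the inverse inequality, which is where the $p^{3/2}$ arises; similarly the double application of the inverse inequality for the third derivative produces the $p^{7/2}$ in \eqref{hthree_bound}.

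Next I would prove the $L^2$ bound \eqref{ltwobounis}. The natural route is a Poincaré/Friedrichs argument: since $(v-\lambda_p v)(-1)=0$ after scaling to $\hat I$, one has $\|\hat u - \hat\lambda_p\hat u\|_{\hat I}\le C\|(\hat u - \hat\lambda_p\hat u)'\|_{\hat I} = C\|\hat u' - \hat\pi_{p-1}\hat u'\|_{\hat I}$, and then one uses the sharper $L^2$-projection bound that gains an extra power of $p$ in the $L^2$ norm of a function against its projection error — i.e. $\|\hat w - \hat\pi_{p-1}\hat w\|_{\hat I}\le C p^{-1}\Phi(p,s)|\hat w|_{s,\hat I}$ when one is careful to extract the factor from the decay of Legendre coefficients. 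This yields the $p^{-1}$ prefactor claimed in \eqref{ltwobounis}. Throughout, the quantity $\Phi(p,s)=(\Gamma(p-s+1)/\Gamma(p+s+1))^{1/2}$ is exactly the weight that appears naturally when one bounds tails of Legendre/Jacobi coefficient series, so I would carry it symbolically rather than immediately replacing it by $Cp^{-s}$.

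Finally I would transfer to the general interval $J$ (which is one of $J^i_{n,K}$) by the affine map $\psi$ with $|\psi'|\sim h=\diam(J)$: each spatial derivative of order $j$ contributes a factor $h^{-j}$ on one side and $h^{+j}$ on the seminorm side, giving the stated powers $h^{s+1}$, $h^{s}$, $h^{m-1}$, $h^{l-2}$; the definition of $\lambda^i_p$ on $J^i_{n,K}$ as $\hat\lambda_p$ conjugated by $\psi^i_{n,K}$ makes this purely a change-of-variables bookkeeping step, using that $\hat\lambda_p$ commutes with affine pullback because $\hat\pi_{p-1}$ does. The index restrictions ($k\ge1,2,3$ and $s,m,l$ bounded by $\min\{p,k\}$ or $\min\{p-1,k\}$) are exactly what is needed for the derivative of the data to lie in the Sobolev space to which the projection-error estimate applies; the drop to $p-1$ in \eqref{hthree_bound} reflects that differentiating $\hat\lambda_p\hat u$ three times only leaves $\mathcal P_{p-3}$, so only $p-1$ "useful" coefficients remain.

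The main obstacle I anticipate is getting the precise powers of $p$ — the $p^{3/2}$ and especially the $p^{7/2}$ — rather than crude polynomial-in-$p$ bounds. This requires the sharp $p$-explicit inverse inequalities for $L^2$-projected polynomials in the $H^1$ and $H^2$ norms (scaling like $p^{2}$ per derivative is too lossy; the correct scaling uses that the projection error itself is smooth, so one combines $\|\hat\pi_q \hat w\|_{H^1}\lesssim q\,\|\hat w\|_{L^2}$-type estimates with the decay of coefficients). Keeping careful track of which factor of $p$ comes from the inverse inequality and which is already present in $\Phi(p,\cdot)$, and making sure the two do not double-count, is the delicate accounting that the proof must get right; everything else is a routine scaling and Bramble--Hilbert-style argument.
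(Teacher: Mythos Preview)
The paper does not actually prove this lemma: it cites Schwab's book for \eqref{ltwobounis}--\eqref{dxbounis} and Georgoulis--Houston for \eqref{htwo_bound}, with \eqref{hthree_bound} said to follow ``along the same lines''. Your sketch is broadly the argument found in those references --- reduce to $\hat I$, use $(\hat\lambda_p\hat u)'=\hat\pi_{p-1}\hat u'$, invoke $L^2$-projection estimates and inverse inequalities, then scale back --- so the overall strategy is correct and matches the cited sources.

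There is, however, a genuine gap in your argument for \eqref{ltwobounis}. The Poincar\'e step gives only $\|\hat u-\hat\lambda_p\hat u\|_{\hat I}\le C\|\hat u'-\hat\pi_{p-1}\hat u'\|_{\hat I}$ with a constant \emph{independent of $p$}, and the standard $L^2$-projection bound is $\|\hat w-\hat\pi_{p-1}\hat w\|_{\hat I}\le C\Phi(p,s)|\hat w|_{s,\hat I}$ with \emph{no} extra $p^{-1}$ (take $\hat w=L_p$ and $s=0$ to see this is sharp). Composing the two therefore yields only $\|\hat u-\hat\lambda_p\hat u\|_{\hat I}\le C\Phi(p,s)|\hat u|_{s+1,\hat I}$, missing the claimed $p^{-1}$. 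The correct mechanism (this is how Schwab's book does it) bypasses Poincar\'e altogether: write the error explicitly as the antiderivative of the Legendre tail,
\[
\hat u-\hat\lambda_p\hat u=\sum_{n\ge p}c_n\int_{-1}^{x} L_n(t)\,dt
=\sum_{n\ge p}\frac{c_n}{2n+1}\bigl(L_{n+1}-L_{n-1}\bigr),
\]
using the identity $\int_{-1}^{x}L_n=\tfrac{1}{2n+1}(L_{n+1}-L_{n-1})$. The weight $\tfrac{1}{2n+1}\le\tfrac{1}{2p+1}$ introduced by integration is precisely what produces the extra $p^{-1}$; it cannot be recovered from a generic Poincar\'e-plus-projection argument.

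Your caution about the factors $p^{3/2}$ and $p^{7/2}$ is well placed. The crude inverse inequality $\|q'\|_{\hat I}\le Cp^2\|q\|_{\hat I}$ is indeed too lossy by itself; the sharper powers in \eqref{htwo_bound}--\eqref{hthree_bound} arise by combining the inverse estimate with a simultaneous-approximation argument (insert an intermediate polynomial $q$ for which both $\|w-q\|$ and $\|w'-q'\|$ are controlled, then apply the inverse inequality only to the polynomial difference $\hat\pi_{p-1}w-q$, whose $L^2$-norm is already small). The Georgoulis--Houston paper carries this out explicitly and is the template to follow for the details.
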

\begin{proof}
 The proof 
of (\ref{ltwobounis}) and (\ref{dxbounis}) for the $H^1$-projection $\lambda_p^i$ can be found, e.g., in \cite{SchwabBook}. 
%The proof of
%(\ref{l2boundary2stais}) follows from (\ref{ltwobounis}), (\ref{dxbounis}) along with the standard trace
%inequality. 
%A different proof of (\ref{l2boundary2stais}) and the proof of (\ref{ioddxstais})
%can be found in \cite{georgoulis-suli:augmented}. 
The proof of \eqref{htwo_bound} can be found in \cite{GH}, while the proof of \eqref{hthree_bound} follows along the same lines as in the proof of  \eqref{htwo_bound} from \cite{GH}.
\end{proof}

These $hp$-approximation estimates imply the following bound.
 
\begin{theorem}\label{approx_theorem_dg_norms}
Let $u|_{K\times I_n} \in H^{k+1}(K\times I_n)$, for $k\ge 3$ be the exact solution to \eqref{eq:wave}. Then, \our{for space-time meshes satisfying Assumption \ref{as:st_reg},} the following error
bounds hold:
\begin{equation}\label{eq:aprioribound_hp}
\dgnorm{U-u}^2 \leq \reply{C}  p^{3} \Phi^2 (p,s)\sum_{n = 0}^{N-1} \sum_{K\in \mathcal{T}^n} \our{ \diam(K\times I_n)}^{2s-1} |u|_{s+1,K\times I_n}^2,
\end{equation}
for $3\le s\le \min\reply{\{}p+1,k\reply{\}}$ and $h=\max_{K,n}\{ \our{ \diam(K\times I_n)}\}$, with $C>0$ constant, independent of $p$, $h$, \reply{$u$, and of} $U$. Moreover, if $u$ is analytic on a neighbourhood of $\Omega$, there exists $r>0$, depending on the analyticity region of $u$ \reply{in a neighbourhood of $\Omega\times(0,T)$}, such that
\begin{equation}\label{eq:aprioribound_hp_exponential}
\dgnorm{U-u}^2 \leq  C(u) p^{3} \exp(-r p)\sum_{n = 0}^{N-1} \sum_{K\in \mathcal{T}^n}  |K\times I_n|  \our{\diam(K\times I_n)}^{2s-1}.
\end{equation} 
\end{theorem}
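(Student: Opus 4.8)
The plan is to combine the abstract error bound of Lemma~\ref{lemma:abstract_error_bound} with the $hp$-approximation properties of the Trefftz projection $\Pi_p$ defined in \eqref{trefftz_proj}. First I would choose $v_h = \Pi_p u \in \TspaceT$ in Lemma~\ref{lemma:abstract_error_bound}, so that $\eta = u - \Pi_p u$. Since the exact solution decomposes as $u|_{K\times I_n} = F^1_{n,K}(a^{-1/2}x+t) + F^2_{n,K}(a^{-1/2}x-t)$ and $\Pi_p u$ applies $\lambda^1_p$, $\lambda^2_p$ to the two characteristic variables separately, every term $\|D^\beta \eta\|_{K\times I_n}$ (with $|\beta|\le 3$, involving combinations of $\partial_x$ and $\partial_t$) can be bounded by the one-dimensional estimates \eqref{ltwobounis}--\eqref{hthree_bound} applied to $F^i_{n,K}$ on the intervals $J^i_{n,K}$, after rewriting spatial and temporal derivatives of $\eta$ in terms of derivatives of $F^1, F^2$ along the characteristic directions (using the chain rule and $\partial_t = \partial_{\xi^1} - \partial_{\xi^2}$, $\partial_x = a^{-1/2}(\partial_{\xi^1}+\partial_{\xi^2})$). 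The Sobolev regularity $u|_{K\times I_n}\in H^{k+1}$ transfers to $F^i_{n,K}\in H^{k+1}(J^i_{n,K})$, and space-time shape regularity (Assumption~\ref{as:st_reg}) gives $|J^i_{n,K}|\sim \diam(K\times I_n)\sim h_K^{(n)}\sim \tau_n$, so the various $\max\{1,\tau_n^2/h_K^2\}$ and $\max\{1,h_K^2/\tau_n^2\}$ factors in \eqref{eq:abstract_error_bound} are all bounded by a constant.

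Next I would insert these estimates term by term into \eqref{eq:abstract_error_bound}. Each of the six groups of terms there scales, after using $h_K\sim\tau_n\sim \diam(K\times I_n)=:h_{K,n}$, like $C_a$ times a power of $p$ times $h_{K,n}^{-1}$ times $\|D^\beta\eta\|^2_{K\times I_n}$ with $|\beta|$ chosen so the net $h$-power is $2s-1$; collecting the $p$-powers from \eqref{eq:abstract_error_bound} together with those in $\Phi(p,s)$-type bounds \eqref{ltwobounis}--\eqref{hthree_bound}, the worst contribution (coming from the $\|D^2\dot\eta\|$ and $\|D^2\eta\|$ terms, which invoke \eqref{htwo_bound}--\eqref{hthree_bound} with their $p^{3/2}$ and $p^{7/2}$ factors) yields the overall power $p^3$. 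This gives \eqref{eq:aprioribound_hp} with the stated range $3\le s\le\min\{p+1,k\}$.

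For the exponential-convergence statement \eqref{eq:aprioribound_hp_exponential}, I would use analyticity of $u$ on a neighbourhood of $\Omega\times(0,T)$: this makes each $F^i_{n,K}$ analytic on a complex neighbourhood of $J^i_{n,K}$, so $|F^i_{n,K}|_{s+1,J^i_{n,K}}\le C(u)\,\gamma^{s+1}(s+1)!$ for some $\gamma$ depending on the analyticity radius. Substituting $s$ proportional to $p$ into \eqref{eq:aprioribound_hp}, bounding $\Phi(p,s)\le Cp^{-s}$ via Stirling as already noted in the excerpt, and optimizing the factor $(\gamma h_{K,n})^{2s}\Phi^2(p,s)(s+1)!^2 \sim (C h_{K,n} p^{-1})^{2s}\cdot(\text{polynomial in }p)$ over $s\le p$ produces a bound of the form $C(u)p^3\exp(-rp)$ times $\sum |K\times I_n|\,h_{K,n}^{2s-1}$, where $r>0$ depends on the size of the analyticity region; one must be slightly careful to track the extra $|K\times I_n|$ factor appearing because the seminorm bound is multiplied by the volume when one does not normalise.

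The main obstacle I anticipate is the bookkeeping of derivatives in step one: $\eta$ is a function of $(x,t)$, whereas the sharp $hp$-estimates are stated for the univariate pieces $F^i_{n,K}$ on $J^i_{n,K}$, so one has to verify carefully that mixed space-time derivatives $D^\beta\eta$ with $|\beta|\le 3$ are controlled by $\|(\lambda^i_p F^i - F^i)^{(j)}\|_{J^i_{n,K}}$ for $j\le 3$ — i.e. that no derivative of order higher than three, and in particular no derivative of $\Pi_p u$ beyond what the projection commutes with, is ever needed — and that the Jacobian of the characteristic change of variables, which is bounded above and below by constants times powers of $a$ and $h_{K,n}/h_{K,n}$, does not disturb the $h$-scaling. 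Once this correspondence is pinned down, the rest is the routine (if tedious) tracking of powers of $p$ and $h$ described above.
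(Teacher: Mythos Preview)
Your approach is essentially the paper's: choose $v_h=\Pi_p u$ in Lemma~\ref{lemma:abstract_error_bound}, convert all derivatives of $\eta$ to characteristic-direction derivatives of $F^i_{n,K}-\lambda_p^i F^i_{n,K}$, apply the univariate bounds \eqref{ltwobounis}--\eqref{hthree_bound}, and use Assumption~\ref{as:st_reg} to collapse $h_K\sim\tau_n\sim\diam(K\times I_n)$. The identification of $p^3$ as the dominant $p$-power (coming from the second- and third-derivative terms via \eqref{htwo_bound}--\eqref{hthree_bound}) is correct.

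One point in your exponential-convergence argument needs tightening. You invoke the simplified bound $\Phi(p,s)\le Cp^{-s}$ ``as already noted in the excerpt'', but that constant $C$ depends on $s$; once you set $s=\gamma p$ it is no longer uniform and the argument as written does not close. The paper instead applies Stirling's formula directly to $\Phi^2(p,\gamma p)=\Gamma((1-\gamma)p+1)/\Gamma((1+\gamma)p+1)$ and to the analyticity estimate $|u|_{s,K\times I_n}\le C d^{s}\Gamma(s+1)|K\times I_n|^{1/2}$, obtaining an explicit base
\[
F(\gamma)=(2\gamma d)^{2\gamma}\frac{(1-\gamma)^{1-\gamma}}{(1+\gamma)^{1+\gamma}},
\]
which is then minimised at $\gamma_{\min}=(1+4d^2)^{-1/2}$ to produce $F(\gamma_{\min})<1$ and hence $r=\tfrac12|\log F(\gamma_{\min})|>0$. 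Your outline has the right shape (Stirling plus optimisation over $s/p$), but you should carry out this computation with the full $\Phi$ rather than the $s$-dependent-constant shortcut.
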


\begin{proof}
The proof of \eqref{eq:aprioribound_hp} follows by combining the $hp$-approximation bounds from \eqref{approx_theorem} with Lemma \ref{lemma:abstract_error_bound}.

For \eqref{eq:aprioribound_hp_exponential}, we work as follows. Analyticity of $u$ implies that there exists a $d>0$, \reply{such that for all $s\ge 0$,}
\begin{equation}\label{analyticity}
|u|_{s,K\times I_n} \le C d^s\Gamma(s+1)|K\times I_n|^{1/2},
\end{equation}
\reply{cf., e.g., \cite[Theorem 1.9.3]{davis}}.
Using this \eqref{analyticity}, setting $s=\reply{\gamma} p$ for some $0<\reply{\gamma}<1$, along with Stirling's formula, we can arrive to the bound
\[
\Phi^2 (p,\gamma p) |u|_{\gamma p+1,K\times I_n}^2 \le C \bigg( (2\gamma d)^{2\gamma}\frac{(1-\gamma)^{1-\gamma}}{(1+\gamma)^{1+\gamma}} \bigg)^p|K\times I_n|,
\]
with the precise choice of $\gamma$ remaining at our disposal. The function 
\[
F(\gamma) := (2\gamma d)^{2\gamma}\frac{(1-\gamma)^{1-\gamma}}{(1+\gamma)^{1+\gamma}} ,
\]
has a minimum at $\gamma_{\min}:=(1+4d^2)^{-1/2}$, giving $F(\gamma_{\min})<1$. Setting, now $r = 1/2|\log F(\gamma_{\min})|$, the result follows.
\end{proof}

\begin{remark}
The bound \eqref{eq:aprioribound_hp} is suboptimal in $p$ by one order. This is a standard feature of $hp$-version dG methods whose analysis requires the use of $hp$-type inverse estimates. It is possible to slightly improve on this result and obtaining only $1/2$ order $p$-suboptimal bounds, using the classical $hp$-approximation results from \cite{Can_Quar,BS}, instead of the $H^1$-projection operator as done above. These results, however, are not suitable for the proof of the exponential rate of $p$-convergence. 
\end{remark}

\section{Numerical experiments}\label{sec:numerics} \reply{We present a series of numerical experiments aiming to highlight the performance of the proposed method above. In each experiment, the spatial meshes are kept fixed $\mathcal{T} = \mathcal{T}_n$ and uniform time-step is used. In the one-dimensional ($d=1$) examples the spatial mesh is a uniform set of intervals, whereas for $d=2$, the spatial mesh is a quasiuniform triangulation. The resulting linear systems at each timestep are solved by standard sparse direct solvers.}
\subsection{Experiments in one dimension}
We consider the wave equation with constant diffusion coefficient $a\equiv 1$, spatial domain $\Omega = (0,1)$  and initial data 

\begin{equation}
  \label{eq:init_data}  
u(x,0) = e^{-\left(\frac{x-5/8}{\delta}\right)^2}, \qquad \dot u(x,0) = 0,
\end{equation}
where $\delta \leq \delta_0 = 7.5 \times 10^{-2}$. Note that the initial data is not exactly zero at the boundary, but is less than $10^{-11}$ in the range of parameter $\delta$ that we consider. This slight incompatibility with the boundary condition does not influence in any visible way our numerical results. Since the energy of the exact solution stays constant it is given for all times by
\[
\text{exact energy} = \tfrac12\ltwo{u_x(x,0)}{\Omega}^2 \approx 2 \delta^{-1}\int_{-\infty}^\infty y^2 e^{-2y^2}dy = \delta^{-1} \frac{\sqrt{\pi}}{2\sqrt{2}},
\]
where the approximation in the second step is of the order of $10^{-11}$ for reasons given above and the final equality is obtained by using integration by parts to reduce it to the Gaussian integral \cite{GraR2000}.

As the problem is in one spatial dimension, the exact solution is not difficult to obtain. The error will be computed in the dG norm 
\[
\text{error } = \dgnorm{u- u_h}.
\]
Since the exact solution is smooth, note that, see \eqref{eq:dg_norm_expr}, 
\[
\dgnorm{u}^2 = a(u,u) = 2 \times \text{exact energy}.
\]

\subsubsection{Convergence order}

We first investigate the convergence order of the numerical method. Though we did not analysed full polynomial spaces,  we also give numerical experiments for these as it is interesting to compare the two sets of results.

In this subsection, we choose $\delta_0 = 7.5 \times 10^{-2}$ and $T = 1/4$. Note that we choose such a small time interval in order to reach the asymptotic regime earlier -- this will especially be important for lower orders. In Figure~\ref{fig:conv} and Tables~\ref{tab:conv_order} and \ref{tab:linear_order}, the convergence curves and numerically computed convergence orders are given. These confirm the theoretical results. Note that the errors obtained by the full and the Trefftz spaces are very similar for the same order, but the Trefftz spaces require fewer degrees of freedom and cheaper implementation; see Remark~\ref{rem:cheaper} and Figure~\ref{fig:Conv_Time}. We have also found that higher order approximation converges without the two extra stabilisation terms, i.e., with $\sigma_1 = \sigma_2 = 0$, but with the piecewise linear functions it stagnates.

\begin{figure}[hbtp]
 \centering
\includegraphics[scale=0.45]{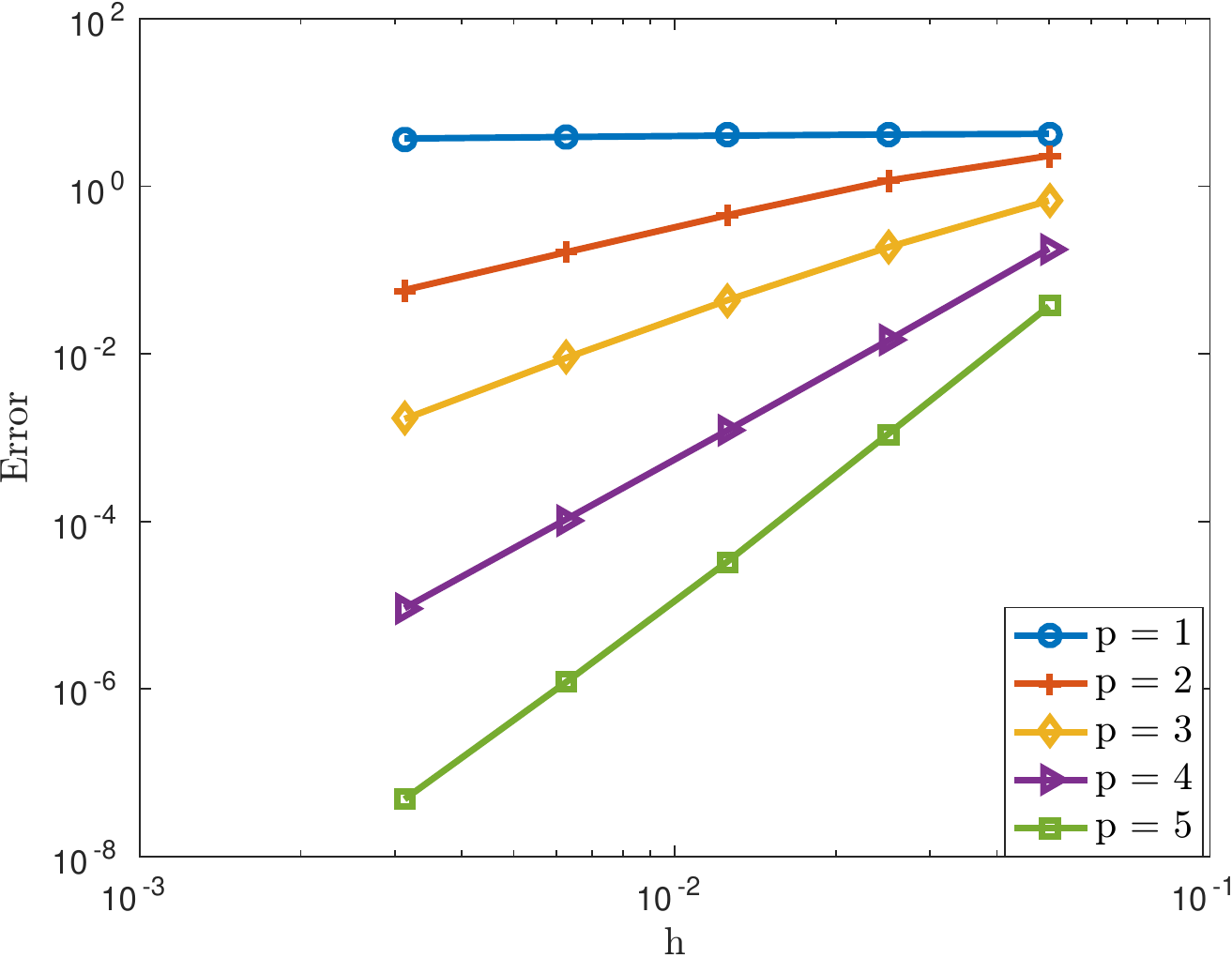}  
\includegraphics[scale=.45]{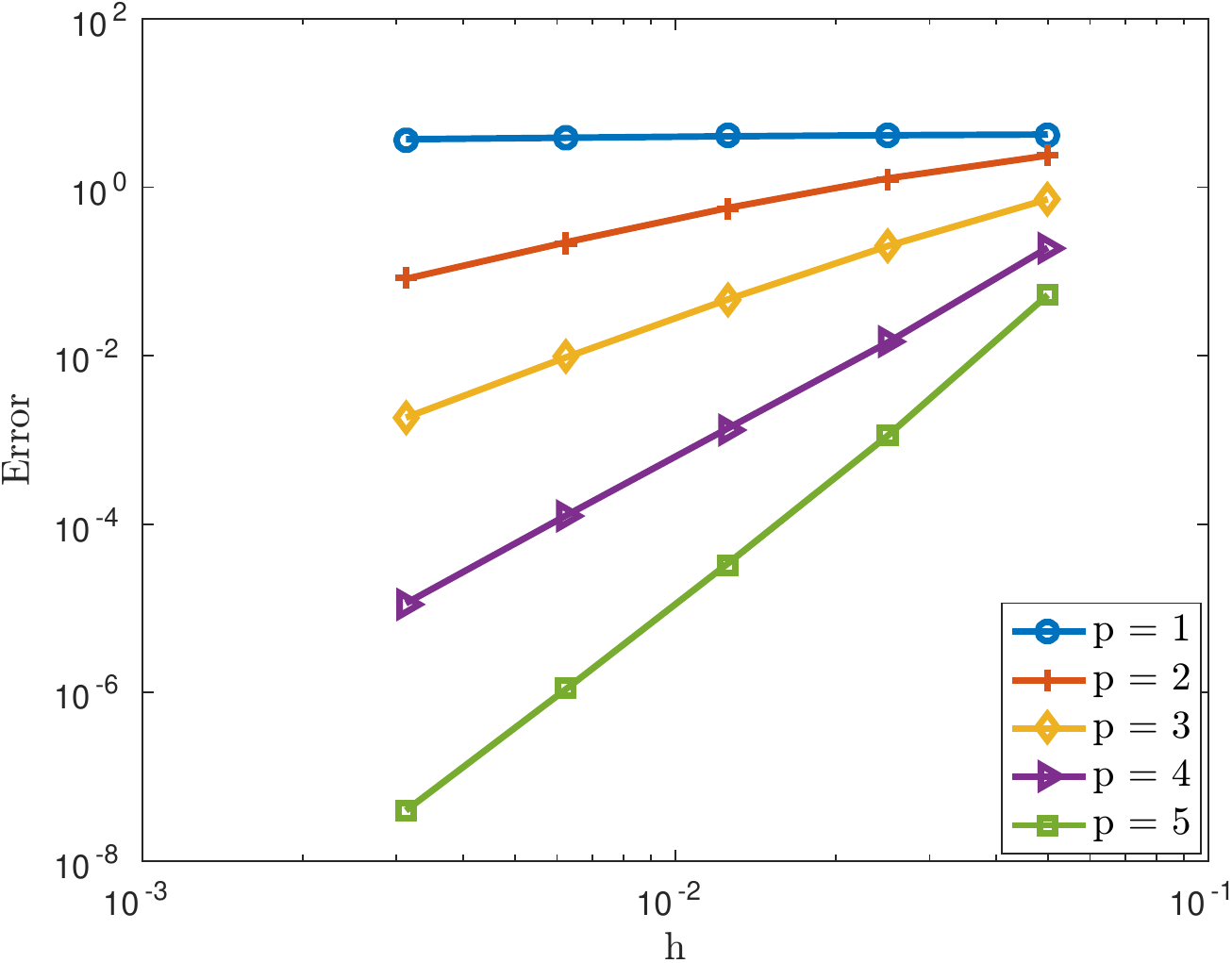} 
\caption{\small Convergence of the Trefftz, on the left, and polynomial, on the right, time-space dG method of order $p$. The error is plotted  against the uniform mesh width in time and space $h = T/N$. }
 \label{fig:conv}
\end{figure}
%%%% Time up to oder 5
\begin{figure}[hbtp]
\centering
\includegraphics[scale=0.450]{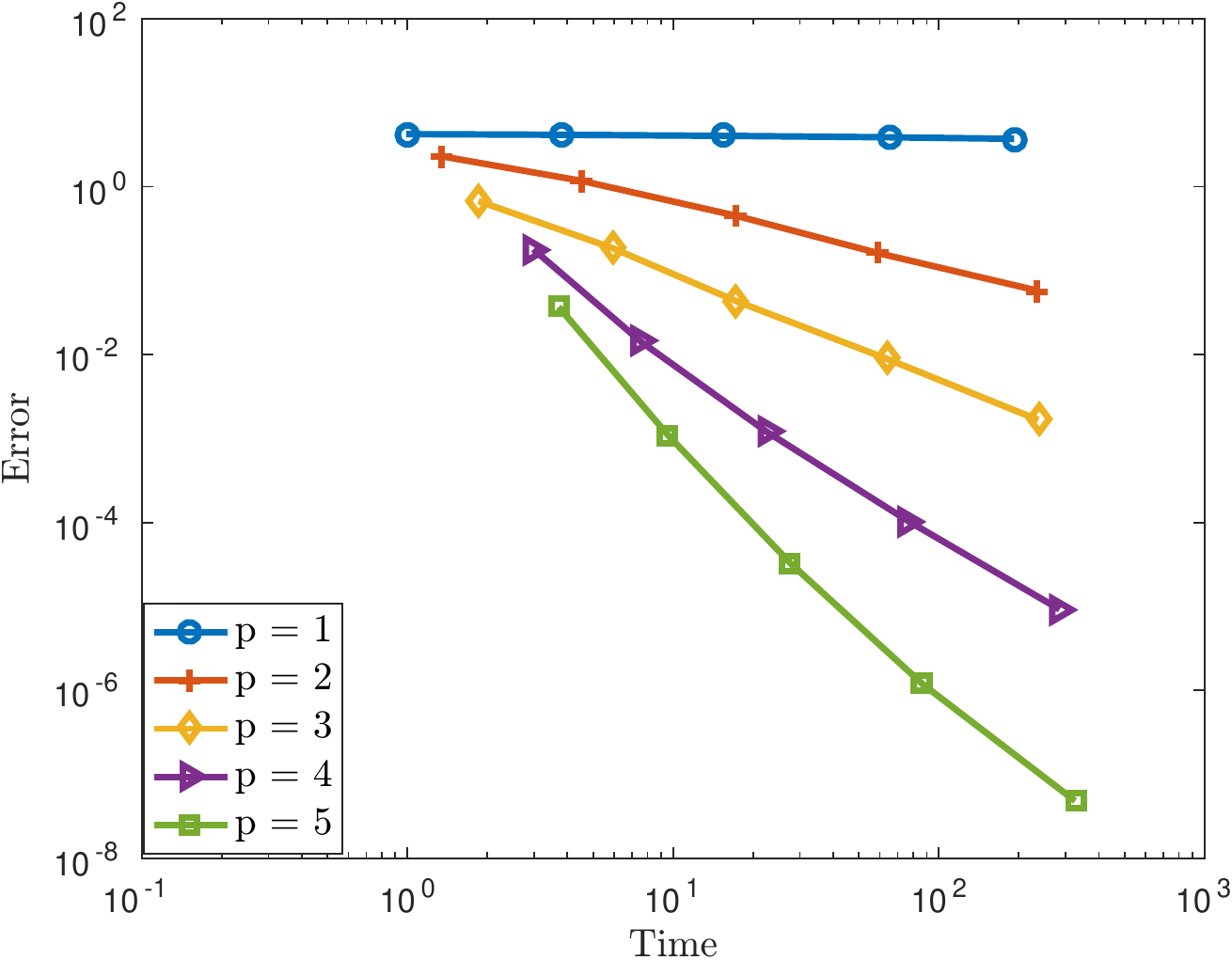}
\includegraphics[scale=0.450]{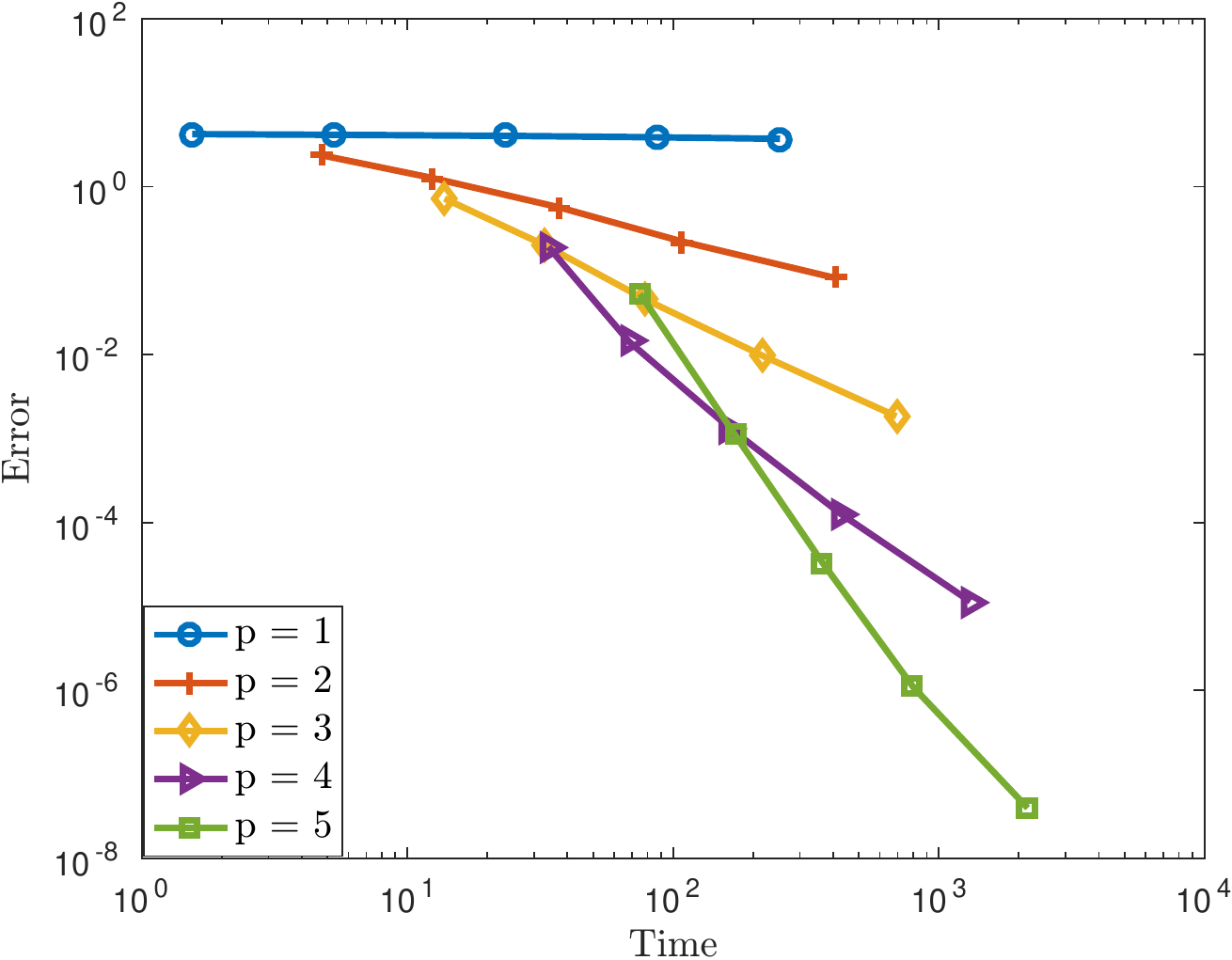} 
\caption{\small Error against computational time for Trefftz (on the left) and polynomial spaces (on the right). The much lower times for Trefftz spaces are due both to the smaller number of degrees of freedom for the same accuracy and to the cheaper construction of matrices; see Remark~\ref{rem:cheaper}.}
\label{fig:Conv_Time}
\end{figure}

%%%%%%%%%%%%%%%%%%%%%%%%%%%%%%%%%%%%%%%%%%%%%

\begin{figure}[hbtp]
\centering
\includegraphics[scale=.4]{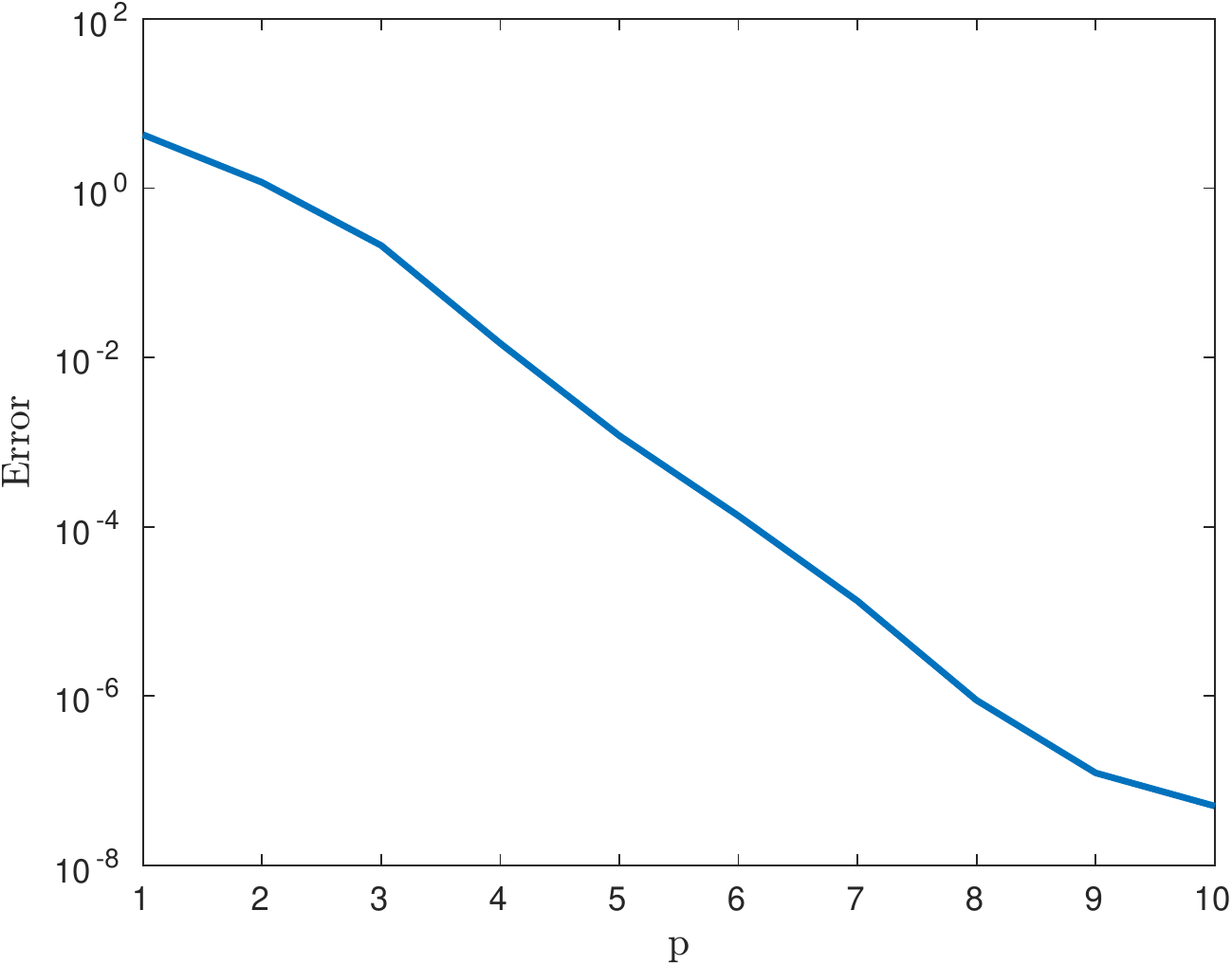} 
\caption{\small Convergence of the Trefftz method with fixed mesh width $h=1/40$ and increasing polynomial order $p$.}
\label{fig:Convp}
\end{figure}

%%%%%%%%%%%%%%%%%%%%%%%%%%%%%%%%%%%%%%%%%

\begin{table}
  \centering
   \begin{tabular}{c|cccc}
  $N$  & $p=2$ & $p=3$ & $p=4$  & $p=5$\\\hline
$5$ & $0.98$ & $1.85$ & $3.64$  & $5.07$\\
$10$ & $1.37$ & $2.10$ & $3.57$ & $5.06$\\
$20$ & $1.38$ & $2.28$ & $3.52$ & $4.77$\\
$40$ & $1.46$ & $2.42$ & $3.51$ & $4.76$\\
$80$ & $1.49$ & $2.51$ & $3.51$ & $4.63$ 
 \end{tabular}
 \hspace{.8cm}
  \begin{tabular}{c|cccc}
$N$  & $p = 2$& $p = 3$ & $p=4$ & $p=5$\\\hline
5 & 0.90 & 1.85  & 3.74 & 5.56 \\
10 & 1.17 & 2.11 & 3.39 & 5.05\\
20 & 1.34 & 2.26 & 3.41 & 4.31\\
40 & 1.44 & 2.38 & 3.41 & 4.91\\
80 & 1.45 & 2.54 & 3.46 & 4.79
\end{tabular}
  \caption{\small Numerically obtained orders of convergence of the error in the dG norm $  \dgnorm{\cdot}$  for Trefftz spaces on the left and for polynomial space on the right}
\label{tab:conv_order}

\end{table}
%%%%%%%%%%%%%%%%%%%%%%%%%%%%%%%%%%%%%%%%%%%%

\begin{table}   % \begin{tabular}{ | l | l | l | p{5cm} |}
\centering
\begin{tabular}{| l | l | l | l | l | l | l | l | l |}
\hline
$N$ &  80 & 160 & 320 & 640 & 1280 & 2560 & 5120   \\ \hline
$p = 1$     &  0.08 & 0.11 & 0.19 & 0.29 & 0.38 & 0.44 & 0.47 \\ \hline
\end{tabular}
\caption{\small Numerically obtained orders of convergence of the error in the dG norm $\dgnorm{\cdot}$ for linear elements.}\label{tab:linear_order}
\end{table}
\subsubsection{Long-time energy behaviour}

The time-space dG method that we developed is dissipative, so we expect the energy to decay over time. However, if the accuracy of the approximation is high we expect this decay to be very slow. This is indeed reflected in the numerical experiments shown in Figure~\ref{fig:energy_decaylog}, where we compute the energy
\[
E(t) = \tfrac12\|\dot u_h (t)\|^2+\tfrac12\|\nabla u_h (t)\|^2
\]
up-to time $T = 5$ with $\delta = \delta_0/4$.

\begin{figure}[hbtp]
\centering
\includegraphics[scale=0.45]{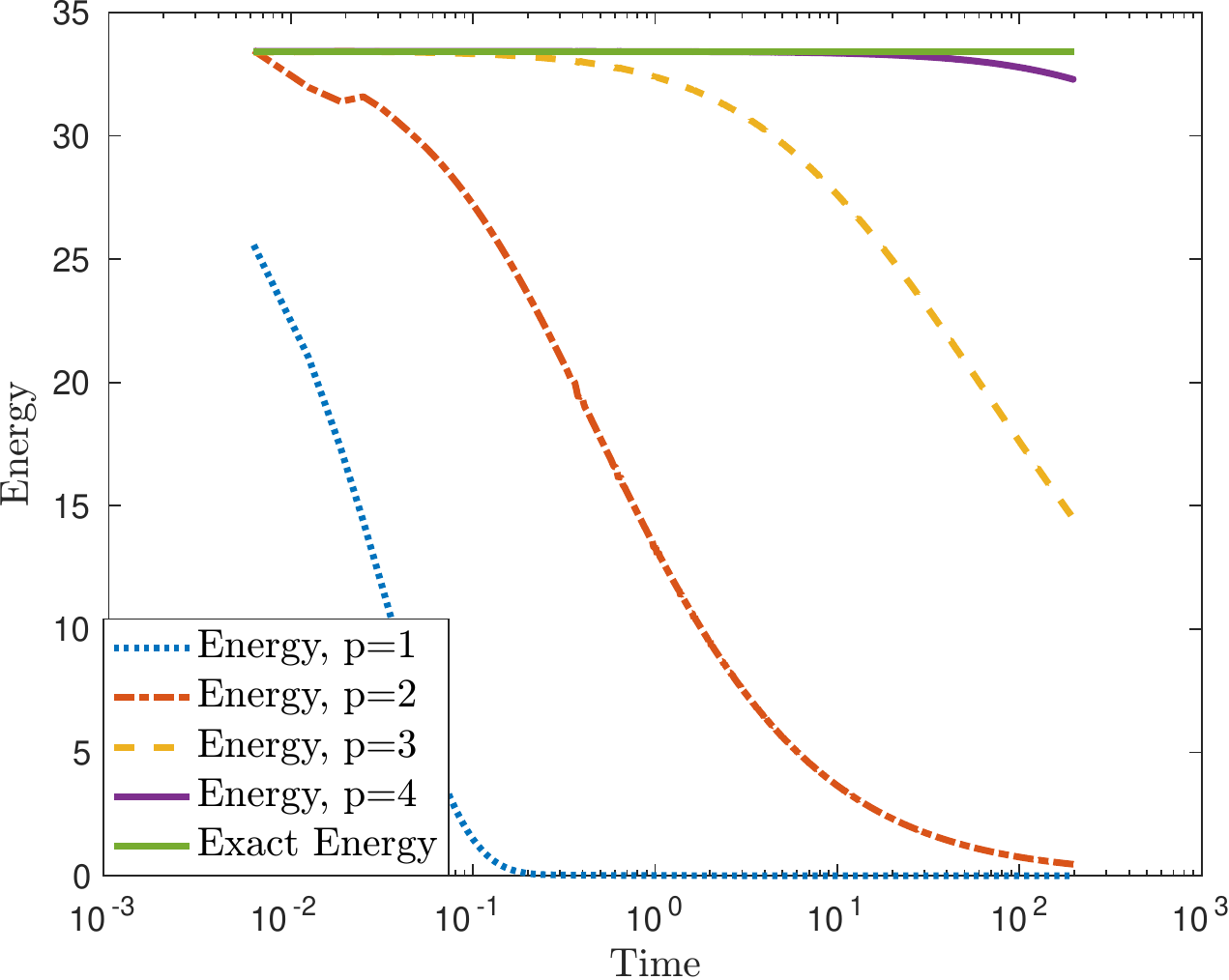}
\includegraphics[scale=0.45]{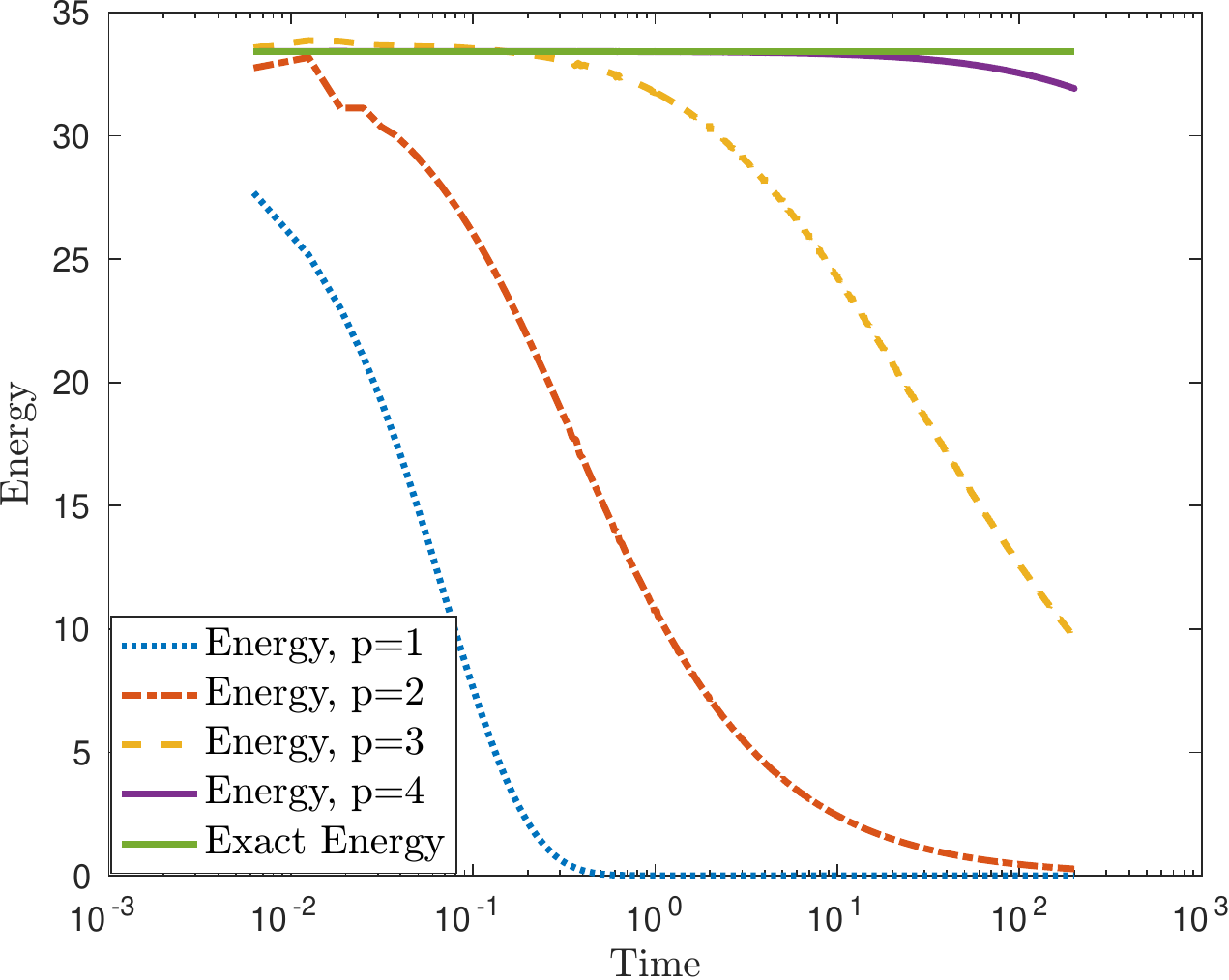}
\caption{\small Plot of the energy $E(t) = \tfrac12\|\dot u_h (t)\|^2+\tfrac12\|\nabla u_h (t)\|^2$ against time for the Trefftz spaces on the left and polynomial spaces on the right.}% Note that in the left picture, the line corresponding to $p = 4$ is not visible as it is covered by the line for the exact energy}.%produced by Energy_LongTime.m
\label{fig:energy_decaylog}
\end{figure}

\subsubsection{Waves with energy at high-frequences}

Note  that if we decrease the parameter $\delta > 0$ in the definition of the initial data \eqref{eq:init_data}, the Gaussian becomes narrower and energy at higher frequences is excited. In the following set of experiments we investigate the error while decreasing both $\delta > 0$ and  the mesh-width $h > 0$. In an ideal scenario, $h \propto \delta$ would be sufficient to obtain a constant relative error which we define as
\begin{equation}
  \label{eq:error_delta}  
\text{error}_\delta  = \left(\frac\delta2\ltwo{\dot u(\cdot,T)-\dot u_h(\cdot,T^-)}{\Omega}^2+\frac\delta2\ltwo{\nabla u(\cdot,T)-\nabla u_h(\cdot,T^-)}{\Omega}^2\right)^{1/2}.
\end{equation}
The results given in Figure~\ref{fig:error_delta} show that already for $p = 3$ the performance is good and for larger $p$ for this range of $\delta$, the error does not visibly change for different $\delta$ as long as $h \propto \delta$.

\begin{figure}[hbtp]
\centering
\includegraphics[scale=0.45]{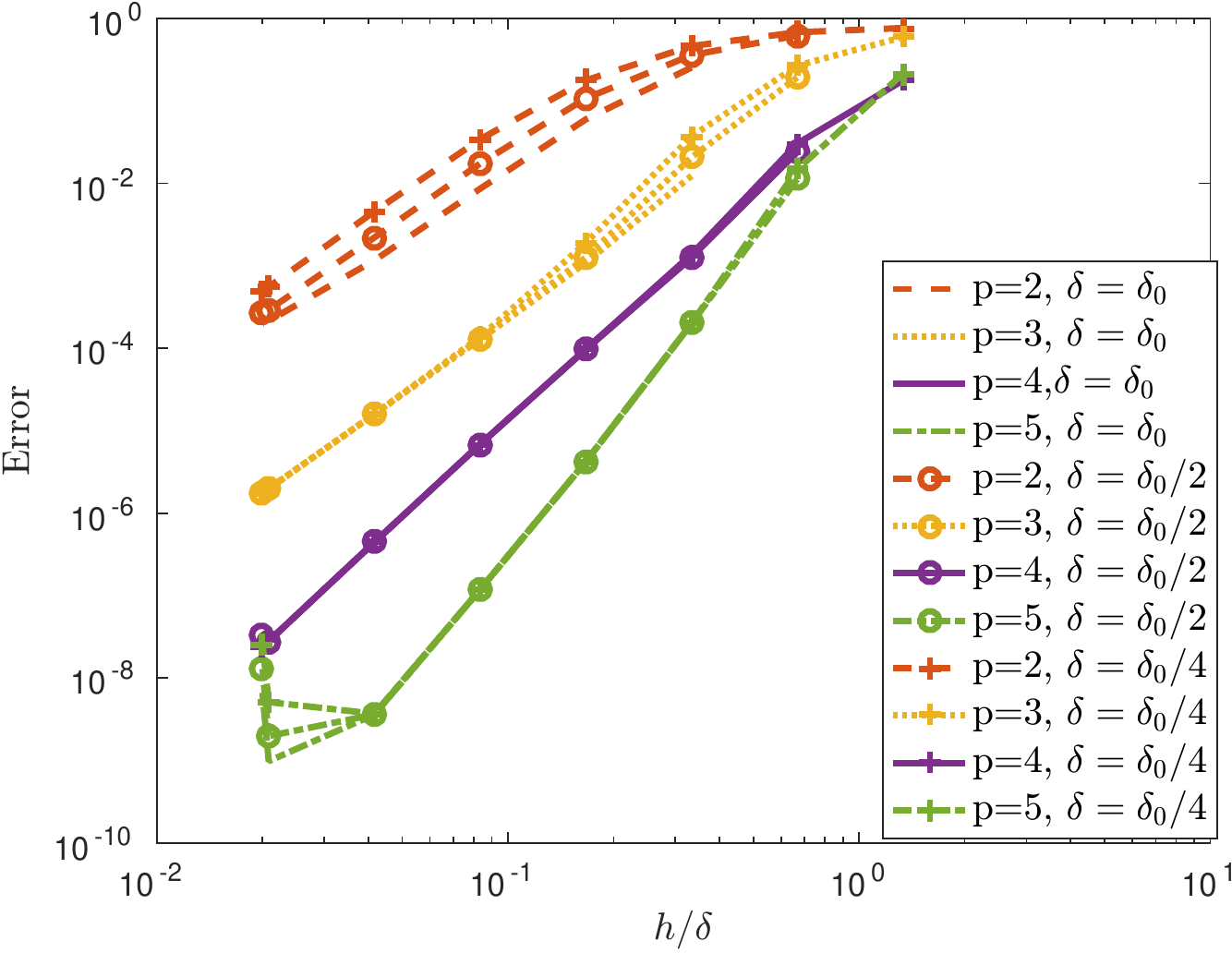} 
\caption{\small The plot of the scaled error see \eqref{eq:error_delta}, against $h/\delta$, for Trefftz space.The final time is chosen to be $T=1$. %and $K_{max}=\text{round}(1/(2\times 10^{-2}*\delta))$
\label{fig:error_delta}}
\end{figure}

\subsection{Experiments in two dimensions}

We conclude the section on numerical experiments by considering the wave equation 
\[
\ddot u -\Delta u = 0
\]
on the square $[0,1]^2 \subset \mathbb{R}^2$ with homogeneous Dirichlet boundary condition and initial data
\[
u(x,y,0) =  \sin \pi x \sin \pi y, \qquad \dot u(\cdot, 0) = 0.
\]
The analytical solution is given by
\[
u(x,y,t) = \cos(\sqrt{2}\pi t) \sin \pi x \sin \pi y.
\]
We investigate the convergence of the error in the energy norm at the final time-step
\begin{equation}
  \label{eq:2d_error}
\text{error}  = \left(\tfrac12\ltwo{\dot u(\cdot,T)-\dot u_h(\cdot,T^-)}{\Omega}^2+\tfrac12\ltwo{\nabla u(\cdot,T)-\nabla u_h(\cdot,T^-)}{\Omega}^2\right)^{1/2}.  
\end{equation}
The convergence plots are given in Figure~\ref{fig:2d_energyconv} and the computed convergence orders in Table~\ref{tab:2d_convorder}. Note that, \reply{for the weaker error notion \eqref{eq:2d_error},} we do not lose half an order of convergence as when computing the error in the discrete norm $\dgnorm{\cdot}$. The theory presented above, does not predict this behaviour. The observed rate of convergence is not, however, surprising as, unlike the dG norm, this error measure does not accumulate the errors over all time-steps.

\begin{figure}
  \centering
\includegraphics[width=.4\textwidth]{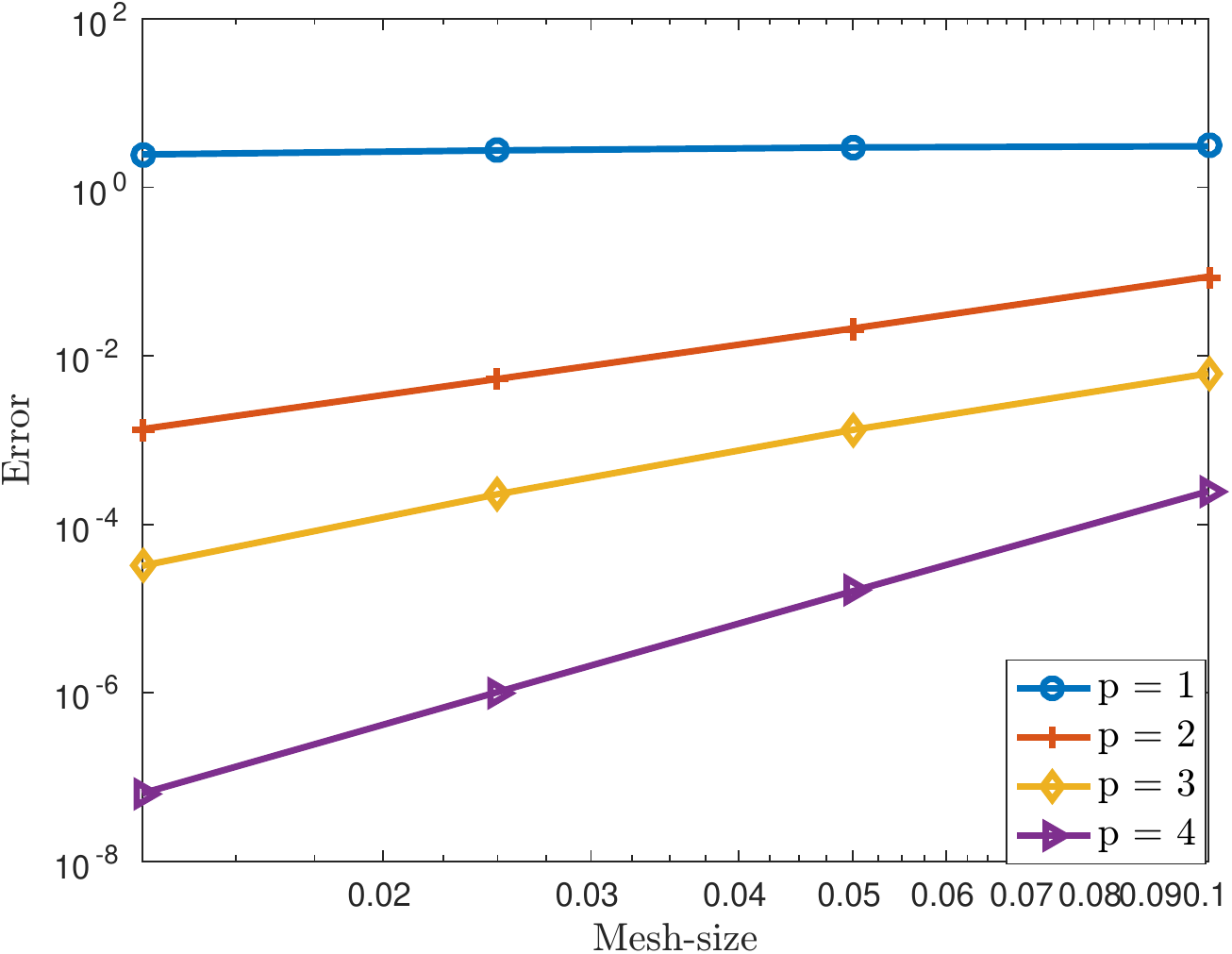}
\includegraphics[width=.4\textwidth]{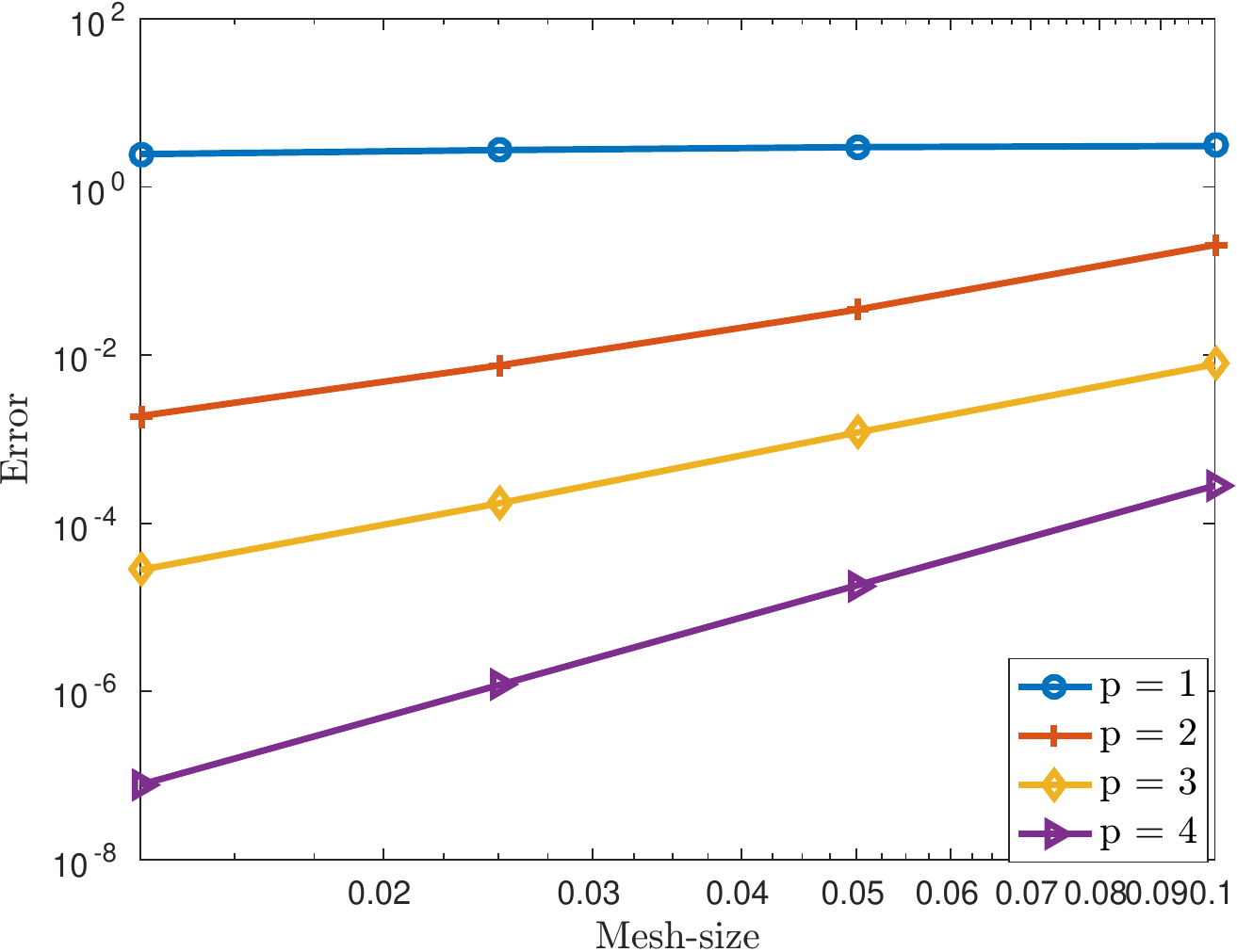}  
  \caption{\small Convergence of the error \eqref{eq:2d_error} at the final time for Trefftz basis on the left and full polynomial basis on the right.}
  \label{fig:2d_energyconv}
\end{figure}

\begin{table}[hbtp]
  \centering
   \begin{tabular}{c|cccc}
  $N$ & $p=1$ & $p=2$ & $p=3$ & $p=4$\\\hline
$10$ &$0.046$ & $2.039$ & $2.219$ & $3.935$ \\
$20$ &$0.115$ & $1.990$ & $2.539$ & $3.977$ \\
$40$ &$0.160$ & $1.979$ & $2.819$ & $3.996$ \\
 \end{tabular}
 \hspace{.8cm}
  \begin{tabular}{c|cccc}
$N$ & $p=1$ & $p = 2$& $p = 3$ & $p=4$ \\\hline
10 & 0.046 & 1.897 & 2.133 & 3.956 \\
20 & 0.115 &1.896 & 2.248 & 3.967\\
40 & 0.160 & 1.935 & 2.612 & 3.982 \\
\end{tabular}
\caption{\small Numerically obtained convergence orders of the error \eqref{eq:2d_error} at the final time for Trefftz  on the left and for the full polynomial space on the right. \label{tab:2d_convorder}}\vspace{-.5cm}
\end{table}
\FloatBarrier

%\bibliographystyle{abbrv}
%\bibliography{dgwave}

\def\cprime{$'$}

%%% Local Variables:
%%% TeX-master: "dgwave_v3"
%%% End:

\end{document}